\newtheorem{thm}{Theorem}[section]
\newtheorem{col}[thm]{Corollary}
\newtheorem{lem}[thm]{Lemma}
\newtheorem{prop}[thm]{Proposition}
\newtheorem{que}[thm]{Question}
\theoremstyle{definition}
\newtheorem{defn}[thm]{Definition}
\newtheorem{construction}[thm]{Construction}
\newtheorem{parameter}[thm]{Parameter}
\theoremstyle{remark}
\newtheorem{rem}[thm]{Remark}
\title[Virtual Domination of $3$-manifolds]{Virtual Domination of $3$-manifolds}
\author[Hongbin~Sun]{Hongbin Sun}
\address{%
    Mathematics Department\\
    Princeton University\\
    Princeton, NJ 08544, USA}
\email{%
    hongbins@math.princeton.edu}
\subjclass[2010]{57M10, 57M50, 30F40}
\date{}
\begin{document}

\begin{abstract} For any closed oriented hyperbolic $3$-manifold $M$, and any closed oriented $3$-manifold $N$, we will show that $M$ admits a finite cover $M'$, such that there exists a degree-$2$ map $f:M'\rightarrow N$, i.e. $M$ virtually $2$-dominates $N$.
\end{abstract}

\maketitle


\section{Introduction}
\subsection{Background and the Main Result}

Traditionally, essential codimension-$1$ objects in $3$-manifolds, e.g. incompressible surfaces, taut foliations, essential laminations, are very important and interesting objects in $3$-manifold topology. There are various methods to construct such essential codimension-$1$ objects, and most of the constructions use topological methods.

Recently, in \cite{KM1}, Kahn and Markovic used hyperbolic geometry and dynamical system to show the following Surface Subgroup Theorem: for any closed hyperbolic $3$-manifold $M$, there exists a closed hyperbolic surface $S$, such that there is a $\pi_1$-injective almost totally geodesic immersion $ S\looparrowright M$. Building on Wise's work (\cite{Wi}), Agol showed that the groups of hyperbolic $3$-manifolds are virtually special and LERF (\cite{Ag}). Agol's result admits us to find a finite cover of $M$, such that $S$ lifts to an embedded incompressible surface, which solves Thurston's Virtual Haken Conjecture (\cite{Th2}). Actually, it is the Surface Subgroup Theorem that admits Wise's machine on geometric group theory available for studying closed hyperbolic $3$-manifolds.

In \cite{Su}, for any closed hyperbolic $3$-manifold $M$, we used Kahn-Markovic surfaces to construct an immersed $\pi_1$-injective $2$-complex $X_n\looparrowright M$. Here the $2$-complex $X_n$ is a local model of homological $\mathbb{Z}_n$-torsion. Then the results of Agol (\cite{Ag}) and Haglund-Wise (\cite{HW}) can be applied to $X_n\looparrowright M$, and we showed the following result: for any finite abelian group $A$, and any closed hyperbolic $3$-manifold $M$, $M$ admits a finite cover $M'$, such that $A$ is a direct summand of $Tor(H_1(M';\mathbb{Z}))$ (see \cite{Su}).

The proof of the above result suggests us to construct some other type of immersed $\pi_1$-injective $2$-complexes in closed hyperbolic $3$-manifolds. Then LERF or other virtual properties of hyperbolic $3$-manifolds will imply some other nice results. In this paper, we will give another application of this idea, and show the following result.

\begin{thm}\label{main}
For any closed oriented hyperbolic $3$-manifold $M$, and any closed oriented $3$-manifold $N$, $M$ admits a finite cover $M'$, such that there exists a degree-$2$ map $f:M'\rightarrow N$, i.e. $M$ virtually $2$-dominates $N$.
\end{thm}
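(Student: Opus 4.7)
The plan is to follow the schema outlined in the introduction: construct a $\pi_1$-injective immersed $2$-complex $X\looparrowright M$ whose combinatorics encode the data of a degree-$2$ map to $N$, promote $X$ to an embedding in a finite cover $M'\to M$ via the Agol--Wise machinery, and then read off the map $f:M'\to N$ from the components of $M'\smallsetminus X'$.

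To set up the local model, I would first fix a polyhedral decomposition of $N$ whose $2$-skeleton $Y\subset N$ has complement a disjoint union of open $3$-balls (say the dual $2$-skeleton of a triangulation, or the union of Heegaard surface and disk system of a Heegaard splitting). To target degree $2$ rather than degree $1$, I would then build an abstract $2$-complex $X_0$ combinatorially obtained by taking two copies of each $2$-stratum of $Y$ and gluing them along a controlled pattern of identification circles, so that the abstract closed $3$-manifold formed by attaching two copies of each ball of $N\smallsetminus Y$ to $X_0$ comes equipped with a natural degree-$2$ projection onto $N$. This local model $X_0$ plays a role analogous to the torsion model $X_n$ of \cite{Su}, but tailored to degree-$2$ domination.

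Next I would realize $X_0$ geometrically in $M$. Each $2$-stratum of $X_0$ is a compact surface with boundary; using the Kahn--Markovic good pants technology \cite{KM1} and its refinements, each such stratum can be realized as a nearly totally geodesic $\pi_1$-injective immersed subsurface of $M$ whose boundary lies close to a prescribed collection of closed geodesics. With sufficient uniformity in length, shear, and orientation, these subsurfaces can be matched and glued along their common boundary curves, yielding a $\pi_1$-injective immersion $X\looparrowright M$ of combinatorial type $X_0$. Agol's LERF theorem \cite{Ag} together with Haglund--Wise \cite{HW} then provides a finite cover $M'\to M$ in which $X$ lifts to an embedded $2$-complex $X'\subset M'$; by the design of $X_0$, the components of $M'\smallsetminus X'$ come in pairs, each identified with a ball of $N\smallsetminus Y$, and assembling the piecewise maps consistently across $X'$ produces the desired degree-$2$ map $f:M'\to N$.

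The main difficulty, I expect, is the compatibility demanded simultaneously by Steps 1 and 2: the model $X_0$ must be rigid enough that the assembled map is well defined and of degree exactly $2$, yet flexible enough to be realized by nearly totally geodesic pieces in $M$ whose lengths, twist parameters, and orientations can all be matched along the gluing curves. Overcoming this tension -- by a careful combinatorial choice of $X_0$ paired with a quantitative refinement of the good pants construction that allows one to prescribe the relevant gluing data -- should be the technical heart of the argument.
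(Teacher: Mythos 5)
Your high-level schema (local model $\to$ Kahn--Markovic realization $\to$ LERF $\to$ assemble the map) matches the paper, but the proposal misses the two ideas that make the argument actually close.

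First, the decomposition of $N$. You start from a generic triangulation or Heegaard splitting, but the paper instead uses the Hilden--Lozano--Montesinos--Whitten universality of Thurston's Borromean-rings orbifold $M_0$: every closed oriented $N$ is the underlying manifold of a finite orbifold cover of $M_0$, so $N$ inherits a handle structure \emph{pulled back} from a single fixed orbifold handle structure on $M_0$ whose combinatorics follow the regular dodecahedron (one $0$-handle, six $1$-handles, twelve $2$-handles, all vertices of valence $12$ with pairwise angles exactly $2\theta_0\approx 0.35\pi$). This uniform bound on local complexity is essential in the $\pi_1$-injectivity estimates of Section 4: the quasi-isometric embedding argument hinges on $2\theta_0>\pi/3$, so that at any vertex at most three consecutive short segments can occur in a modified geodesic; a wedge of $g$ circles from a high-genus Heegaard splitting puts $2g$ unit vectors at one point with uncontrollably small angles, and the argument does not go through. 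Nothing in your proposal supplies a substitute for this control.

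Second, the reason for degree $2$. You declare the doubling as a design choice but never identify the obstruction that forces it. The paper's key input is Liu--Markovic's $\mathbb{Z}_2$-valued invariant $\sigma$ (Corollary \ref{bound}): a null-homologous $(R,\epsilon)$-multicurve $L$ bounds an immersed $(R,\epsilon)$-almost-totally-geodesic subsurface iff $\sigma(L)=0$, and for a single attaching geodesic $\gamma_i$ one can have $\sigma(\gamma_i)=\bar 1$ (indeed this happens for the curves coming from the singular locus of $M_0$). Thus "realize each $2$-stratum as an immersed almost totally geodesic surface with prescribed boundary," as you propose, simply fails in general. What works is to form $L_i=\gamma_i\sqcup\gamma_i$, for which $\sigma(L_i)=0$ automatically, and realize it by a single surface $S_i$ with two boundary components attached to the two copies of the $1$-skeleton; in the assembled map this $S_i$ is pinched onto the core disk of the $i$-th $2$-handle with degree $2$. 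Absent this, your "gluing along controlled identification circles" step is not justified, and it is exactly the place where degree $1$ would otherwise seem to suffice. (Two smaller points: only LERF, not Haglund--Wise, is used in the embedding step; and the complement $M'\smallsetminus K$ is not a union of balls matching the $3$-handles --- the paper instead pinches arbitrary complementary pieces to wedges of $3$-balls, which is harmless only because $3$-handles have $S^2$ boundary.)
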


\begin{rem}
In a previous version of this paper, we used results in \cite{Ga}, and could only show that any closed oriented hyperbolic $3$-manifold virtually dominates any closed oriented $3$-manifold, but with no bound on the degree of the non-zero degree map. The author wants to thank Ian Agol for introducing him the results in \cite{HLMW}, which admits us to get the virtual $2$-domination result.
\end{rem}

Theorem \ref{main} answers a question asked by Agol: whether any closed hyperbolic $3$-manifold virtually dominates any closed $3$-manifold, which was a possible approach to prove the Virtual Haken Conjecture. In some sense, the existence of a nonzero degree map from one $3$-manifold $M$ to another $3$-manifold $N$ implies that $M$ is (topologically) more complicated than $N$. So Theorem \ref{main} implies that any closed hyperbolic $3$-manifold is virtually more complicated than any closed $3$-manifold. For more information about history, results and questions on non-zero degree maps between $3$-manifolds, see the survey paper \cite{Wa}.

Since closed hyperbolic $3$-manifolds have virtually positive first betti-number (\cite{Ag}), we can suppose that $M$ has already satisfied $b_1(M)>0$, then the following immediate corollary holds.

\begin{col}\label{even}
For any even number $2d$, any closed oriented hyperbolic $3$-manifold $M$, and any closed oriented $3$-manifold $N$, $M$ admits a finite cover $M''$, such that there exists a degree-$2d$ map $g:M''\rightarrow N$, i.e. $M$ virtually $2d$-dominates $N$.
\end{col}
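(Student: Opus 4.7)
The plan is to derive Corollary \ref{even} directly from Theorem \ref{main} by composing the virtual $2$-domination with a suitable cyclic cover of large degree. The key observation is that degrees of maps multiply under composition, and having $b_1(M)>0$ supplies an endless supply of cyclic covers of arbitrary degree to any finite cover of $M$.

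First, by the hypothesis (invoking Agol's result that closed hyperbolic $3$-manifolds have virtually positive first Betti number), I may assume without loss of generality that $b_1(M)>0$. Second, applying Theorem \ref{main} to the pair $(M,N)$ produces a finite cover $p \colon M' \to M$ together with a degree-$2$ map $f \colon M' \to N$. Since $M' \to M$ is a finite cover and the transfer map on rational homology is injective, we get $b_1(M') \geq b_1(M) > 0$, so there exists a surjective homomorphism $\pi_1(M') \to \mathbb{Z}$.

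Third, using this surjection, for any positive integer $d$ I form the corresponding $d$-fold cyclic cover $q \colon M'' \to M'$. Then $M''$ is a finite cover of $M$ via the composition $q \circ p \colon M'' \to M$, and the composition $g := f \circ q \colon M'' \to N$ has degree $\deg(f)\cdot\deg(q) = 2d$, as degrees of maps between closed oriented manifolds multiply. This gives the desired virtual $2d$-domination.

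There is essentially no obstacle: the statement is an immediate corollary once Theorem \ref{main} is in hand, with the only ingredients being the multiplicativity of degree, the behavior of $b_1$ under finite covers, and the virtual $b_1>0$ result. The content of the corollary is that the parity constraint in Theorem \ref{main} (degree $2$) is not a genuine restriction among even degrees.
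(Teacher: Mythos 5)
Your argument is correct and is precisely the argument the paper intends; the paper merely states the corollary is ``immediate'' once one arranges $b_1(M)>0$, and you have filled in the (standard) details: apply Theorem \ref{main} to get a degree-$2$ map $f\colon M'\to N$, note $b_1(M')\geq b_1(M)>0$ so a $d$-fold cyclic cover $q\colon M''\to M'$ exists, and use multiplicativity of degree to conclude $\deg(f\circ q)=2d$.
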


Theorem \ref{main} also answers Question 8.2 in \cite{DLW} for closed hyperbolic $3$-manifolds, by taking $N$ to be any closed $3$-manifold which support the $\widetilde{PSL_2(\mathbb{R})}$ geometry.

\begin{col}\label{volume}
For any closed hyperbolic $3$-manifold $M$, $M$ admits a finite cover $M'''$, such that $M'''$ has positive Seifert volume ($Iso_e\widetilde{SL_2(\mathbb{R})}$-representation volume).
\end{col}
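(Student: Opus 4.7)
The plan is to combine Theorem~\ref{main} with two classical facts about the Seifert volume (the $\mathrm{Iso}_e\widetilde{SL_2(\mathbb{R})}$-representation volume): first, any closed oriented $3$-manifold carrying the $\widetilde{PSL_2(\mathbb{R})}$ geometry has strictly positive Seifert volume; second, Seifert volume is multiplicative under nonzero degree maps between closed oriented $3$-manifolds, in the sense that a degree-$d$ map $X\to Y$ forces $\mathrm{SV}(X)\geq |d|\cdot\mathrm{SV}(Y)$. Both statements are in the original Brooks--Goldman work, and they are exactly what is needed to convert Theorem~\ref{main} into a lower bound on Seifert volumes of finite covers of $M$.

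First I would fix a convenient target: take $N_0$ to be, for instance, the unit tangent bundle of a closed oriented hyperbolic surface, which is a closed oriented $3$-manifold admitting the $\widetilde{PSL_2(\mathbb{R})}$ geometry. By the first fact above, $\mathrm{SV}(N_0)>0$. This is really the only honest geometric input in the proof; all the deep $3$-manifold topology has been absorbed into Theorem~\ref{main}.

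Next, I apply Theorem~\ref{main} with $N=N_0$: it produces a finite cover $M'''\to M$ and a map $f\colon M'''\to N_0$ of degree $2$. Multiplicativity of Seifert volume under $f$ then yields
\[
\mathrm{SV}(M''')\;\geq\; 2\cdot\mathrm{SV}(N_0)\;>\;0,
\]
which is exactly the conclusion of Corollary~\ref{volume}.

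There is no real obstacle here beyond correctly citing the Brooks--Goldman results; the substantive work is entirely packaged inside Theorem~\ref{main}. The only conceptual step is to recognize that, in order to exhibit positive Seifert volume on some finite cover of $M$, it is enough to virtually dominate a \emph{single}, well-chosen representative of the $\widetilde{PSL_2(\mathbb{R})}$ geometry; having made this reduction, Theorem~\ref{main} supplies not merely a nonzero degree map but a degree-$2$ map, which is more than sufficient.
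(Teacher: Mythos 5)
Your proposal is correct and is precisely the argument the paper has in mind: the paper derives Corollary~\ref{volume} in one line by applying Theorem~\ref{main} with $N$ any closed $3$-manifold carrying the $\widetilde{PSL_2(\mathbb{R})}$ geometry, leaving the standard facts about Seifert volume (positivity for $\widetilde{PSL_2(\mathbb{R})}$-manifolds and monotonicity under nonzero degree maps, due to Brooks--Goldman) implicit. You have simply spelled out the same reduction with a concrete choice of target $N_0$.
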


\begin{rem}
If we take $N$ to be connected sum of lens spaces, then Theorem \ref{main} implies a weaker version of the main result in \cite{Su}: for any closed hyperbolic $3$-manifold $M$, and any finite abelian group, $M$ admits a finite cover $M'$, such that $A$ is embedded into $Tor(H_1(N;\mathbb{Z}))$.
\end{rem}

Since closed $3$-manifolds with vanishing simplicial volume do not virtually dominate any closed hyperbolic $3$-manifold (by considering the simplicial volume), it is natural to ask the following question.

\begin{que}\label{sim}
For any closed oriented $3$-manifold $M$ with positive simplicial volume, whether $M$ virtually dominates any closed oriented $3$-manifold?
\end{que}

To give a positive answer to this question, it suffices to show that any irreducible closed oriented $3$-manifold with a hyperbolic piece in its JSJ decomposition virtually dominates some closed oriented hyperbolic $3$-manifold. Theorem 1.6 (1) in \cite{DLW} gives some evidence for Question \ref{sim}. If one can extend Kahn-Markovic and Liu-Markovic's theories to cusped hyperbolic $3$-manifolds, then Question \ref{sim} can be confirmed.

It is also natural to ask whether the following result holds, which tries to strengthen Theorem \ref{main} quantitatively.

\begin{que}
For any closed oriented hyperbolic $3$-manifold $M$ (or closed oriented $3$-manifold with positive simplicial volume), whether $M$ virtually $1$-dominates any closed oriented $3$-manifold?
\end{que}

We can only prove that virtual $2$-domination exists, but could not promote it to virtual $1$-domination, basically because of the $\mathbb{Z}_2$-valued invariant $\sigma$ (see Corollary \ref{bound}), which was introduced in Theorem 1.4 of \cite{LM}.

In Section \ref{review}, we will give a quick review of the results in \cite{KM1}, \cite{KM2} and \cite{LM} which are necessary for this paper. In Section \ref{construction1}, we will give the topological part of the proof of Theorem \ref{main}. In Section \ref{handle}, for any closed oriented $3$-manifold $N$, we will construct a nice handle structure on it, by using results in \cite{HLMW}. In Section \ref{construction2}, for any closed hyperbolic $3$-manifold $M$, we will construct a $2$-complex $Z$ (hinted by the handle structure of $N$) and a $\pi_1$-injective immersion $j:Z\looparrowright M$. In Section \ref{pinching}, we will describe how does the existence of this immersed $\pi_1$-injective $2$-complex implies Theorem \ref{main}. The proof of the $\pi_1$-injectivity of $j:Z\looparrowright M$ will be delayed to Section \ref{injectivity}.

\subsection{Sketch of the Proof} Here we give a brief sketch of the proof of Theorem \ref{main}.

In \cite{Th1}, Thurston described a hyperbolic $3$-orbifold $M_0$, whose underlying space is $S^3$, and the singular set is the Borromean rings with indices $4$. Moreover, in \cite{HLMW}, it is shown that $M_0$ has the following universal property.

\begin{thm}[\cite{HLMW}]\label{HLMW}
For any closed oriented $3$-manifold $N$, there is a finite index subgroup $\Gamma\subset \pi_1(M_0)\subset PSL_2(\mathbb{C})$, such that $N$ is homeomorphic to $\mathbb{H}^3/\Gamma$ with respect to their orientations. Here we ignore the orbifold structure of $\mathbb{H}^3/\Gamma$, and just think it as a $3$-manifold.
\end{thm}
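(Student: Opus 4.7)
The plan is to combine two classical universality theorems about branched covers of $S^{3}$. First, by the Hilden--Montesinos representation theorem, every closed oriented $3$-manifold $N$ admits a presentation as a $3$-fold simple branched covering $\pi: N \to S^{3}$ branched over some link $L_{N}\subset S^{3}$. This gives a baseline description of $N$ as a branched cover of $S^{3}$, but with no control over the location of the ramification locus.

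The heart of the argument is then to promote this to a branched covering over the Borromean rings $B$. Following Thurston's universal-link philosophy, one argues (and this is the content of \cite{HLMW}) that $B$, equipped with its orbifold structure of index $4$, is universal in a strong sense: any branched covering of $S^{3}$ can be refined to a branched covering of $S^{3}$ over $B$ with all branching indices dividing $4$. Combined with the first step, this presents $N$ as a branched cover of $(S^{3},B)$ with ramification orders in $\{1,2,4\}$.

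To finish, translate into orbifold language. Since $M_{0}$ is hyperbolic (Thurston), $\pi_{1}(M_{0})$ embeds as a discrete cocompact subgroup of $PSL_{2}(\mathbb{C})$ acting on $\mathbb{H}^{3}$. A branched covering of $S^{3}$ over $B$ with all branching indices dividing $4$ corresponds precisely to an orbifold covering of $M_{0}$ whose total space is a manifold, since the cyclic isotropy groups of order $4$ over $B$ unwind completely; this in turn corresponds to a finite-index torsion-free subgroup $\Gamma\subset\pi_{1}(M_{0})$ with $\mathbb{H}^{3}/\Gamma \cong N$. Orientations are preserved throughout, since every branched and orbifold covering in the chain is orientation-preserving.

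The principal obstacle is the second step. Starting from an arbitrary ramification link $L_{N}$, producing a further branched cover whose ramification locus projects to $B$ with branching indices in $\{1,2,4\}$ requires delicate link-theoretic surgery: one replaces components, unknots and unlinks them via auxiliary branched covers, and funnels an arbitrary ramification link into the fixed universal Borromean rings orbifold. This is the non-trivial core of \cite{HLMW}; assuming their result, the remainder of the argument is a translation between the categories of branched covers of pairs $(S^{3},B)$ and of orbifold covers of $M_{0}$.
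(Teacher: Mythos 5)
The paper does not prove this theorem --- it cites \cite{HLMW} directly --- so the relevant comparison is between your sketch and what the statement actually asserts. Your outline of the HLMW strategy (Hilden--Montesinos $3$-fold branched covering, universality of the Borromean rings with branching indices dividing $4$, translation into the orbifold language of $M_0$) is a reasonable description of the structure of their argument. However, you have introduced a genuine error in the last step that contradicts both the theorem's own wording and basic facts about $3$-manifolds.

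You claim that a branched covering of $(S^3, B)$ with all branching indices dividing $4$ ``corresponds to a finite-index torsion-free subgroup $\Gamma\subset\pi_1(M_0)$'' because ``the cyclic isotropy groups of order $4$ over $B$ unwind completely.'' This is false. The unwinding happens only where the local branching index equals exactly $4$; over a component of the ramification locus with index $1$ or $2$, the cone angle $\pi/2$ in $M_0$ pulls back to a cone angle of $\pi/2$ or $\pi$, so the covering orbifold still has nonempty singular locus there. In general the HLMW construction does produce indices $1$, $2$, and $4$ (and all three genuinely occur; see the discussion in Section \ref{handle} and Theorem 1.1 of \cite{HLMW}), so $\Gamma$ is \emph{not} torsion-free. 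Indeed, if $\Gamma$ were torsion-free then $N\cong\mathbb{H}^3/\Gamma$ would carry a hyperbolic structure, which is absurd for arbitrary closed $N$ (e.g.\ $S^3$, $S^1\times S^2$, lens spaces). This is precisely the content of the caveat in the statement: ``Here we ignore the orbifold structure of $\mathbb{H}^3/\Gamma$, and just think it as a $3$-manifold.'' The theorem asserts only that the underlying topological space of the quotient orbifold is homeomorphic to $N$, not that $\Gamma$ acts freely. Your argument as written proves too much, and the claim of torsion-freeness must be dropped; the correct correspondence is between branched covers of $(S^3,B)$ with indices in $\{1,2,4\}$ and finite-index subgroups $\Gamma$ (possibly with torsion) whose quotient orbifold has underlying space $N$.
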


In Section \ref{handle}, we will construct an orbifold handle structure (see Definition \ref{ohs}) for $M_0$, by following the geometry of the regular dodecahedron (or equivalently, regular icosahedron). Then this orbifold handle structure of $M_0$ is lifted to an orbifold handle structure of $\mathbb{H}^3/\Gamma$, by the finite sheet cover provided by Theorem \ref{HLMW}. Then we have a nice handle structure of $N$, which is related with the geometry of the regular icosahedron.

For a closed $3$-manifold (orbifold) $P$ endowed with an (orbifold) handle structure, we will use $P^{(1)}$ to denote the union of $0$- and $1$-handles, and use $P^{(2)}$ to denote the union of $0$-, $1$- and $2$-handles.

We will also construct a $2$-subcomplex $X\subset M_0$ which is a deformation retract of $M_0^{(2)}$, and $X$ lifts to a $2$-subcomplex $Y\subset N$ which is a deformation retract of $N^{(2)}$.

For any closed oriented hyperbolic $3$-manifold $M$ and any point $p\in M$, choose twelve unit vectors in $T^1_p M$ which correspond with the normal vectors of the twelve faces of the regular dodecahedron. By using the exponential mixing property of the frame flow (\cite{Mo}, \cite{Po}), we can construct an immersion of the $1$-skeleton $X^{(1)}$ of $X$ into $M$, denoted by $j': X^{(1)}\looparrowright M$. This construction of $j': X^{(1)}\looparrowright M$ is hinted by the geometry of the regular dodecahedron, and satisfies the following conditions.
\begin{itemize}
\item The $0$-cell of $X^{(1)}$ is mapped to $p$.
\item The six $1$-cells are mapped to geodesic arcs in $M$ based at $p$, and their tangent vectors at $p$ are very close to two of those twelve unit vectors (corresponding with the $1$-handles of the orbifold handle structure of $M_0$).
\item The image of $1$-cells of $X^{(1)}$ are homologous to $0$ in $H_1(M;\mathbb{Z})$.
\item There exists a large real number $R>0$, such that the following holds. For any $2$-cell of $X$, the image of the boundary of this $2$-cell in $M$ is homotopic to a closed geodesic whose complex length is very close to $4R$ or $R$ (dependsing on whether this $2$-cell intersects with the singular set of $M_0$ or not).
\end{itemize}

Since the $1$-skeleton $Y^{(1)}$ of $Y$ is a finite cover of $X^{(1)}$, $j': X^{(1)}\looparrowright M$ induces an immersion $Y^{(1)}\looparrowright M$. We take two copies of the immersed $2$-complex $Y^{(1)}\looparrowright M$, and denote them by $Y_1^{(1)}\looparrowright M$ and $Y_2^{(1)}\looparrowright M$. For any $2$-cell $c_i$ in $Y$ (with an arbitrary orientation), let $\gamma_i$ be the oriented closed geodesic homotopic to the image of $\partial c_i$ in $M$. Take two copies of $\gamma_i$ and denote them by $\gamma_i^1$ and $\gamma_i^2$, then the recent result in \cite{LM} (see Corollary \ref{bound}) implies that $\gamma_i^1$ and $\gamma_i^2$ bound an immersed oriented almost totally geodesic subsurface $S_i$ in $M$ (possibly disconnected).

By pasting the immersed $1$-complexes $Y_1^{(1)}$ and $Y_2^{(1)}$, almost totally geodesic surfaces $\{S_i\}$, and almost totally geodesic annuli connecting $\gamma_i^j$ with $Y_j^{(1)}$ for $j=1,2$, we get an immersed $2$-complex $j:Z\looparrowright M$. The $2$-complex $Z$ is connected and almost totally geodesic in $M$ except along $Y_1^{(1)} \cup Y_2^{(1)}\subset Z$. Moreover, if the surfaces $S_i$ are complicated enough, then $j_*:\pi_1(Z)\rightarrow \pi_1(M)$ is injective.

Since Agol showed that the groups of hyperbolic $3$-manifolds are LERF (\cite{Ag}), $M$ admits a finite cover $M'$ such that a geometric neighborhood $Z$ is embedded into $M'$, and this neighborhood is denoted by $K$. Let $A_i$ be the annulus on $\partial N^{(1)}$ where the $i$th $2$-handle is attached, then $K$ is homeomorphic to the quotient space of two copies of $N^{(1)}$ and the disjoint union of $\{S_i\times I\}$, by pasting $(\partial S_i)\times I$ to the two copies of $A_i$.

Then there is a proper degree-$2$ map $h:(K,\partial K) \rightarrow (N^{(2)},\partial N^{(2)})$, which maps the two copies of $N^{(1)}$ in $K$ to $N^{(1)}\subset N^{(2)}$ by identity map, and maps $S_i\times I$ to the corresponding $2$-handle of $N$. Then by pinching the components of $M'\setminus K$ to wedge of $3$-balls, $h:K\rightarrow N^{(2)}$ can be extended to a degree-$2$ map $f:M'\rightarrow N$, as desired.

{\bf Acknowledgement:} The author is grateful to his advisor David Gabai for many helpful conversations, suggestions and encouragements, and thanks Shicheng Wang for introducing the author to the field of non-zero degree map between $3$-manifolds. The author thanks Ian Agol and the organizers of the conference "Cube complexes and $3$-manifolds", which is held at the University of Illinois at Chicago, since the author first learned about Agol's question in his talk during this conference. The author also thanks Ian Agol and Shicheng Wang for providing help on math literatures, thanks Yi Liu for pointing out Lemma \ref{nullhomologous} to the author, and thanks Stefan Friedl for comments on a previous draft.

\section{Kahn-Markovic and Liu-Markovic's Works on Constructing Almost Totally Geodesic Subsurfaces}\label{review}

In this section, we give a quick review of Kahn-Markovic and Liu-Markovic's works on constructing almost totally geodesic subsurfaces in closed hyperbolic $3$-manifolds. All the material in this section can be found in \cite{KM1}, \cite{KM2} and \cite{LM}, and we only state the results that are necessary for this paper.

In \cite{KM1}, Kahn and Markovic proved the following Surface Subgroup Theorem, which is the first step to prove Thurston's Virtual Haken and Virtual Fibered Conjectures. (The conjectures were raised in \cite{Th2}, and settled in \cite{Ag}).

\begin{thm}[\cite{KM1}]\label{surface}
For any closed hyperbolic $3$-manifold $M$, there exists an immersed closed hyperbolic surface $f:S\looparrowright M$, such that $f_*:\pi_1(S)\rightarrow \pi_1(M)$ is an injective map.
\end{thm}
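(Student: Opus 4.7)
The plan is to construct $S$ by gluing together many immersed nearly totally geodesic pairs of pants in $M$, following the Kahn-Markovic strategy via exponential mixing of the frame flow on the orthonormal frame bundle $F(M)$. Fix a large parameter $R$ and a small tolerance $\epsilon>0$. Define an $(R,\epsilon)$-good pair of pants to be a $\pi_1$-injective immersion $\iota:P\looparrowright M$ from a hyperbolic pair of pants whose three cuffs are closed geodesics with complex length within $\epsilon$ of $2R$, and which lifts to $\mathbb{H}^3$ as a map staying Hausdorff-close to a totally geodesic pair of pants. These good pants are the building blocks.

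First I would enumerate good pants by noting that specifying one with a marked frame at a basepoint ("foot") amounts to choosing an ordered pair of frames in $F(M)$ whose flow along geodesics of length approximately $R$ satisfies a standard pants relation up to error $\epsilon$. Moore's theorem on exponential mixing of the frame flow (see \cite{Mo}, \cite{Po}) then gives, for $R$ large, a near-equidistribution statement: up to $O(e^{-\alpha R})$ errors, the measure of good pants with feet in any reasonably sized region of $F(M)$ is proportional to the product of the volumes. As a consequence, for each closed geodesic $\gamma$ of complex length near $2R$, the finite set of feet of good pants having $\gamma$ as one cuff is distributed on the unit normal torus $N^1(\gamma)/\langle\gamma\rangle$ nearly uniformly with respect to Lebesgue measure.

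Next, for each such cuff $\gamma$, consider the set of feet arriving at $\gamma$ from both sides, and seek an involution that pairs each foot on one side with a foot on the other whose image in $N^1(\gamma)/\langle\gamma\rangle$ differs by the ``shift by 1'' translation used to produce a Fuchsian gluing. Near-equidistribution allows one to run Hall's marriage theorem on the bipartite graph of $\epsilon$-close matches; verifying Hall's deficiency condition uniformly over all cuffs is the technical heart of the construction. Assembling all pants according to this global matching produces a closed oriented surface $S$ and an immersion $f:S\looparrowright M$.

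Finally, because the gluings are $\epsilon$-close to the standard shift-by-1 identifications and each pant is nearly totally geodesic, $f$ is an $(R,\epsilon)$-nearly totally geodesic immersion in the sense of Kahn-Markovic. A standard Morse-theoretic argument in $\mathbb{H}^3$ then shows that the universal cover develops to a quasi-isometrically embedded quasi-plane, so $f_*:\pi_1(S)\to\pi_1(M)$ is injective and the image is quasi-Fuchsian. The main obstacle is the combinatorial matching step: near-equidistribution only gives approximate uniformity of feet on each cuff, and producing a globally consistent shear-1 pairing without losing any pants requires delicate quantitative estimates feeding into Hall's theorem, which is by far the most technically intricate part of the argument.
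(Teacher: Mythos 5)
This theorem is quoted from \cite{KM1}; the paper gives no proof of its own, only a short recap in Section~\ref{review}. Your sketch correctly recounts the Kahn--Markovic strategy---$(R,\epsilon)$-good pants built from tripods of frames, exponential mixing of the frame flow yielding near-equidistribution of feet on the unit normal torus of each good curve, a Hall-type matching to glue pants with an almost-unit shear, and a quasi-geodesic/quasi-plane estimate in $\mathbb{H}^3$ for $\pi_1$-injectivity---and so matches the cited approach in both outline and emphasis. Two small calibration notes: the cuff-length normalization differs by a factor of two between your statement and the paper's (the paper writes $|\mathbf{hl}_\Pi(\gamma_i)-\tfrac{R}{2}|<\epsilon$, so cuffs have complex length near $R$, whereas you wrote near $2R$, which is the convention in the original \cite{KM1}); and what you call the ``Morse-theoretic argument'' for injectivity is really the direct geometric estimate that the developing map of $\widetilde{S}$ is a quasi-isometric embedding (Lemma 3.8 of \cite{LM} is the version the paper invokes), with the Morse lemma for quasi-geodesics entering only as one ingredient. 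Neither affects the correctness of the proposal.
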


Actually, the surfaces constructed in Theorem \ref{surface} are almost totally geodesic subsurfaces, which are constructed by pasting oriented {\it good pants} together in an almost totally geodesic way. Given a closed hyperbolic $3$-manifold $M$, for any small number $\epsilon>0$ and large number $R>0$, the set of $(R,\epsilon)$-good pants ${\bold \Pi}_{R,\epsilon}$ consists of homotopy classes of immersed oriented pair of pants $\Pi \looparrowright M$, such that the three cuffs of $\Pi$ are mapped to closed geodesics $\gamma_i$, and $|\bold{hl}_{\Pi}(\gamma_i)-\frac{R}{2}|<\epsilon$ holds for $i=1,2,3$ (for the definition of $\bold{hl}_{\Pi}(\gamma)$, see \cite{KM1} page 1131).

These oriented good pants are pasted along oriented {\it good curves}, which are oriented closed geodesics in $M$ with complex length $2\epsilon$-close to $R$, and the set consists of such good curves is denoted by ${\bold \Gamma}_{R,\epsilon}$. For two good pants pasting along a good curve, an almost $1$-shift ($|s(C)-1|<\frac{\epsilon}{R}$) should be applied (for the definition of $s(C)$, see \cite{KM1} page 1132). This almost $1$-shift is an essential condition to guarantee that $f_*:\pi_1(S)\rightarrow \pi_1(M)$ is injective. In \cite{KM1}, Kahn and Markovic showed that, for any good curve $\gamma$, the feet of good pants on $\gamma$ are equidistributed on the (half) unit normal bundle of $\gamma$, i.e. the counting measure of the feet of good pants is close to some scaling of the Lebesgue measure on the (half) unit normal bundle. So the immersed surface satisfying $|s(C)-1|<\frac{\epsilon}{R}$ can be constructed. For more precise statement, see Theorem 3.4 of \cite{KM1}.

After proving the Surface Subgroup Theorem, Kahn and Markovic worked on $1$-dimensional lower, and proved the Ehrenpreis conjecture:
\begin{thm}[\cite{KM2}]\label{Ehrenpreis}
Let $S$ and $T$ be two closed Riemann surfaces with negative Euler characteristics. Then for any $k>1$, $S$ and $T$ admit finite covers $S_1$ and $T_1$ respectively, such that there exists a $k$-quasiconformal map $f:S_1 \rightarrow T_1$.
\end{thm}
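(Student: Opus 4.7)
The plan is to follow the template of the Surface Subgroup Theorem just reviewed, one dimension down. On each of $S$ and $T$ we would form the sets of $(R,\epsilon)$-good pants and $(R,\epsilon)$-good curves, exactly as in Section \ref{review} but inside a closed hyperbolic surface rather than a $3$-manifold. Each good pants is $(1+O(\epsilon+e^{-R}))$-close to a fixed ``standard'' model pants with cuffs of length $R$ and right-angled hexagonal seams. The strategy is therefore to produce finite covers $S_1\to S$ and $T_1\to T$ which each admit a pants decomposition into good pants, together with a bijection between the pants of $S_1$ and those of $T_1$ that matches the Fenchel--Nielsen twist (shear) along every shared cuff. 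Gluing the corresponding model identifications pants-by-pants would then produce a map $f:S_1\to T_1$ whose quasiconformal dilatation tends to $1$ as $R\to\infty$ and $\epsilon\to 0$, giving the conclusion for any prescribed $k>1$ by choosing the parameters accordingly.

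The existence of sufficiently many good pants, and the equidistribution of their feet on a given good curve, would be proved as in \cite{KM1}: the essential input is exponential mixing of the geodesic flow on the unit tangent bundle of a closed hyperbolic surface, replacing the frame flow used in the $3$-manifold argument. This yields a counting measure on the feet of good pants that is close to the uniform measure on the (half) unit normal bundle of the curve, with a multiplicative error that decays with $R$.

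The real work is a bipartite matching, carried out simultaneously on $S$ and $T$, that respects unit shears. Good curves are parametrized by homotopy class and a twist coordinate modulo $\ell(C)\approx R$; gluing two good pants along a common cuff with shear close to $1$ requires their feet to be almost antipodal on the unit normal bundle. On a single surface, equidistribution together with Hall's marriage theorem already selects good pants that pair up along cuffs with near-unit shear and thus close up into a finite cover. The new ingredient needed here is to perform this matching coherently for $S$ and $T$ at once, so that the resulting pants of $S_1$ and $T_1$ are in bijection with identical shears on corresponding cuffs. I would approach this by first passing to a common large cover of the moduli of twist coordinates (a randomization step), then smoothing the feet distributions on each side until they are proportional up to an error smaller than the mixing gap, and only then applying a marriage argument to extract matched cover data. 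This coordinated shear matching, together with the quantitative mixing estimates that control the errors throughout, is the main obstacle; once it is in place, the pants-by-pants assembly of $f$ and the estimate on its quasiconformal dilatation are routine, since the model identifications of adjacent standard pants agree along a cuff precisely when their shears coincide.
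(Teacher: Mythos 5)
Your proposal contains a genuine gap, and it also departs from the actual structure of Kahn--Markovic's argument in a way that makes the plan unlikely to close.

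The gap is in the sentence asserting that, on a single surface, equidistribution together with Hall's marriage theorem ``already selects good pants that pair up along cuffs with near-unit shear and thus close up into a finite cover.'' This is exactly what fails in dimension~$2$, and it is the problem that drove Kahn and Markovic to develop new machinery. The unit normal bundle of a closed geodesic in a surface is disconnected (two circles, one for each side), so the footprint measures of good pants on the left and on the right of a given good curve are two separate measures. Equidistribution only guarantees that each is close to a fixed multiple of Lebesgue measure; it does \emph{not} guarantee that their total masses agree, and when they do not, there is no perfect matching to be had, Hall or no Hall. Moreover, the imbalance cannot be repaired locally: inserting an extra pants to fix the count on one cuff changes the counts on its other two cuffs, so the correction must be globally consistent. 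Deciding when such a global correction exists requires a homological obstruction theory, and this is precisely the role of the good pants homology $\mathbb{R}\boldsymbol{\Gamma}_{R,\epsilon}/\partial\mathbb{R}\boldsymbol{\Pi}_{R,\epsilon}$ and the isomorphism with $H_1(S;\mathbb{R})$ recorded in Theorem~\ref{homology1}. Your outline gestures at ``randomization'' and ``smoothing the feet distributions'' but never identifies this homological tool, which is the missing key idea.

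Separately, the strategy of building $S_1$ and $T_1$ in parallel with a direct bijection between their good pants is not the route taken in \cite{KM2}, and it is hard to see how it could be made to work, since the good pants of $S$ and those of $T$ live in different hyperbolic surfaces with no natural correspondence between them. The actual argument is decoupled: Theorem~\ref{model} is proved for each surface \emph{separately}, producing a model surface $O$ with a pants decomposition having cuff length exactly $R$ and shear exactly $1$, together with a $k$-quasiconformal map from $O$ to a finite cover of the given surface. Two such model surfaces share a common finite cover (all such surfaces are built from identical pants glued with identical shears), and pulling back and composing the two quasiconformal maps through this common cover yields the map $f:S_1\to T_1$. Routing through the model surface is what replaces your coordinated two-surface matching with two independent one-surface problems, and it is where the whole construction becomes tractable.
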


To prove Theorem \ref{Ehrenpreis}, Kahn and Markovic showed the following theorem.

\begin{thm}[\cite{KM2}]\label{model}
Let $S$ be a closed hyperbolic Riemann surface. Then for any $k>1$, there exists $R_0(K,S)>0$, such that for any $R>R_0(K,S)$, the following statement holds. There is a closed hyperbolic Riemann surface $O$ with pants decomposition $\mathcal{C}$, satisfying $\bold{l}(C)=R$ and $s(C)=1$ for any $C\in \mathcal{C}$, such that there exists a $k$-quasiconformal map $g:O\rightarrow S_1$ from $O$ to some finite cover $S_1$ of $S$.
\end{thm}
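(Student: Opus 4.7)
The plan is to adapt the three-dimensional Kahn--Markovic strategy of Theorem \ref{surface} to the two-dimensional setting, the essential difference being that the target here is an honest closed hyperbolic surface $S_1$ covering $S$ rather than a $\pi_1$-injective immersion, and the assembled good pants must match a fixed abstract model $O$ with cuff length exactly $R$ and shear exactly $1$.

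First I would work on the unit tangent bundle $T^1 S$ and use exponential mixing of the geodesic flow on $T^1 S$ to show that, for large $R$ and small $\epsilon>0$, there is an abundant supply $\mathbf{\Gamma}_{R,\epsilon}$ of good curves (closed geodesics on $S$ of length $2\epsilon$-close to $R$) and $\mathbf{\Pi}_{R,\epsilon}$ of good pants. The crucial quantitative input, exactly parallel to Theorem~3.4 of \cite{KM1}, is that for every good curve $\gamma$ the feet of good pants hanging off $\gamma$ are equidistributed on the (half) unit normal bundle of $\gamma$, with total variation error going to $0$ as $R\to\infty$ and $\epsilon\to 0$ at a controlled rate.

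Next I would construct an immersed closed surface by pasting good pants along good curves with almost $1$-shift. The pasting data along each $\gamma\in\mathbf{\Gamma}_{R,\epsilon}$ is a perfect bipartite matching between the feet of good pants on the two sides of $\gamma$; the feasibility lemma of \cite{KM1}, applied to the equidistribution above, produces a matching whose paired feet all satisfy $|s-1|<\epsilon/R$. Glueing these matchings yields an immersed closed hyperbolic surface $f:S'\looparrowright S$ whose pants decomposition has cuff lengths $\epsilon$-close to $R$ and shears $\epsilon/R$-close to $1$. Since surface groups are LERF, $S'$ lifts to an embedded surface in some finite cover of $S$, which we again call $S_1$.

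Finally I would compare $S_1$ with the combinatorial model $O$ carrying the same pants-decomposition pattern but with $\bold{l}(C)=R$ and $s(C)=1$ for each $C\in\mathcal{C}$. Mapping each pants of $O$ to the corresponding pants of $S_1$ by the natural Fenchel--Nielsen affine identification, and matching cuff collars by the corresponding shear adjustment, gives a map $g:O\to S_1$ whose quasiconformal distortion is controlled on each pants by the length mismatch and on each cuff collar by the shear mismatch, both of size $O(\epsilon/R)$. Both contributions tend to $1$ as $R\to\infty$ with $\epsilon$ chosen as a suitable power of $R^{-1}$, so taking $R_0(k,S)$ large enough forces the global QC constant below $k$. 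The main obstacle, as in the three-dimensional case, is establishing the feasibility lemma quantitatively: one must prove that the counting measure on good-pants feet is close enough to Lebesgue measure (in a norm tailored to the $(\epsilon/R)$-perturbative pasting) that the required matchings exist \emph{simultaneously} for all good curves, and this forces a careful calibration of $\epsilon$ as a function of $R$.
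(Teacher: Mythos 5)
Your proposal overlooks the central new difficulty that distinguishes the two-dimensional case from the three-dimensional Surface Subgroup Theorem, and this is precisely what makes Theorem~\ref{model} genuinely hard. In dimension~$3$, the unit normal bundle of a closed geodesic is a connected torus, so the feet of good pants on a good curve $\gamma$ all live in one connected space and equidistribution plus a Hall-type matching lemma suffices. In dimension~$2$, the unit normal bundle of $\gamma\subset S$ has \emph{two} components, corresponding to the two sides of $\gamma$, and a gluing requires matching left feet to right feet. For this to be feasible one needs the number of good pants on the left of $\gamma$ to \emph{exactly} equal the number on the right; equidistribution only gives that these counts are close, not equal. This imbalance problem is exactly why Kahn and Markovic had to introduce the good pants homology (Theorem~\ref{homology1}, showing that the $\mathbf{\Pi}_{R,\epsilon}$-good pants homology is naturally isomorphic to $H_1(S;\mathbb{Q})$) and a randomization technique: one first corrects the imbalance by adding a suitable element of $\partial\,\mathbb{Q}\mathbf{\Pi}_{R,\epsilon}$, then randomizes to get an honest nonnegative integer-valued weight on good pants with exactly balanced boundary. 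Your step ``the feasibility lemma of \cite{KM1} produces a matching'' simply does not apply here without this machinery, so the proposal as written has a genuine gap.

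A smaller point: once you have a closed $\pi_1$-injective immersed surface $S'\looparrowright S$, the LERF step is unnecessary in dimension $2$, since any $\pi_1$-injection of one closed surface group into another has finite index (an infinite-index subgroup of a surface group is free), so $S'\to S$ is already homotopic to a finite covering. The final quasiconformal comparison between the approximate assemblage and the exact Fenchel--Nielsen model $O$ is in the right spirit and is indeed how the $k$-quasiconformal conclusion is extracted in \cite{KM2}.
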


As the proof of Theorem \ref{surface}, one can try to paste immersed good pants in $S$ along good curves, to obtain a finite cover of $S$ which satisfies $|\bold{l}(C)-R|<2\epsilon$ and $|s(C)-1|<\frac{\epsilon}{R}$, then this finite cover gives the desired $k$-quasiconformal map. To make the pasting construction works, one need to make sure that, for any good curve $\gamma$ in $S$, the number of good pants to the left of $\gamma$ should exactly equal the number of good pants to the right of $\gamma$. However, the equidistribution result only claims that these two numbers are very close to each other, but may not be equal. In dimension $3$, since the unit normal bundle of a closed geodesic is connected (a topological torus), such a problem does not appear. However, one does need to take care of this imbalance problem in dimension $2$.

To deal with the problem, Kahn and Markovic studied the {\it good pants homology}. Two elements $c_1,c_2$ in $\mathbb{R}{\bold \Gamma}_{R,\epsilon}$ are equivalent in the ${\bold \Pi}_{R,\epsilon}$-good pants homology if there exists $w\in \mathbb{R}{\bold \Pi}_{R,\epsilon}$ such that $\partial w=c_1-c_2$. Then the ${\bold \Pi}_{R,\epsilon}$-good pants homology is defined to be $\mathbb{R}{\bold \Gamma}_{R,\epsilon}/\partial \mathbb{R}{\bold \Pi}_{R,\epsilon}$, and the following result was proved in \cite{KM2}.

\begin{thm}[\cite{KM2}]\label{homology1}
Given a closed hyperbolic surface $S$, for small enough $\epsilon>0$ depending on $S$ and large enough $R>0$ depending on $\epsilon$ and $S$, the ${\bold \Pi}_{R,\epsilon}$-good pants homology is naturally isomorphic with $H_1(S;\mathbb{R})$. Moreover, the same statement holds if the real coefficient is replaced by rational coefficient.
\end{thm}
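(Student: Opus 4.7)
The plan is to define a natural map
\[
\Phi: \mathbb{R}\mathbf{\Gamma}_{R,\epsilon}/\partial\mathbb{R}\mathbf{\Pi}_{R,\epsilon} \longrightarrow H_1(S;\mathbb{R})
\]
by sending a good curve $\gamma\in \mathbf{\Gamma}_{R,\epsilon}$ to its homology class $[\gamma]\in H_1(S;\mathbb{R})$ and extending $\mathbb{R}$-linearly, then to show $\Phi$ is both surjective and injective. The map is well-defined because each oriented good pair of pants is itself a $2$-chain in $S$ whose boundary equals the formal sum of its three cuffs, so $\partial\mathbb{R}\mathbf{\Pi}_{R,\epsilon}$ automatically lies in the kernel of $\gamma\mapsto[\gamma]$.

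For surjectivity I would invoke the equidistribution of good curves in $T^1S$ (the $2$-dimensional analogue of Theorem 3.4 of \cite{KM1}, resting on exponential mixing of the geodesic flow): as $R\to\infty$ with $\epsilon=\epsilon(R)$ suitably chosen, the set $\mathbf{\Gamma}_{R,\epsilon}$ equidistributes in $T^1S$ with respect to Liouville measure. Pairing with any closed $1$-form $\omega$ representing a nonzero class in $H^1(S;\mathbb{R})\cong H_1(S;\mathbb{R})^*$ shows that the integrals $\int_\gamma\omega$ take on all real values; in particular no nonzero cohomology class vanishes on the image of $\Phi$, giving surjectivity.

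Injectivity is the heart of the matter. Given a null-homologous chain $c=\sum_i a_i\gamma_i$, I want to produce a good-pants chain $w$ with $\partial w=c$. I would (a) realize the null-homology by an oriented, possibly immersed subsurface $\Sigma\subset S$ whose weighted boundary recovers $c$; (b) choose a topological pants decomposition of $\Sigma$ and, for each topological pair of pants, substitute a nearby good pair of pants using the abundance (via equidistribution) of good pants with cuffs close to any prescribed geodesic; (c) cancel the resulting imbalance along each good cuff by adjoining auxiliary good pants, exploiting the equidistribution of feet of good pants on the unit normal bundle of each good curve.

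The main obstacle is carrying out step (c) coherently on all good cuffs simultaneously: a pant inserted to balance at one good curve $\gamma$ generically disrupts the balance at its other two cuffs. I would set up the correction problem as a linear system whose unknowns are the coefficients of correction pants and whose equations are the vanishing of imbalance along each good curve, and then argue via quantitative equidistribution of feet that this system has full rank on the complement of $H^1(S;\mathbb{R})$, with a uniformly bounded inverse; a geometric-series / fixed-point iteration then makes the residual imbalance converge to $0$, producing the desired $w$. Once injectivity is established over $\mathbb{R}$, the rational version follows because every chain, pant, boundary map, and linear system involved is defined over $\mathbb{Q}$, so the solvability of the correction system descends from $\mathbb{R}$ to $\mathbb{Q}$.
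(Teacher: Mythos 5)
This statement is quoted from \cite{KM2} as a black box; the present paper does not prove it, so the relevant comparison is with the proof in \cite{KM2}.

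Your surjectivity sketch is broadly in the right spirit but slightly off: plain equidistribution of good curves in $T^1S$ gives that the \emph{normalized} homology class $[\gamma]/R$ tends to $0$ on average (since any closed $1$-form integrates to $0$ against Liouville measure), which by itself does not yield that the integrals $\int_\gamma\omega$ realize all signs. One needs either a central-limit-type refinement (nondegenerate Gaussian distribution of homology classes of good curves), or the more elementary device used in \cite{KM2}: fix generators $A_1,\dots,A_{2g}$ of $\pi_1(S)$, use the connection principle to build good curves $\gamma_i^+,\gamma_i^-$ whose difference is homologous to $A_i$, and observe that these span. This part is repairable.

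The genuine gap is in your injectivity step (b). You propose to realize the null-homology of $c=\sum a_i\gamma_i$ by an immersed subsurface $\Sigma$, take a topological pants decomposition of $\Sigma$, and then ``substitute a nearby good pair of pants using the abundance of good pants with cuffs close to any prescribed geodesic.'' This last clause is false: an $(R,\epsilon)$-good pair of pants has all three cuffs with half-length within $\epsilon$ of $R/2$, i.e.\ its cuffs are \emph{good curves} of complex length close to $R$. The interior curves of a topological pants decomposition of $\Sigma$ are closed geodesics of whatever length they happen to have, and generically have no good curve nearby in free homotopy class. Equidistribution guarantees many good curves spread out in $T^1S$; it does not guarantee a good curve in a prescribed free homotopy class. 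So the topological subsurface $\Sigma$ cannot in general be ``upgraded'' to a good-pants chain, and step (c) never gets started. This is precisely why \cite{KM2} does not argue geometrically this way. Their proof of the Good Pants Homology Theorem is combinatorial and algebraic: they fix a finite generating set and a base frame, represent good curves as long words in the generators, and prove a chain of relations in good pants homology (the Algebraic Square Lemma, the Rotation and $XY$ lemmas, the splitting/simple-correction estimates) that allow any good curve to be reduced, modulo $\partial\mathbb{R}\mathbf{\Pi}_{R,\epsilon}$, to a canonical linear combination of finitely many ``standard'' good curves indexed by the abelianization of the underlying word. Injectivity then follows because the canonical form of a null-homologous chain is zero. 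The subsurface-plus-correction-system strategy you outline, even if the linear algebra in (c) were made precise, cannot replace this because it presupposes the existence of good pants in homotopy classes where none need exist.

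Your closing remark that the rational case ``descends'' because everything is defined over $\mathbb{Q}$ is also not automatic: a real solution to a $\mathbb{Q}$-linear system does imply a rational solution, but your correction system is not a single finite $\mathbb{Q}$-linear system --- it is a limit of an iteration, and one would have to argue separately that the limit can be taken inside $\mathbb{Q}\mathbf{\Pi}_{R,\epsilon}$. In \cite{KM2} the rational statement comes out directly because the reduction to canonical form is done with integer (hence rational) coefficients at every step.
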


By using Theorem \ref{homology1} and randomization technique, Kahn and Markovic could take care of the imbalance problem, and proved Theorem \ref{model}.

Later, in \cite{LM}, Liu and Markovic pushed the theory of good pants homology from dimension $2$ to dimension $3$. Instead of working on real (or rational) coefficient as in Theorem \ref{homology1}, they worked on homology with integer coefficient. In \cite{LM}, they proved the following theorem (comparing with Theorem \ref{homology1}).

\begin{thm}[\cite{LM}]\label{homology2}
Given a closed oriented hyperbolic $3$-manifold $M$, for small enough $\epsilon>0$ depending on $M$ and large enough $R>0$ depending on $\epsilon$ and $M$, let $\Omega_{R,\epsilon}(M)$ be the quotient $\mathbb{Z}{\bold \Gamma}_{R,\epsilon}/\partial\mathbb{Z}{\bold \Pi}_{R,\epsilon}$ (which is called the pants cobordism group in \cite{LM}). Then there is a natural isomorphism $\Phi:\Omega_{R,\epsilon}(M)\rightarrow H_1(SO(M);\mathbb{Z})$. Here $SO(M)$ denotes the bundle of orthonormal frames of $M$ which give the orientation of $M$.
\end{thm}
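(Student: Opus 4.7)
The plan is to construct the map $\Phi$ geometrically, deduce surjectivity from equidistribution, and then attack injectivity via a rational-then-integer boundary argument.

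First I would define $\Phi$ by lifting geodesics and pants to the frame bundle. For $\gamma\in{\bold \Gamma}_{R,\epsilon}$, take its oriented unit tangent field and complete it to an orthonormal frame by parallel transport along $\gamma$; this gives a closed loop $\widetilde\gamma\subset SO(M)$ (the choice of initial normal frame only shifts the lift by a loop in the fiber, but since $\pi_1(SO(3))=\mathbb{Z}_2$ one must specify a canonical normal framing, e.g.\ one that is compatible with the good-pants structure on $\gamma$). Set $\Phi([\gamma])=[\widetilde\gamma]$ and extend $\mathbb{Z}$-linearly. For each good pair of pants $\Pi\in{\bold \Pi}_{R,\epsilon}$, the almost totally geodesic immersion of $\Pi$ lifts to a $2$-chain $\widetilde\Pi\subset SO(M)$ whose boundary is $\widetilde{\gamma_1}+\widetilde{\gamma_2}+\widetilde{\gamma_3}$ (the cuff framings agree with the geodesic-flow lifts by construction). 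Hence $\Phi$ descends to a homomorphism $\Phi\colon\Omega_{R,\epsilon}(M)\to H_1(SO(M);\mathbb{Z})$.

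Next, surjectivity follows from the equidistribution of good curves. By the mixing of the frame flow (as already used in \cite{KM1}), for $R$ large the frames $\widetilde\gamma$ over good curves are asymptotically equidistributed in $SO(M)$; consequently finitely many such lifts span $H_1(SO(M);\mathbb{Z})$ over $\mathbb{Z}$, and integer linear combinations realize every class.

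Injectivity is the heart of the matter. Suppose $c\in\mathbb{Z}{\bold \Gamma}_{R,\epsilon}$ satisfies $\Phi(c)=0$. I would proceed in two stages. In stage one, I would adapt the argument of Theorem \ref{homology1} to dimension three to produce a rational bounding: some positive integer $N$ and $w\in\mathbb{Z}{\bold \Pi}_{R,\epsilon}$ with $Nc=\partial w$. The Kahn--Markovic good-pants machine applies once one works modulo boundaries and uses equidistribution to match feet up to arbitrarily small error on the unit normal torus of each good curve. In stage two, I would upgrade from $\mathbb{Q}$ to $\mathbb{Z}$: divide the unit normal torus over each good curve into small cells, treat pants feet as points labeled by cells and rotational parameters, and recast the task ``cancel $c$ exactly'' as an integer transportation problem on a bipartite graph (left-feet versus right-feet of $\gamma$'s appearing in $c$). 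Equidistribution guarantees that every cell contains a prescribed large number of pants feet on each side, so a Hall-type matching applies modulo a global obstruction; this obstruction lives precisely in the fiber $H_1(SO(3);\mathbb{Z})=\mathbb{Z}_2$, and vanishes by the hypothesis $\Phi(c)=0$. The hard part will be this integer matching combined with controlling the $\mathbb{Z}_2$ fiber class: one needs exponential mixing of the frame flow (\cite{Mo}, \cite{Po}) to secure sufficiently sharp equidistribution, and a careful bookkeeping (the pants cobordism group structure) to ensure that the matching can be realized by genuine good pants rather than only by formal $\mathbb{Z}$-combinations. This last step is what distinguishes the three-dimensional integer theory of \cite{LM} from the two-dimensional rational theory of \cite{KM2}.
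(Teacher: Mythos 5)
The paper does not contain its own proof of Theorem~\ref{homology2}: the theorem is cited from Liu--Markovic \cite{LM}, and the paper only recalls from \cite{LM} the definition of the isomorphism $\Phi$ via the canonical lifting $\hat{\gamma}$. So your proposal can only be checked against that recalled definition, and it is not compatible with it. The paper states that the canonical lift $\hat{\gamma}\colon S^1\to SO(M)$ is obtained by flowing a frame $e_p=(\vec{t},\vec{n},\vec{t}\times\vec{n})$ once around $\gamma$ by parallel transport, \emph{then performing a counterclockwise $2\pi$-rotation about $\vec{n}$}, and finally closing up by a $2\epsilon$-short path. Your construction omits the $2\pi$-rotation and instead tries to absorb the $\mathbb{Z}_2$-ambiguity into a ``canonical normal framing compatible with the good-pants structure.'' That is a genuine gap, not a cosmetic one. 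The $2\pi$-rotation is exactly the generator of the fiber $\pi_1(SO(3))=\mathbb{Z}_2$, and it is what makes the framing induced on a cuff by the lifted pair of pants agree, as an $H_1$-class, with the canonical lift of that cuff, i.e.\ what makes $\Phi(\partial\Pi)=0$ so that $\Phi$ descends to $\Omega_{R,\epsilon}(M)$. With the naive parallel-transport lift the pants-boundary framing and the geodesic-flow framing differ by the nontrivial element of $H_1(SO(3);\mathbb{Z})=\mathbb{Z}_2$, so the ``boundary of $\widetilde\Pi$ equals $\widetilde{\gamma_1}+\widetilde{\gamma_2}+\widetilde{\gamma_3}$'' step you assert would fail.

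Two smaller points. First, your parenthetical ``the choice of initial normal frame only shifts the lift by a loop in the fiber'' misidentifies the source of the $\mathbb{Z}_2$-ambiguity: changing the initial normal vector $\vec{n}$ merely conjugates the loop and does not change its class in $H_1(SO(M);\mathbb{Z})$; the $\mathbb{Z}_2$ choice is whether or not to insert the $2\pi$-twist. Second, your fallback ``pick the framing compatible with the good-pants structure on $\gamma$'' is circular: distinct good pants having $\gamma$ as a cuff could a priori single out different framings, so one must define $\hat{\gamma}$ intrinsically (parallel transport plus $2\pi$-twist) and then \emph{prove} compatibility with all good pants, which is the content of the relevant lemma in \cite{LM}. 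As for the rest of your sketch --- surjectivity from equidistribution, injectivity by producing a rational bounding and then upgrading to an integer matching using exponential mixing --- it is a plausible high-level guess at Liu--Markovic's argument, but since the paper under review does not reproduce that proof there is nothing in it to compare against.
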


For any $\gamma \in {\bold \Gamma}_{R,\epsilon}$, a map $\hat{\gamma}:S^1\rightarrow SO(M)$ is defined in \cite{LM}, which is called the {\it canonical lifting} of $\gamma$ and gives the definition of $\Phi$. To define $\hat{\gamma}$, choose a point $p\in \gamma$, and take an orthonormal frame $e_p=(\vec{t},\vec{n},\vec{t}\times \vec{n})$ in $T_pM$ such that $\vec{t}$ is tangent to $\gamma$ and follow the orientation of $\gamma$. Then $\hat{\gamma}$ is defined by first flow $e_p$ once around $\gamma$ by parallel transportation, then do a counterclockwise $2\pi$-rotation about $\vec{n}$, finally travel back to $e_p$ along a $2\epsilon$-short path. Then the isomorphism $\Phi:\Omega_{R,\epsilon}(M)\rightarrow H_1(SO(M);\mathbb{Z})$ is defined by $\gamma\rightarrow [\hat{\gamma}]\in H_1(SO(M);\mathbb{Z})$.

The following definition was implicitly given in \cite{LM}, and we restate it here.

\begin{defn}
Let $S$ be a compact oriented hyperbolic surface (closed or with boundary), and $\mathcal{C}=\{C_1,\cdots,C_n\}$ be a family of disjoint simple closed curves on $S$ which give a pants decomposition of $S$, such that each boundary component of $S$ corresponds with some $C_i$. An immersion $f:S\looparrowright M$ to a closed hyperbolic $3$-manifold $M$ is called an immersed {\it $(R,\epsilon)$-almost totally geodesic subsurface} in $M$ if the following conditions hold.
\begin{itemize}
\item For any pair of pants $\Pi$ in $S\setminus \mathcal{C}$ and $C\in \mathcal{C}$ be a boundary component of $\Pi$, $|\bold{hl}_{\Pi}(C)-\frac{R}{2}|<\epsilon$ holds.
\item For any two pair of pants sharing some $C\in \mathcal{C}$ as their boundary component, $|s(C)-1|<\frac{\epsilon}{R}$ holds.
\end{itemize}
\end{defn}

In Lemma 3.8 of \cite{LM}, it is shown that for any small enough $\epsilon>0$ and large enough $R>0$, immersed $(R,\epsilon)$-almost totally geodesic surfaces are $\pi_1$-injective.

As a corollary of Theorem \ref{homology2}, we have the following result (Corollary \ref{bound}). The statement of Corollary \ref{bound} is very similar with Theorem 1.4 of \cite{LM}, and the idea of the proof is scattered in a few proofs in \cite{LM}. We reorganize the material to make the following statement, which is convenient for the application in our case. We also give a brief proof here, by following the idea of \cite{LM}.

\begin{col}\label{bound}
Let $M$ be a closed hyperbolic $3$-manifold, then for any small enough $\epsilon>0$ depending on $M$, and any large enough $R>0$ depending on $\epsilon$ and $M$, the following statement holds. For any null-homologous oriented $(R,\epsilon)$-multicurve $L\in \mathbb{Z}{\bold \Gamma}_{R,\epsilon}$, there is a nontrivial invariant $\sigma(L)\in \mathbb{Z}_2$ defined, such that the following properties hold.
\begin{itemize}
\item $\sigma(L_1\sqcup L_2)=\sigma(L_1)+\sigma(L_2)$.
\item $\sigma(L)$ vanishes if and only if $L$ bounds an immersed compact oriented $(R,\epsilon)$ almost totally geodesic surface $S$ in $M$. Moreover, if we associate each component $l_i$ ($i=1,\cdots,n$) of $L$ with a normal vector $\vec{v}_i\in T^1_{p_i} M$ for some $p_i\in l_i$, then the surface $S$ can be constructed to satisfy the following condition. Let $C_i$ be the boundary component of $S$ which is mapped to $l_i$, and $\Pi_i$ be the pair of pants in $S$ with $C_i$ as one of its cuff, then one of the feet of $\Pi_i$ on $l_i$ is $\frac{\epsilon}{R}$-close to $v_i$.
\end{itemize}
\end{col}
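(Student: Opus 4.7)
The plan is to use Theorem~\ref{homology2} to reduce the corollary to a computation in $H_1(SO(M); \mathbb{Z})$, to extract $\sigma$ from the topology of the orthonormal frame bundle, and then to promote a formal pants cobordism to an honest almost totally geodesic subsurface with prescribed feet via the Kahn--Markovic and Liu--Markovic assembly techniques. To define $\sigma$, I note that every closed orientable $3$-manifold is parallelizable, so $SO(M) \cong M \times SO(3)$ and the fibration $SO(3) \hookrightarrow SO(M) \to M$ yields a short exact sequence
\[
0 \to H_1(SO(3); \mathbb{Z}) \to H_1(SO(M); \mathbb{Z}) \to H_1(M; \mathbb{Z}) \to 0,
\]
whose kernel on the right is a canonical $\mathbb{Z}_2$ (the image of the fiber inclusion, independent of any choice of parallelization). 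For null-homologous $L$, the class $\Phi(L) = [\hat{L}]$ lies in this $\mathbb{Z}_2$, and I declare that to be $\sigma(L)$. Additivity follows from $\widehat{L_1 \sqcup L_2} = \hat{L}_1 + \hat{L}_2$ as $1$-cycles, and $\sigma$ is nontrivial because the built-in $2\pi$-rotation in the definition of $\hat{\gamma}$ realizes the generator of $\pi_1(SO(3))$ for a suitable null-homologous good curve $\gamma$.

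For the vanishing characterization, if $\sigma(L) = 0$ then $\Phi(L) = 0$ in $H_1(SO(M); \mathbb{Z})$, so Theorem~\ref{homology2} provides $w \in \mathbb{Z}\bold{\Pi}_{R,\epsilon}$ with $\partial w = L$. The substantive step is to convert this formal pants cobordism into an actual oriented surface whose shift at every interior good curve $C$ satisfies $|s(C)-1| < \epsilon/R$; this is essentially the argument appearing in the proof of Theorem~1.4 of \cite{LM}. It rests on the Kahn--Markovic equidistribution of feet: for each interior good curve $C$, the feet of adjacent pants approximate Lebesgue measure on the unit normal torus of $C$, so cuffs on either side of $C$ can be matched with almost-$1$ shift, possibly after correcting by auxiliary good pants whose net boundary contribution is null-homologous. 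Conversely, if $L$ bounds such an $(R,\epsilon)$-almost totally geodesic surface $S$, then $S$ itself realizes $\partial[S] = L$ in $\Omega_{R,\epsilon}(M)$, so $\Phi(L) = 0$ and hence $\sigma(L) = 0$.

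To realize the prescribed feet $\vec{v}_i \in T^1_{p_i} l_i$, I would first invoke equidistribution to choose, for each $i$, an $(R,\epsilon)$-good pair of pants $\Pi_i$ having $l_i$ as a cuff with foot on $l_i$ within $\epsilon/R$ of $\vec{v}_i$. Let $L'$ be the signed multicurve of the remaining cuffs, so that $\partial(\sqcup_i \Pi_i) = L + L'$ in $\mathbb{Z}\bold{\Gamma}_{R,\epsilon}$. Then $L'$ is null-homologous and, by additivity together with the vanishing of $\sigma$ on boundaries, $\sigma(L') = \sigma(L) = 0$; the vanishing characterization produces a bounding surface $S'$ for $L'$, which can be arranged (by a final equidistribution step) so that its foot at each component of $L'$ is almost-$1$-shifted from the corresponding cuff foot of $\sqcup_i \Pi_i$. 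Gluing $\sqcup_i \Pi_i$ to $S'$ along $L'$ yields the required $S$. The main obstacle is precisely this geometric realization of a formal cobordism: Theorem~\ref{homology2} only supplies a formal chain of good pants, while the almost-totally-geodesic condition is rigidly metric, and it is the randomization and equidistribution machinery of \cite{KM1}, \cite{KM2}, and \cite{LM} that bridges the gap.
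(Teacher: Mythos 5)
Your definition of $\sigma$ as $\Phi(L)\in H_1(SO(3);\mathbb{Z})\cong\mathbb{Z}_2$, the additivity, the ``bounds implies vanishing'' direction, and the reduction of the main ``vanishing implies bounds'' direction to the ubiquity and nearly-even-footed gluing machinery of Liu--Markovic all match the paper's proof. The genuine gap is in the moreover clause. After choosing good pants $\Pi_i$ with $l_i$ as a cuff and foot $\frac{\epsilon}{R}$-close to $\vec v_i$, you let $L'$ be the remaining cuffs, observe $\sigma(L')=0$, invoke the (just-proven) vanishing characterization to obtain $S'$ bounding $L'$, and then assert that $S'$ ``can be arranged (by a final equidistribution step) so that its foot at each component of $L'$ is almost-$1$-shifted from the corresponding cuff foot of $\sqcup_i\Pi_i$.'' But the vanishing characterization alone gives no control on the feet of the bounding surface at its boundary; the claim that they ``can be arranged'' is precisely the content of the moreover clause itself, so as written this step is circular, and the paragraph that follows (acknowledging the geometric realization as ``the main obstacle'') does not resolve it.

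The paper handles this non-circularly by proceeding in the opposite direction. It first builds an arbitrary-footed bounding surface $S$ for $L$ and records its foot $\vec u_i$ on $l_i$; then for each $i$ it constructs an auxiliary tube $S_i$ with boundary $l_i\sqcup\bar l_i$ by gluing the ubiquitous, nearly-even-footed, boundaryless measure $\mu_0$ from Theorem 2.10 of [LM] while deliberately leaving exactly two cuffs unglued: those of a pair of pants $\Pi_{i,+}$ with cuff $l_i$ and foot close to $\vec v_i$, and of a pair of pants $\Pi_{i,-}$ with cuff $\bar l_i$ and foot close to the $(1+\pi i)$-shear of $\vec u_i$. Pasting $S$ to $S_i$ along $l_i=\bar l_i$ then has nearly unit shear by construction, and $S'=S\cup\bigcup_iS_i$ has boundary $L$ with the prescribed feet. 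You could repair your argument by producing the bounding surface for $L'$ directly with this same $\mu_0$-gluing trick --- leaving unglued the $L'$-cuffs of pants whose feet are matched to those of your $\Pi_i$ --- rather than first applying the vanishing characterization abstractly and then trying to retrofit the feet afterwards.
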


\begin{proof}
Since $L$ is null-homologous, $\Phi(L)\in H_1(SO(3);\mathbb{Z})\subset H_1(SO(M);\mathbb{Z})$. So $\sigma(L)$ is simply defined to be $\Phi(L)\in H_1(SO(3);\mathbb{Z})\cong \mathbb{Z}_2$. All the statements are clearly true by Theorem \ref{homology2}, except that $\sigma(L)=0$ implies that $L$ bounds an immersed compact oriented $(R,\epsilon)$-almost totally geodesic subsurface in $M$, and the moreover part.

Theorem \ref{homology2} implies that there exists $w\in \mathbb{Z}{\bold \Pi}_{R,\epsilon}$, such that $\partial w=L$. So $w$ defines an integer valued measure on ${\bold \Pi}_{R,\epsilon}$. By adding the ubiquitous, $(R,\frac{\epsilon}{2})$-nearly even footed, integer valued measure $\mu_0$ on ${\bold \Pi}_{R,\epsilon}$ in Theorem 2.10 of \cite{LM} (see also the definitions therein), $w+n\mu_0$ is still ubiquitous, $(R,\epsilon)$-nearly even footed for large enough positive integer $n$, and $\partial(w+n\mu_0)=L$. So $w+n\mu_0$ admits an $(R,\epsilon)$-nearly unit shearing gluing (satisfying $|s(C)-1|<\frac{\epsilon}{R}$), which gives an immersed $(R,\epsilon)$-almost totally geodesic surface $S$ in $M$ with $\partial S=L$. By Lemma 3.8 of \cite{LM}, each component of $S$ is $\pi_1$-injective.

For the moreover part, we will construct an immersed surface $S'\looparrowright M$ which contains the surface $S\looparrowright M$ we have constructed as a subsurface. Let $C_i$ be the boundary component of $S$ which is mapped to $l_i$, then we use $\Pi_i$ to denote the pair of pants in $S$ with $C_i$ as a cuff. Let $\vec{u}_i$ be one of the feet of $\Pi_i$ on $l_i$. Then for the ubiquitous, $(R,\frac{\epsilon}{2})$-nearly even footed, integer valued measure $\mu_0$ on ${\bold \Pi}_{R,\epsilon}$ in Theorem 2.10 of \cite{LM}, we will not use the gluing satisfying the $(R,\frac{\epsilon}{2})$-nearly unit shearing condition (which glues all the cuffs), but use some other gluing, we can get the desired subsurfaces of $S'$.

To be precise, let $\Pi_{i,+}$ and $\Pi_{i,-}$ be two pants in ${\bold \Pi}_{R,\epsilon}$, such that $l_i$ and $\bar{l}_i$ are the oriented boundary components of $\Pi_{i,+}$ and $\Pi_{i,-}$ respectively. Moreover, $\Pi_{i,+}$ and $\Pi_{i,-}$ also satisfy that, one of the feet of $\Pi_{i,+}$ on $l_i$ is $\frac{\epsilon}{R}$-close to $\vec{v}_i$, and one of the feet of $\Pi_{i,-}$ on $\bar{l}_i$ is $\frac{\epsilon}{R}$-close to the $1+\pi i$ shearing of $\vec{u}_i$. Then there exists an $(R,\epsilon)$-nearly unit shearing gluing of $\mu_0$, which glues all the cuffs of the pants given by $\mu_0$, except the two cuffs of $\Pi_{i,+}$ and $\Pi_{i,-}$ corresponding with $l_i$ and $\bar{l}_i$. This gluing gives an immersed $(R,\epsilon)$-almost totally geodesic subsurface $S_i$ in $M$, with two boundary components.

Now we paste $S$ and $\{S_i\}_{i=1}^n$ together, by pasting $\Pi_i$ and $\Pi_{i,-}$ along $l_i$, to get a new immersed surface $S'$. By the construction, we have that $S'$ is a compact oriented $\pi_1$-injective $(R,\epsilon)$-almost totally geodesic subsurface in $M$, such that $L$ is its oriented boundary, and one of the feet on $l_i$ is $\frac{\epsilon}{R}$-close to $v_i$. By throwing away all the closed surface components of $S'$, we can suppose all the components of $S'$ are surfaces with boundary.
\end{proof}

\begin{rem}
In \cite{LM}, Liu and Markovic were interested in realizing a prescribed second homology class $\alpha \in H_2(M,L;\mathbb{Z})$ by an immersed connected $(R,\epsilon)$-almost totally geodesic subsurface. However, they need to take some non-zero integer multiple of $\alpha$ to construct such a realization. In this paper, we only need to find an immersed $(R,\epsilon)$-almost totally geodesic subsurface bounded by $L$, but do not care much about its homology class. So taking an integer multiple of $L$ is not necessary.
\end{rem}

\begin{rem}
Actually, for all the statements in this section, the condition
$$
\left\{ \begin{array}{l}
        |\bold{hl}(C)-\frac{R}{2}|<\epsilon, \\
        |s(C)-1|<\frac{\epsilon}{R}
\end{array} \right.
$$
can always be replaced by
$$
\left\{ \begin{array}{l}
        |\bold{hl}(C)-\frac{R}{2}|<\frac{\epsilon}{R}, \\
        |s(C)-1|<\frac{\epsilon}{R^2},
\end{array} \right.
$$
while ${\bold \Gamma}_{R,\epsilon}$ and ${\bold \Pi}_{R,\epsilon}$ can also be replace by ${\bold \Gamma}_{R,\frac{\epsilon}{R}}$ and ${\bold \Pi}_{R,\frac{\epsilon}{R}}$ respectively, as we did in \cite{Su} (pointed out in \cite{Sa}). So when we apply the results in this section, we always suppose such an $\frac{1}{R}$ factor has been multiplied to $\epsilon$. The main reason that such a refinement is applicable is, the exponential mixing property of frame flow (\cite{Mo}, \cite{Po}) gives exponential mixing rate, which beats any polynomial rate.
\end{rem}

\section{Construction of Immersed $\pi_1$-injective $2$-complex and Virtual Domination}\label{construction1}

In this section, we will prove Theorem \ref{main}. The essential step is to construct an immersed $\pi_1$-injective $2$-complex in any closed hyperbolic $3$-manifold (Section \ref{construction2}). The topology and geometry of this $2$-complex is suggested by some nice orbifold handle structures of $M_0$ and $N$, which are described in Section \ref{handle}. To highlight the geometrical and topological idea, the proof of two technical results on geometric estimations are delayed to Section \ref{injectivity}.

\subsection{Orbifold Handle Structures of $M_0$ and $N$}\label{handle}

In \cite{Th1}, Thurston described a hyperbolic $3$-orbifold $M_0$ whose underlying space is $S^3$, and the singular set is the Borromean rings with indices $4$ (cone angle $\frac{\pi}{2}$). $M_0$ can be obtained as a quotient space of the cube by the following way. For each face of the cube, there is a dark segment drawn in this face as in Figure 1. Then the quotient relation on this face is given by the reflection along the arc, and $S^3$ is the quotient space of the cube by six such reflections. The six dark segments in Figure 1 correspond with the Borromean rings in $S^3$.

The rectangles in Figure 1 are actually combinatorial pentagons, and the combinatorial structure of the boundary of the cube in Figure 1 is isomorphic to the combinatorial structure of the boundary of the regular dodecahedron. Since the hyperbolic right-angled regular dodecahedron exists, it is easy to see that $M_0$ is a hyperbolic $3$-orbifold and $\pi_1(M_0)$ is commensurable with the reflection group of the hyperbolic right-angled regular dodecahedron.

\begin{center}
\includegraphics[width=3in]{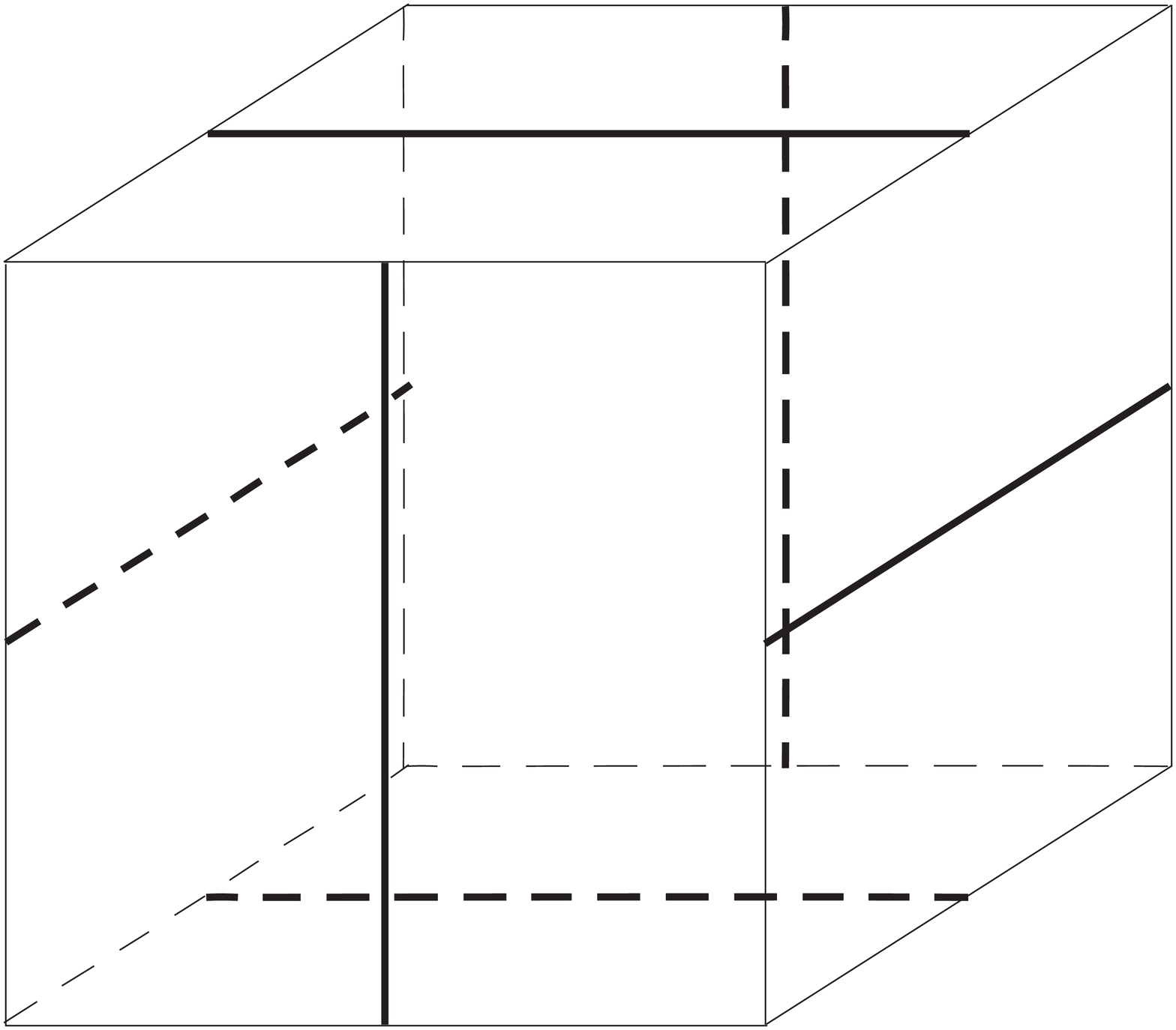}
\vskip 0.5 truecm
 \centerline{Figure 1}
\end{center}

Now we define the notion of orbifold handle structure.

\begin{defn}\label{ohs}
Suppose $P$ is a $3$-orbifold, whose underlying space is a closed $3$-manifold, and its singular set is a union of disjoint embedded circles. An {\it orbifold handle structure} of $P$ is a handle structure of the underlying space of $P$, such that each $0$- and $1$-handle does not intersect with the singular set, and each $2$- and $3$-handle intersects with the singular set at most along one arc.
\end{defn}

Note that, if $P'$ is a finite orbifold-cover of $P$, then an orbifold handle structure of $P$ lifts to an orbifold handle structure of $P'$.

In Figure 2, we show the $0$- and $1$-handles of an orbifold handle structure of $M_0$, here all the $0$- and $1$-handles do not intersect with the singular set.

\begin{center}
\includegraphics[width=3.8in]{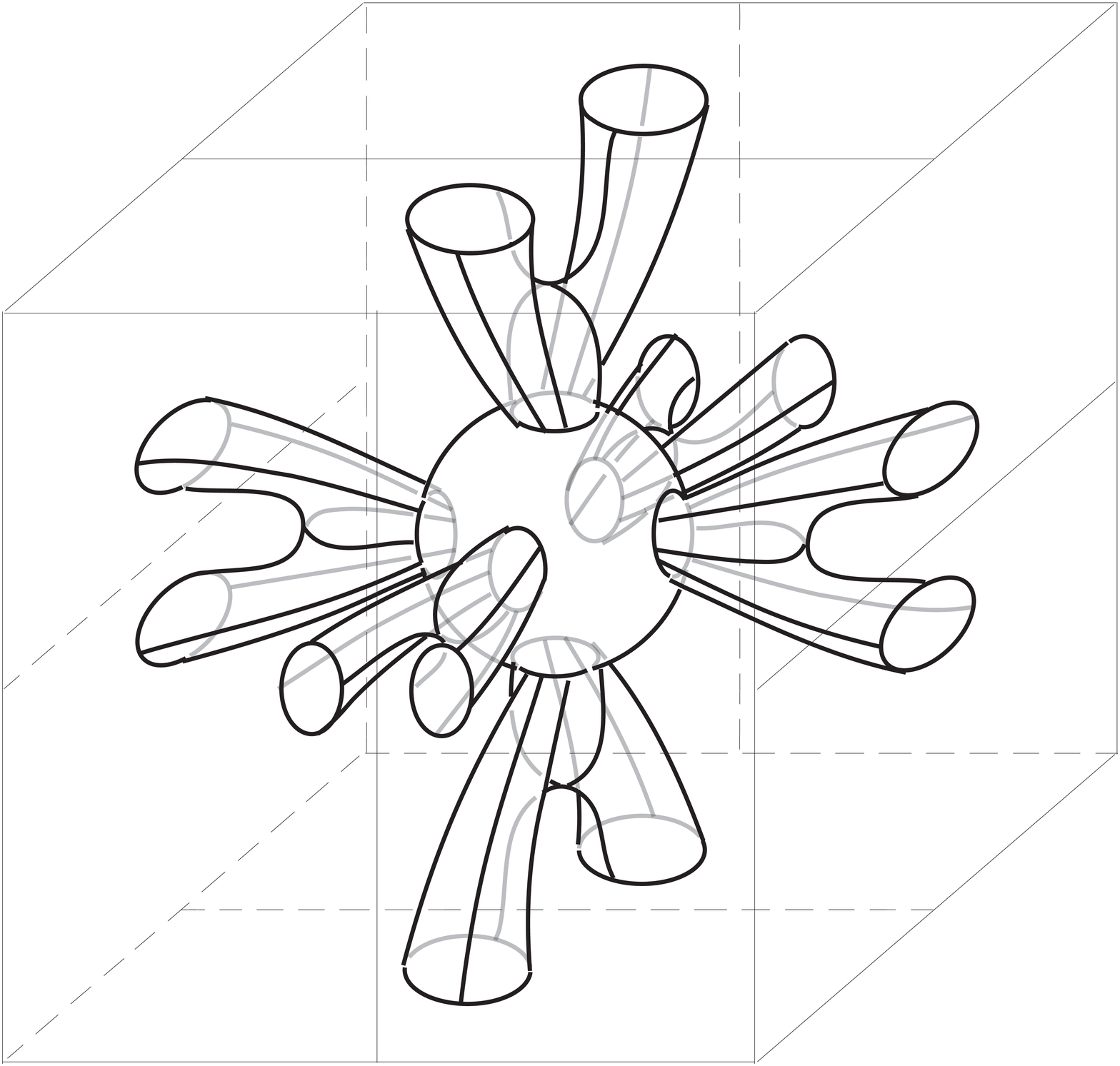}
\vskip 0.5 truecm
 \centerline{Figure 2}
\end{center}

In Figure 3, an $1$-dimensional subcomplex $T$ of $M_0^{(1)}$ is given (as union of red arcs). $T$ consists of one $0$-cell and six $1$-cells, and gives a generating set of $\pi_1(M_0)$. The generators correspond with the six oriented arcs $a,b,c,d,e,f$ in Figure 3, and it is easy to get a presentation of $\pi_1(M_0)$:
\begin{equation}\label{1}
 \pi_1(M_0)=\left \langle a,b,c,d,e,f\ \left| \begin{array}{l l}
adb^{-1}d^{-1}, acb^{-1}c^{-1}, eaf^{-1}a^{-1}, ebf^{-1}b^{-1}, ced^{-1}e^{-1}, \\cfd^{-1}f^{-1},
 a^4,b^4,c^4,d^4,e^4,f^4
 \end{array}\right. \right \rangle.
\end{equation}

Each relator in this group presentation corresponds with a $2$-handle in the orbifold handle structure of $M_0$. The first six relators correspond with six $2$-handles that do not intersect with the singular set. For the remaining six relators, each of them corresponds with a $2$-handle intersecting with the singular set at one arc. Under the hint of the group presentation \eqref{1}, the readers can try to figure out the position of the $2$-handles in Figure 2. We do not draw the whole picture here since it will be very messy if everything are shown together.

Given the $0$, $1$, and $2$-handles of $M_0$, it is easy to see that the orbifold handle structure has seven $3$-handles. One of the $3$-handle does intersect with the singular set. For the remaining six $3$-handles, each of them intersects with the singular set at one arc.

Figure 4 shows the Kirby diagram of the orbifold handle structure of $M_0$, as a handle structure of $S^3$. In Figure 4, the six letters $A,B,C,D,E,F$ and their reflections show how the corresponding $1$-handles are attached, or equivalently, how the pairs of disks are identified with each other (by reflection about horizontal or vertical lines). The letters $A,B,C,D,E,F$ in Figure 4 correspond with the six generators in the group presentation \eqref{1} of $\pi_1(M_0)$.

\begin{center}
\psfrag{a}[]{\color{red} $a$} \psfrag{b}[]{\color{red} $b$} \psfrag{c}[]{\color{red} $c$} \psfrag{d}[]{\color{red} $d$} \psfrag{e}[]{\color{red} $e$} \psfrag{f}[]{\color{red} $f$}
\includegraphics[width=3.8in]{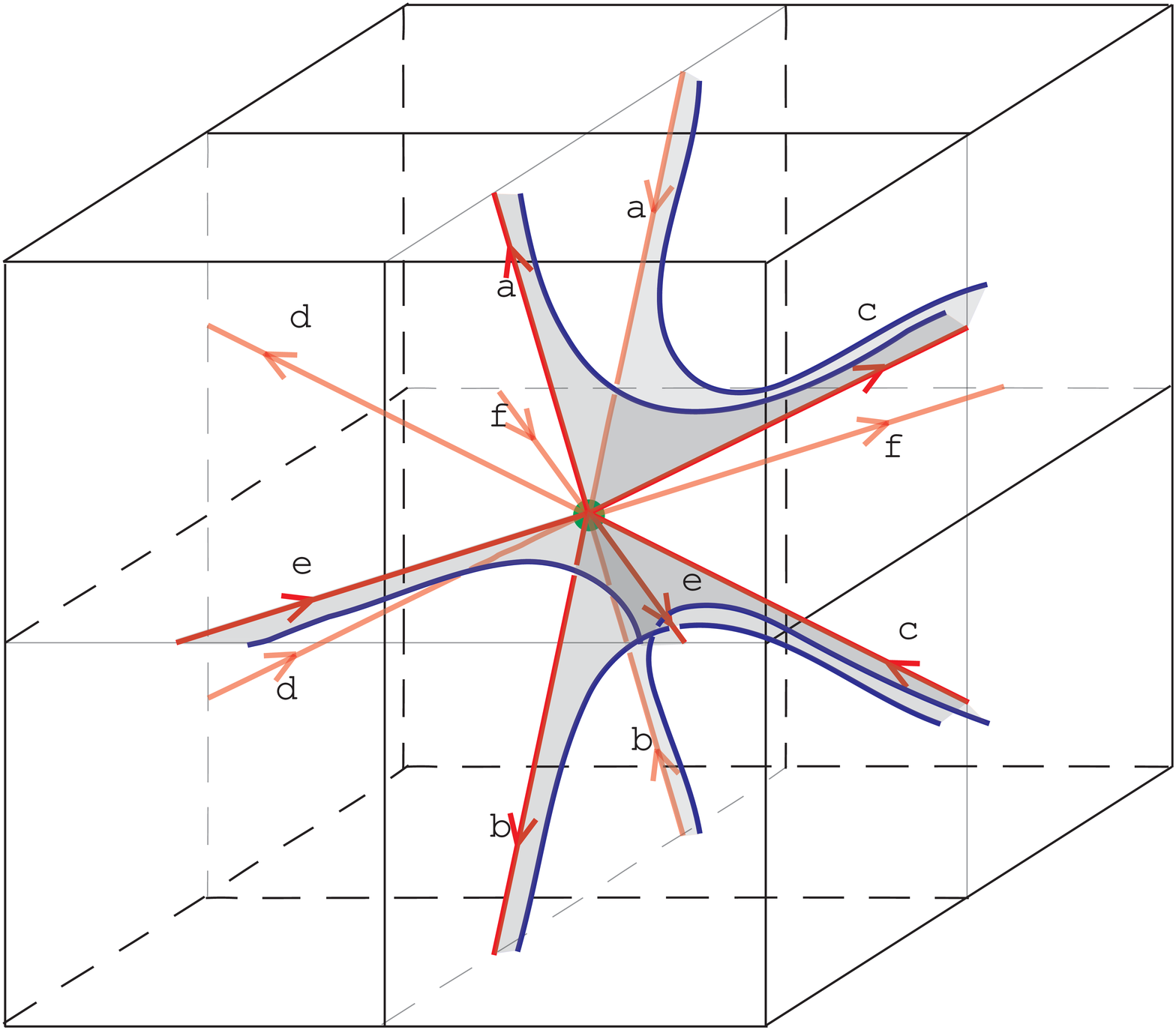}
\vskip 0.5 truecm
 \centerline{Figure 3}
\end{center}

\begin{center}
\psfrag{a}[]{$A$} \psfrag{b}[]{$B$} \psfrag{c}[]{$C$} \psfrag{d}[]{$D$} \psfrag{e}[]{$E$} \psfrag{f}[]{$F$}
\psfrag{A}[]{\rotatebox{180}{\reflectbox{$A$}}} \psfrag{B}[]{\rotatebox{180}{\reflectbox{$B$}}} \psfrag{C}[]{\reflectbox{$C$}} \psfrag{D}[]{\reflectbox{$D$}} \psfrag{E}[]{\reflectbox{$E$}} \psfrag{F}[]{\reflectbox{$F$}}
\includegraphics[width=3.6in]{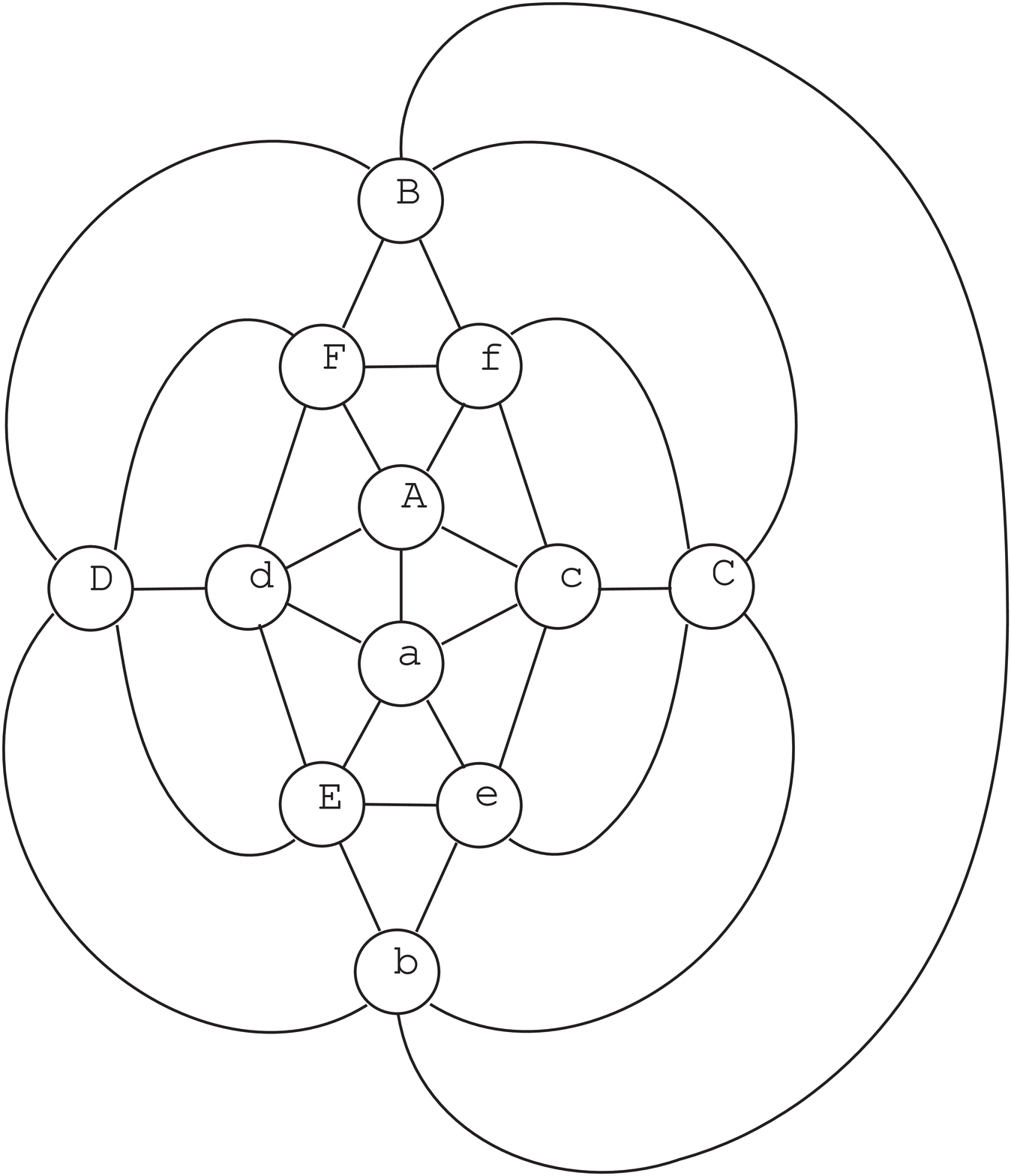}
\vskip 0.5 truecm
 \centerline{Figure 4}
\end{center}

From Figure 4, we can see that the orbifold handle structure (group presentation \eqref{1}) of $M_0$ is closely related with the geometry of the regular icosahedron. The $0$-handle corresponds with the center of the regular icosahedron. The six $1$-handles give twelve vectors at the center, which correspond with the twelve vertices of the regular icosahedron. The twelve $2$-handles correspond with the thirty edges on the boundary of the regular icosahedron. For the first six $2$-handles (correspond with the first six relators in the group presentation \eqref{1}), each of them corresponds with four edges. For the remaining six $2$-handles, each of them corresponds with one edge, since the corresponding $2$-handle intersects with the singular set (index $4$) along one arc.

Now we construct a $2$-dimensional subcomplex $X$ in $M_0$, which is a deformation retract of $M_0^{(2)}$. It is the union of $T$ and twelve topological discs which are the cores of the corresponding twelve $2$-handles of $M_0$ (with boundary as concatenation of arcs in $T$). Six of these discs are $4$-gons which do not intersect with the singular set. The remaining six are monogons, and each of them intersects with the singular set at one point. For the convenience of our further construction, we subdivide each topological disc to be the union of a cornered annulus and a round disc, such that the cornered annulus lies in $M_0^{(1)}$, and the round disc lies in a $2$-handle of $M_0$. In Figure 3, the shaded part corresponds with two such cornered annuli in $M_0$. One of them is a four-cornered annulus which corresponds with the relator $acb^{-1}c^{-1}$, and the other component is a one-cornered annulus which corresponds with the relator $e^4$. We still use $X$ to denote this $2$-complex with the refined combinatorial structure, and use $X'$ to denote the intersection of $X$ and $M_0^{(1)}$ (which excludes all the round discs). Although $X$ is not a genuine $2$-complex, it is easy to subdivide it to get a $2$-complex, so we will simply call $X$ a $2$-complex. The $0$-cell and $1$-cells in $T\subset X$ will still be called $0$-cell and $1$-cells. The round discs will be called $2$-cells, and the intersection of $2$-cells and cornered-annuli will be called circles.

In summary, $X\subset M_0$ is a $2$-complex consists of one $0$-cell, six $1$-cells, twelve circles, twelve $2$-cells (six of them intersect with the singular set), six four-cornered annuli and six one-cornered annuli. In $X'$, those twelve $2$-cells are excluded. In Figure 5, we show the picture of the four-cornered annulus and one-cornered annulus, for convenience of the readers.

\begin{center}
\includegraphics[width=3.5in]{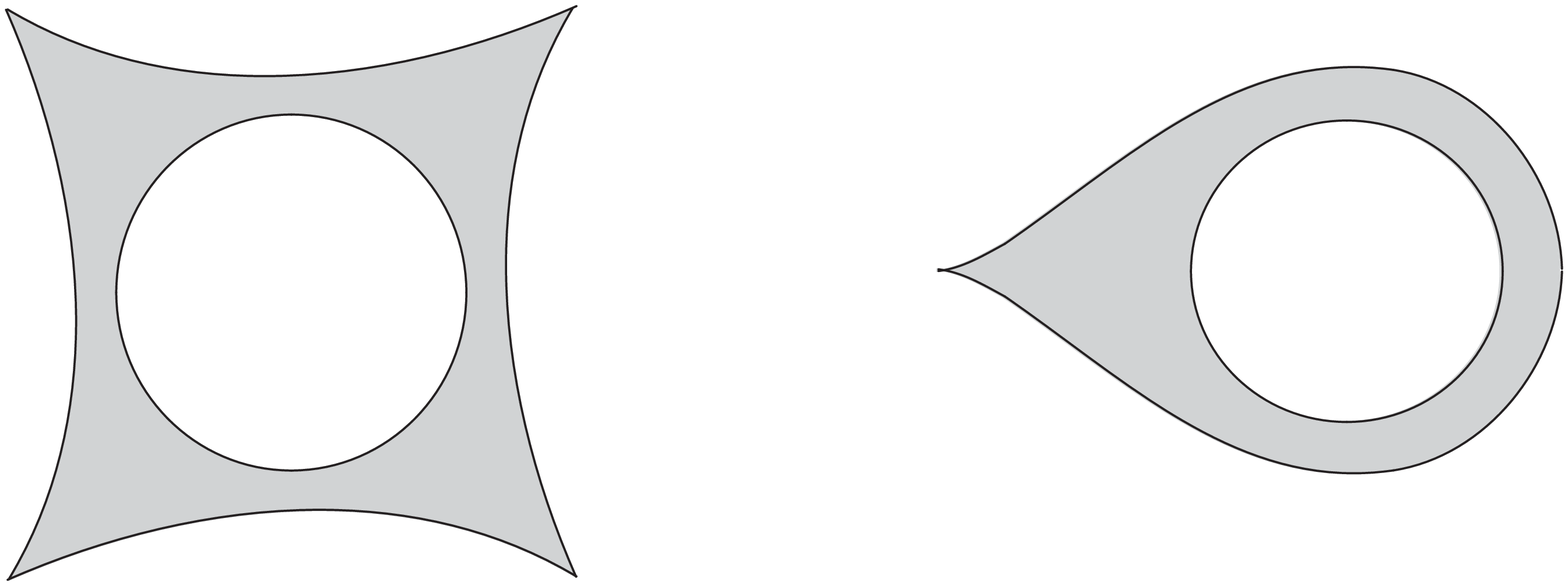}
\vskip 0.5 truecm
 \centerline{Figure 5}
\end{center}

Theorem \ref{HLMW} can be rephrased to the following statement. For any closed oriented $3$-manifold $N$, $M_0$ admits a finite orbifold cover $M_N$, whose underlying space is homeomorphic to $N$ with respect to their orientations. So the orbifold handle structure of $M_0$ lifts to an orbifold handle structure of $M_N$, then induces a handle structure of $N$. We may abuse the notation between $M_N$ and $N$, when the orbifold structure is not important in the context.

Suppose $M_N\rightarrow M_0$ is a $d$-sheet orbifold cover, then the induced handle structure of $N$ has $d$ $0$-handles, $6d$ $1$-handles and $k$ $2$-handles. Here the relation between $d$ and $k$ depends on the geometric behavior of the orbifold covering map $M_N\rightarrow M_0$ along the singular set. Actually, $\frac{15}{2}d\leq k\leq12d$ holds. Those six $2$-handles of $M_0$ that do not intersect with the singular set lift to $6d$ $2$-handles of $N$. For each of those six $2$-handles intersecting with the singular set, the number of the components of its preimage is between $\frac{d}{4}$ and $d$.

Let $Y$ be the preimage of $X$ in $N$, then the finite branched cover $Y\rightarrow X$ is also degree $d$. $Y$ has $d$ $0$-cells (correspond with $0$-handles of $N$), $6d$ $1$-cells (correspond with $1$-handles of $N$), $k$ circles, $k$ $2$-cells and $k$ cornered annuli (correspond with $2$-handles of $N$). Here the cornered annuli might be four-cornered, two-cornered or one-cornered, depending on the branched index. Moreover, all of these three possibilities do happen, by Theorem 1.1 of \cite{HLMW}.

Here $Y$ is a deformation retract of $N^{(2)}$. Let $Y'$ be the intersection of $Y$ with $N^{(1)}$, which excludes all the $2$-cells in $Y$, then $Y'\cap \partial N^{(1)}$ is the disjoint union of $k$ circles where the $2$-handles of $N$ are attached.

\subsection{Construction of Immersed $\pi_1$-injective $2$-complex}\label{construction2}
In this section, for any closed hyperbolic $3$-manifold $M$, we will construct an immersed $\pi_1$-injective $2$-complex $j:Z\looparrowright M$, where the $2$-complex $Z$ contains two copies of $Y'$ as its subcomplex.

In the following part of this paper, all the tangent vectors will be unit vectors, if there is no specific description. For any point $p\in M$, we use $T_p^1M$ to denote the set of all unit tangent vectors at $p$. For any two vectors $\vec{v}_1,\vec{v}_2 \in T_p^1M$ for some $p\in M$, we use $\Theta(\vec{v}_1,\vec{v}_2)\in [0,\pi]$ to denote the angle between $\vec{v}_1$ and $\vec{v}_2$.

At first, we need to introduce the {\it connection principle} (see Lemma 4.15 in \cite{LM}). For technical reason, we use a slightly different statement from Lemma 4.15 of \cite{LM}. It follows from the exponential mixing property of frame flow (\cite{Mo}, \cite{Po}), see \cite{Sa}.

\begin{lem}\label{connection} (Connection Principle)
For any closed hyperbolic $3$-manifold $M$ and any small number $0<\delta<1$, there exists a constant $L_0(\delta,M)>0$, such that the following statement holds. Let $\vec{t}_p,\vec{n}_p\in T_p^1M$ and $\vec{t}_q,\vec{n}_q\in T_q^1M$ be two pairs of unit orthonormal vectors at $p,q\in M$ respectively. For any real number $L>L_0(\delta,M)$, there exists an oriented geodesic arc $\gamma$ from $p$ to $q$, such that the following conditions hold.
\begin{itemize}
\item The initial and terminal tangent vectors of $\gamma$ at $p$ and $q$ are $\frac{\delta}{L}$-close to $\vec{t}_p$ and $\vec{t}_q$ respectively.
\item The length of $\gamma$ is $\frac{\delta}{L}$-close to $L$.
\item The angle between $\vec{n}_q$ and the parallel transportation of $\vec{n}_p$ to $q$ along $\gamma$ is $\frac{\delta}{L}$-close to $0$.
\end{itemize}
\end{lem}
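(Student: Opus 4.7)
The plan is to lift the problem to the oriented orthonormal frame bundle $SO(M)$ and reduce it to the exponential mixing of the frame flow. Extend the given data to positively oriented orthonormal frames $F_p=(\vec t_p,\vec n_p,\vec t_p\times\vec n_p)$ and $F_q=(\vec t_q,\vec n_q,\vec t_q\times\vec n_q)$ in $SO(M)$, and let $\phi_t:SO(M)\to SO(M)$ denote the frame flow, which moves $F$ by parallel transporting its three legs along the geodesic generated by the first leg. The three bulleted conclusions of the lemma, taken together, amount to the single statement that $\phi_t(F_p)$ lies within distance $O(\delta/L)$ of $F_q$ (in an auxiliary Riemannian metric on $SO(M)$) for some $t$ with $|t-L|<\delta/L$.

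To produce such a $t$, I would run a standard correlation argument. Take smooth non-negative bump functions $\rho_p,\rho_q$ on $SO(M)$ supported in balls of radius $r\sim\delta/L$ around $F_p$ and $F_q$, chosen so that $\int\rho_*\asymp r^{\dim SO(M)}$ and $\|\rho_*\|_{C^k}\lesssim r^{-k}$. The results of Moore and Pollicott give constants $C=C(M),\alpha=\alpha(M)>0$ and an integer $k$ such that
\[
\left|\int_{SO(M)}\rho_q(\phi_t F)\,\rho_p(F)\,dF-\frac{1}{\mathrm{vol}(SO(M))}\Bigl(\int\rho_p\Bigr)\Bigl(\int\rho_q\Bigr)\right|\le Ce^{-\alpha t}\,\|\rho_p\|_{C^k}\,\|\rho_q\|_{C^k}.
\]
The main term has size $\asymp r^{2\dim SO(M)}$ while the error is at most $Ce^{-\alpha t}r^{-2k}$. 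Since the exponential decay $e^{-\alpha t}$ outstrips any polynomial in $L/\delta$, once $L$ exceeds a threshold $L_0(\delta,M)$ the main term dominates the error for every $t$ in the window $(L-\delta/L,\,L+\delta/L)$, forcing the correlation integral to be strictly positive. Positivity yields some frame $F$ in the support of $\rho_p$ whose time-$t$ flow image $\phi_t F$ lies in the support of $\rho_q$, and the underlying geodesic segment of this orbit satisfies the three quantitative estimates.

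What remains is the technicality that the arc just produced issues from a basepoint near $p$ and arrives at a basepoint near $q$, rather than from $p$ and $q$ exactly. This is handled by a standard small adjustment along the local basepoint, stable and unstable directions of the frame flow (as in the proof of Lemma 4.15 of \cite{LM} and the refinement of \cite{Sa}): a perturbation of $F$ of size $O(\delta/L)$ reanchors the initial basepoint to $p$, a matching perturbation near the other endpoint reanchors the terminal basepoint to $q$, and all three estimates survive up to multiplicative constants that can be absorbed by shrinking $\delta$. The main obstacle throughout the plan is precisely the bookkeeping that the polynomial $C^k$-norm blow-up of bump functions at scale $\delta/L$ is genuinely beaten by the exponential mixing rate of the frame flow; this is the mechanism by which a quantitative connection principle at such a fine scale is forced to require $L$ to exceed a threshold depending on $\delta$ and $M$.
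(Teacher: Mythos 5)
Your proposal correctly reconstructs the standard exponential-mixing argument; the paper itself gives no proof of this lemma, stating only that it follows from the exponential mixing of the frame flow (\cite{Mo}, \cite{Po}) and deferring to \cite{Sa} and \cite{LM}, so your sketch is in fact filling in the very argument the paper cites. The key quantitative point you isolate, that the exponential decay rate in the correlation estimate beats the polynomial growth of the $C^k$-norms of bump functions at scale $\delta/L$, is precisely what the paper flags in Remark 2.6 as the reason the $\frac{1}{R}$-refined precision is attainable, so you and the paper are aligned in both the mechanism and the route.
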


\begin{rem}\label{log1}
We also assume that $10000 L_0(\delta,M)e^{-\frac{L_0(\delta,M)}{16}}<\delta$ and $L_0(\delta,M)>10000$ hold, for the convenience of further estimations.
\end{rem}

Now we can construct null-homologous closed geodesic arcs with a base point in any hyperbolic $3$-manifold, which is a basic step of our construction.

\begin{lem}\label{nullhomologous}
For any small number $0<\delta<1$, there exists $L_1(\delta,M)>0$, such that for any real number $L>L_1(\delta,M)$, the following statement holds. Let $p$ be a point in $M$, $\vec{v}_1,\vec{v}_2$ be two unit vectors in $T_p^1 M$, and $\vec{n}\in T_p^1 M$ be another unit vector orthogonal with both $\vec{v}_1$ and $\vec{v}_2$. Then there exists an oriented geodesic arc $\gamma$ based at $p$ such that the following conditions hold.
\begin{itemize}
\item The initial and terminal tangent vectors of $\gamma$ at $p$ are $\frac{\delta}{L}$-close to $\vec{v}_1$ and $\vec{v}_2$ respectively.
\item The length of $\gamma$ is $\frac{\delta}{L}$-close to $L$.
\item The angle between $\vec{n}$ and the parallel transportation of $\vec{n}$ to $p$ along $\gamma$ is $\frac{\delta}{L}$-close to $0$.
\item As a closed curve in $M$, $[\gamma]=0\in H_1(M;\mathbb{Z})$.
\end{itemize}
\end{lem}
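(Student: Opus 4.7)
The plan is to first assemble a piecewise geodesic loop at $p$ satisfying all four conditions (tangent, length, parallel transport, and null-homology), then replace it by a single geodesic arc in its based homotopy class. The piecewise loop is built by iterating Lemma \ref{connection}, and the null-homology condition is imposed by inserting traversals of a fixed family of generating closed geodesics with suitably chosen integer multiplicities.

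\textbf{Piecewise construction.} Pick oriented closed geodesics $\beta_1,\ldots,\beta_r\subset M$ whose classes generate the finitely generated abelian group $H_1(M;\mathbb{Z})$, and fix a reference orthonormal pair $(\vec{t}_i,\vec{n}_i)$ at a point $q_i\in\beta_i$ with $\vec{t}_i$ tangent to $\beta_i$. Given integer multiplicities $n_1,\ldots,n_r$ to be chosen, apply Lemma \ref{connection} $r+1$ times to produce geodesic arcs $A_0,\ldots,A_r$ linking, in sequence, the frames at $p$, $q_1$, $q_2$, $\ldots$, $q_r$, $p$, where at each $q_i$ we use $(\vec{t}_i,\vec{n}_i)$ on the incoming side and $(\vec{t}_i,R_i^{n_i}\vec{n}_i)$ on the outgoing side (with $R_i^{n_i}$ the normal-plane rotation from $n_i$ traversals of $\beta_i$), and at the final $p$ we prescribe $(\vec{v}_2,\vec{n})$. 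By specifying $\vec{n}$ as the terminal normal at $p$, the parallel-transport clauses of Lemma \ref{connection} automatically give that the net parallel transport of $\vec{n}$ along $\gamma=A_0\,\beta_1^{n_1}\,A_1\cdots\beta_r^{n_r}\,A_r$ is $\delta/L$-close to $\vec{n}$; the tangents at $p$ are likewise $\delta/L$-close to $\vec{v}_1,\vec{v}_2$. The class of $\gamma$ in $H_1(M;\mathbb{Z})$ equals $\sum_i n_i[\beta_i]+c_A$, where $c_A\in H_1(M;\mathbb{Z})$ is the class of the skeleton loop $A_0\cdot A_1\cdots A_r$ (independent of the $n_i$). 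Solve $\sum n_i[\beta_i]=-c_A$ for integer $n_i$, and simultaneously pick connecting lengths $L_j\geq L_0(\delta,M)$ so that $\sum L_j+\sum n_i|\beta_i|$ matches $L$.

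\textbf{Straightening and the main obstacle.} The loop $\gamma$ above is only piecewise geodesic, with kinks of angle $\leq\delta/L$ at the junctions $q_i$. To produce a single geodesic arc, lift $\gamma$ to $\mathbb{H}^3$ and replace it by the straight geodesic between the two lifts of $p$ determined by the $\pi_1$-class of $\gamma$. The Morse lemma for quasi-geodesics in negative curvature, together with the exponential convergence of nearby geodesic rays in $\mathbb{H}^3$, shows that the straightened arc inherits the tangent and parallel-transport conditions up to exponentially small (in $L$) corrections and carries the same (zero) homology class. The main obstacle is the $\delta/L$ tolerance on length after straightening: the straight geodesic's length can be shorter than $L$ by $O(\log L)$, so the initial target length for $\gamma$ must be overshot by a matching amount (possibly iterating to self-correct). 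This quantitative straightening, and the bookkeeping of parallel transport around each $\beta_i^{n_i}$, are the delicate parts of the proof, relying on the exponential decay rates encoded in Remark \ref{log1}.
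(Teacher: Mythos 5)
Your proposal correctly identifies the main ingredients — the connection principle, the need to straighten a piecewise geodesic loop, the inefficiency correction — but it misses the key simplifying idea the paper uses, and as stated has a genuine gap.

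The missing idea is to build $\gamma$ as a \emph{commutator}. The paper applies Lemma~\ref{connection} twice to produce two geodesic loops $\alpha,\beta$ based at $p$ (with carefully prescribed initial/terminal tangent directions $\vec{v}_1,\vec{v}_1',-\vec{v}_2,\vec{v}_2'$ all orthogonal to $\vec{n}$, and lengths about $L/4$ with a correction term $\sum\log(\sec(\theta_i/2))$ built into $\beta$), and then takes $\gamma$ to be the geodesic arc rel $p$ homotopic to $\alpha\beta\bar\alpha\bar\beta$. Since $[\gamma]=[\alpha]+[\beta]-[\alpha]-[\beta]=0$, null-homology is automatic, with no bookkeeping of homology classes at all. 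The straightening length loss is then controlled \emph{exactly}: the three corners of $\alpha\beta\bar\alpha\bar\beta$ have exterior angles $\theta_1,\theta_2,\theta_3$ (by the auxiliary vector choice), and the total inefficiency $\sum I(\theta_i)=2\sum\log(\sec(\theta_i/2))$ is precisely the amount by which $\beta$ is lengthened, so the straightened arc has length $\delta/L$-close to $L$ via Lemma 4.8 of \cite{LM} and Remark~\ref{log1} — no iteration or self-correction is needed.

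The gap in your version is the homology bookkeeping. Lemma~\ref{connection} is a pure existence statement: it gives no control whatsoever on the class $c_A\in H_1(M;\mathbb{Z})$ of the skeleton loop $A_0A_1\cdots A_r$. Consequently you cannot freely ``solve $\sum n_i[\beta_i]=-c_A$ and simultaneously pick the $L_j$ so that $\sum L_j+\sum n_i|\beta_i|$ matches $L$'': the integers $n_i$ depend on $c_A$, $c_A$ depends on which arcs the connection principle happens to produce at the chosen lengths, and when you then adjust the lengths $L_j$ to compensate you have no reason to get the same $c_A$ back. An arc of length $\sim L$ in a compact $M$ can carry a homology class of magnitude $O(L)$, so the $n_i$, and hence the adjustment $\sum n_i|\beta_i|$, can be comparable to $L$ itself — there is no small perturbation argument available to close the loop. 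The commutator construction eliminates this circularity entirely, which is why the paper uses it. (Your ``straightening loses $O(\log L)$'' concern is real but easily handled by the explicit inefficiency correction above, not a serious obstacle on its own.)
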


\begin{proof}
Choose two auxiliary unit vectors $\vec{v}_1',\vec{v}_2'\in T_p^1M$ orthogonal with $\vec{n}$, such that the angles $\theta_1=\Theta(\vec{v}_1,\vec{v}_2')$, $\theta_2=\Theta(\vec{v}_2',-\vec{v}_1')$ and $\theta_3=\Theta(-\vec{v}_1',\vec{v}_2)$ are all in $[0,\frac{\pi}{3}]$.

Now we take $L_1(\delta,M)=4L_0(\frac{\delta}{100},M)$. By applying the connection principle (Lemma \ref{connection}), we get two oriented geodesics arcs $\alpha$ and $\beta$ in $M$ based at $p$ such that the following conditions hold.
\begin{itemize}
\item The initial and terminal tangent vectors of $\alpha$ are $\frac{\delta}{100}/\frac{L}{4}=\frac{\delta}{25L}$-close to $\vec{v}_1$ and $\vec{v}_1'$ respectively, while the initial and terminal tangent vectors of $\beta$ are $\frac{\delta}{25L}$-close to $-\vec{v}_2$ and $\vec{v}_2'$ respectively.
\item The length of $\alpha$ and $\beta$ are $\frac{\delta}{25L}$-close to $\frac{L}{4}$ and $\frac{L}{4}+\sum_{i=1}^3\log{(\sec{\frac{\theta_i}{2}})}$ respectively.
\item The angle between $\vec{n}$ and the parallel transportation of $\vec{n}$ to $p$ along $\alpha$ is $\frac{\delta}{25L}$-close to $0$, and so does the angle between $\vec{n}$ and the parallel transportation of $\vec{n}$ to $p$ along $\beta$.
\end{itemize}

Here $\log{(\sec{\frac{\theta}{2}})}$ shows up since $I(\theta)=2\log{(\sec{\frac{\theta}{2}})}$ is the inefficiency constant given by exterior angle $\theta$ (see \cite{KM2} Section 4.1).

Let $\gamma$ be the geodesic arc homotopic to $\alpha \beta \bar{\alpha}\bar{\beta}$ with respect to the base point $p$. By applying the estimations in Lemma 4.8 of \cite{LM} and Remark \ref{log1}, we have that $\gamma$ satisfies the first three conditions in the statement. The fourth condition clearly holds since $\gamma$ is a commutator in $\pi_1(M,p)$.
\end{proof}

\begin{rem}\label{log2}
By Remark \ref{log1}, it is easy to check that $10000 L_1(\delta,M)e^{-\frac{L_1(\delta,M)}{64}}<\delta$ and $L_1(\delta,M)>10^4$ holds.
\end{rem}

Let $\theta_0=\arcsin{\sqrt{\frac{5-\sqrt{5}}{10}}}$, then $\theta_0\approx 0.1762\pi \approx 31.7175^{\circ}$. Let $\vec{n}_1$ and $\vec{n}_2$ be two normal vectors of two adjacent faces of the Euclidean regular dodecahedron (pointing outside), then the angle between $\vec{n}_1$ and $\vec{n}_2$ equals $2\theta_0$.

Now we can do the first step our construction. For any closed oriented hyperbolic $3$-manifold $M$, take an arbitrary point $p\in M$, we choose an orthonormal frame $(\vec{e}_1,\vec{e}_2,\vec{e}_3)$ in $T_pM$ such that $(\vec{e}_1,\vec{e}_2,\vec{e}_3)$ coincide with the orientation of $M$ (i.e. $\vec{e}_3=\vec{e}_1\times \vec{e}_2$). Then for any $\vec{v}\in T_pM$, $\vec{v}$ can be written as a linear combination of $\vec{e}_1$, $\vec{e}_2$ and $\vec{e}_3$, as $\vec{v}=v_1\vec{e}_1+v_2\vec{e}_2+v_3\vec{e}_3$. In this case, we will use $\vec{v}=(v_1,v_2,v_3)$ to denote the coordinate of $\vec{v}$.

Let
$$\begin{array}{l l}
\vec{v}_a=(\sin{\theta_0},0,\cos{\theta_0}), & \vec{v}_b=(\sin{\theta_0},0,-\cos{\theta_0}),\\
\vec{v}_c=(0,\cos{\theta_0},\sin{\theta_0}), & \vec{v}_d=(0,-\cos{\theta_0},\sin{\theta_0}),\\
\vec{v}_e=(\cos{\theta_0},\sin{\theta_0},0), & \vec{v}_f=(-\cos{\theta_0},\sin{\theta_0},0)
\end{array}$$
be six unit vectors in $T_p^1M$. These six vectors together with the negative vectors of them form the twelve normal vectors of the twelve faces of an Euclidean regular dodecahedron in $T_pM$. Figure 3 shows a picture of these vectors, with $\vec{e}_1$ normal to the front face of the cube, $\vec{e}_2$ normal to the right face, and $\vec{e}_3$ normal to the top face, all these three vectors are pointing outside of the cube.

Now we apply Lemma \ref{nullhomologous} to construct six closed oriented geodesic arcs $a,b,c,d,e,f$ based at $p$. They correspond with the six generators in the group presentation \eqref{1} (we abuse the notation between generators and geodesic arcs based at $p$). For our application of Lemma \ref{nullhomologous}, the constant $\delta>0$ will be some very small number and $L>L_1(\delta,M)$ will be some very large number which will be determined later. These six geodesic arcs are constructed according to the data in Table 1. Here $\vec{v}_1$, $\vec{v}_2$ and $\vec{n}$ in Table 1 are the input data of Lemma \ref{nullhomologous}, and geodesic arcs $a,b,c,d,e,f$ are the output data. $\vec{n}$ is called an {\it almost normal vector} of the corresponding geodesic arc at its initial and terminal point.

\begin{table}[!h]
\begin{center}
\begin{tabular}{|c|c|c|c|}
  \hline
  geodesic arc & $\vec{v}_1$ & $\vec{v}_2$ & $\vec{n}$\\
   \hline
  $a$ & $\vec{v}_a$ & $\vec{v}_b$ & $(0,1,0)$\\
  \hline
  $b$ & $\vec{v}_b$ & $\vec{v}_a$ & $(0,1,0)$\\
  \hline
  $c$ & $\vec{v}_c$ & $\vec{v}_d$ & $(1,0,0)$\\
   \hline
  $d$ & $\vec{v}_d$ & $\vec{v}_c$ & $(1,0,0)$\\
  \hline
  $e$ & $\vec{v}_e$ & $\vec{v}_f$ & $(0,0,1)$\\
  \hline
  $f$ & $\vec{v}_f$ & $\vec{v}_e$ & $(0,0,1)$\\
  \hline
\end{tabular}
\end{center}
\caption{Construction of geodesic arcs.}
\end{table}

Given this construction, we have the following estimation for closed geodesics in $M$. These closed geodesics are related with the words in the group presentation \eqref{1}.

\begin{lem}\label{Repsilon}
For any positive integer $n$, the concatenation of $n$ geodesic arcs corresponding with $a^n$ is homotopic to a null-homologous closed geodesic in $M$, with complex length $\frac{25n\delta}{L}$-close to $n(L-2\log{\csc{\theta_0}})$. The same statement also holds for $b,c,d,e,f$.

For each of the twelve relators in the group presentation \eqref{1}, the concatenation of the four corresponding geodesic arcs is homotopic to a null-homologous closed geodesic in $M$, with complex length $\frac{100\delta}{L}$-close to $4(L-2\log{\csc{\theta_0}})$.
\end{lem}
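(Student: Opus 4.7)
The plan is to apply the broken-geodesic straightening estimate (Lemma 4.8 of \cite{LM}, iterated) to each concatenation, after first reading off the ``ideal'' exterior angle at every corner from the dodecahedral data in Table 1 and the presentation \eqref{1}. The main observation, which is really the heart of the lemma, is that the twelve vectors $\{\pm\vec v_a,\pm\vec v_b,\pm\vec v_c,\pm\vec v_d,\pm\vec v_e,\pm\vec v_f\}\subset T_p^1M$ are the vertices of a regular icosahedron, so any two distinct non-antipodal ones make an angle whose cosine is $\pm 1/\sqrt{5}$, i.e.\ either $2\theta_0$ or $\pi-2\theta_0$. A finite case-check across the twelve relators in \eqref{1} will show that at every corner the ideal arriving tangent $\vec v_{\mathrm{out}}$ and ideal departing tangent $\vec v_{\mathrm{in}}$ of the next arc satisfy $\vec v_{\mathrm{out}}\cdot \vec v_{\mathrm{in}}=-1/\sqrt 5$, so the ideal exterior angle is always $\pi-2\theta_0$. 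For example, at a corner of $a^n$ we have $\vec v_b\cdot \vec v_a=\sin^2\theta_0-\cos^2\theta_0=-\cos 2\theta_0=-1/\sqrt 5$, and at the corner of $adb^{-1}d^{-1}$ between $a$ and $d$ we have $\vec v_b\cdot \vec v_d=-\cos\theta_0\sin\theta_0=-\tfrac{1}{2}\sin 2\theta_0=-1/\sqrt 5$.

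Given this, the per-corner inefficiency is
\[
I(\pi-2\theta_0)=2\log\sec\!\left(\tfrac{\pi}{2}-\theta_0\right)=2\log\csc\theta_0
\]
by the Kahn--Markovic formula $I(\theta)=2\log\sec(\theta/2)$. The first two conditions of Lemma \ref{nullhomologous} say that each actual initial/terminal tangent is $\delta/L$-close to its Table 1 ideal and that each actual arc length is $\delta/L$-close to $L$; since $I$ is smooth near $\pi-2\theta_0$, each actual per-corner inefficiency is $O(\delta/L)$-close to $2\log\csc\theta_0$. Iterating Lemma 4.8 of \cite{LM} $n-1$ times (using Remark \ref{log2} to absorb the exponentially small tangent-tilt corrections at each step) produces a closed geodesic homotopic to the concatenation whose real length is $O(n\delta/L)$-close to $n\cdot L-n\cdot 2\log\csc\theta_0=n(L-2\log\csc\theta_0)$. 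The explicit constant $25$ in the statement (and $100$ in the four-letter case) is a conservative bookkeeping slack absorbing the $n$ arc-length errors, the $n$ corner-inefficiency errors, and the straightening residuals.

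For the imaginary part of the complex length, the third condition of Lemma \ref{nullhomologous} keeps the parallel transport of the chosen almost-normal vector along each individual arc within $\delta/L$ of identity, and Table 1 is arranged so that the ideal turning planes at the corners are consistent with the choices of $\vec n$, so that carrying a transverse frame around the whole loop produces total holonomy $O(n\delta/L)$, which fits inside the stated tolerance. Null-homology is automatic: each arc $a,\ldots,f$ is null-homologous by the fourth condition of Lemma \ref{nullhomologous}, so any concatenation of them is too. The second assertion of the lemma is the same argument with $n=4$, yielding the error bound $100\delta/L$. The main obstacle is Step~1, namely the combinatorial verification that \emph{every} corner of \emph{every} relator in \eqref{1} produces the angle $\pi-2\theta_0$ rather than $2\theta_0$; this is precisely the compatibility between the dodecahedral orbifold structure on $M_0$ (which dictates the relators) and the dodecahedral frame chosen in $T_p^1M$, and if even one corner were off by $2\theta_0$ the entire length estimate would collapse.
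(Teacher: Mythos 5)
Your exterior-angle bookkeeping agrees with the paper's real-length computation, and the observation that all the relevant dot products equal $-1/\sqrt{5}=-\cos 2\theta_0$ (so every corner turns by $\pi-2\theta_0$ and contributes inefficiency $2\log\csc\theta_0$) is correct and is a nice way to package the finite case-check. The null-homology step is also right. However, there is a genuine gap in the imaginary-part estimate, and it sits exactly where the paper spends most of its effort.

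You assert that ``Table 1 is arranged so that the ideal turning planes at the corners are consistent with the choices of $\vec n$.'' This is true for $a^n$ (both $\vec v_a$ and $\vec v_b$ are orthogonal to $(0,1,0)$, so the rotation axis at each corner is exactly the Table-1 normal), but it is false for the mixed relators. At the corner of $acb^{-1}c^{-1}$ between $a$ and $c$, the turning rotates $\vec v_b$ to $\vec v_c$, so its axis is proportional to $\vec v_b\times\vec v_c=(\cos^2\theta_0,-\sin^2\theta_0,\sin\theta_0\cos\theta_0)$; this makes an $O(1)$ angle (not $O(\delta/L)$) with $(0,1,0)$, and it makes an $O(1)$ angle with $(1,0,0)$ as well. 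So the Table-1 almost-normal vectors are \emph{not} aligned with the corner rotation axes, and the claim that carrying them around produces total holonomy $O(n\delta/L)$ does not follow from the third condition of Lemma~\ref{nullhomologous} alone. The paper's proof resolves this by picking \emph{new} corner-normal vectors, for example $(\frac{\sqrt5+1}{4},\frac{\sqrt5-1}{4},-\tfrac12)$, which is orthogonal to both $-\vec v_c$ and $\vec v_a$, and then \emph{proving} that each arc's holonomy carries the corner normal at one endpoint to the corner normal at the other: it expands the new normal in the basis $\{(0,1,0),(0,1,0)\times\vec v_a\}$, transports each basis vector using the constraints of Lemma~\ref{nullhomologous}, and reassembles. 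Only after adjacent arcs genuinely share an almost-normal at each corner can Lemma 4.8 of \cite{LM} be applied to control the full \emph{complex} length. Your proposal correctly identifies the angle check as a necessary ingredient, but the normal-vector compatibility is the second, independent ingredient, and in the paper it is the part requiring explicit vector algebra; omitting it leaves the imaginary-part bound unproved for all relators other than $a^n,\dots,f^n$.
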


\begin{proof}
We will only prove the lemma for $a^n$ and $acb^{-1}c^{-1}$, and the proof for the other words are exactly the same. These closed geodesics are clearly null-homologous in $M$, since all the geodesic arcs $a,b,c,d,e,f$ based at $p$ are null-homologous, by Lemma \ref{nullhomologous}.

{\bf 1) Proof for $a^n$:} By the construction of the geodesic arc $a$, the initial and terminal tangent vectors of $a$ at $p$ are $\frac{\delta}{L}$-close to $\vec{v}_a$ and $\vec{v}_b$  respectively, and the parallel transportation of $(0,1,0)$ along $a$ is $\frac{\delta}{L}$-close to $(0,1,0)$.

Then Lemma 4.8 of \cite{LM} and Remark \ref{log2} imply the desired estimation.

{\bf 2) Proof for $acb^{-1}c^{-1}$:} In this case, the almost normal vectors in Table 1 of adjacent oriented geodesic arcs, say $a$ and $c$, do not coincide. So we need to choose new almost normal vectors at the initial and terminal points of $a$, $c$, $b^{-1}$ and $c^{-1}$ respectively, such that adjacent geodesic arcs share the same almost normal vector at their intersection point.

Since the parallel transportation of $\vec{v}_a=(\sin{\theta_0},0,\cos{\theta_0})$ and $(0,1,0)$ to $p$ along $a$ are $\frac{\delta}{L}$-close to $\vec{v}_b=(\sin{\theta_0},0,-\cos{\theta_0})$ and $(0,1,0)$ respectively, the parallel transportation of $(\frac{\sqrt{5}+1}{4},\frac{\sqrt{5}-1}{4}, -\frac{1}{2})$ to $p$ along $a$ is $\frac{4\delta}{L}$-close to $(-\frac{\sqrt{5}+1}{4},\frac{\sqrt{5}-1}{4}, -\frac{1}{2})$. This estimation holds because $$(\frac{\sqrt{5}+1}{4},\frac{\sqrt{5}-1}{4}, -\frac{1}{2})=\sqrt{\frac{5+\sqrt{5}}{8}}(0,1,0)\times \vec{v}_a+\frac{\sqrt{5}-1}{4}(0,1,0)$$ and $$(-\frac{\sqrt{5}+1}{4},\frac{\sqrt{5}-1}{4}, -\frac{1}{2})=\sqrt{\frac{5+\sqrt{5}}{8}}(0,1,0)\times \vec{v}_b+\frac{\sqrt{5}-1}{4}(0,1,0).$$

Here $(\frac{\sqrt{5}+1}{4},\frac{\sqrt{5}-1}{4}, -\frac{1}{2})$ is orthogonal with both $-\vec{v}_c$ and $\vec{v}_a$, while $(-\frac{\sqrt{5}+1}{4},\frac{\sqrt{5}-1}{4}, -\frac{1}{2})$ is orthogonal with both $\vec{v}_b$ and $\vec{v}_c$.

By the same argument, applied to $c$, $b^{-1}$ and $c^{-1}$ respectively, we have the following estimations.
\begin{itemize}
\item The parallel transportation of $(-\frac{\sqrt{5}+1}{4},\frac{\sqrt{5}-1}{4}, -\frac{1}{2})$ to $p$ along $c$ is $\frac{4\delta}{L}$-close to\\ $(-\frac{\sqrt{5}+1}{4},\frac{\sqrt{5}-1}{4}, \frac{1}{2})$.
\item The parallel transportation of $(-\frac{\sqrt{5}+1}{4},\frac{\sqrt{5}-1}{4}, \frac{1}{2})$ to $p$ along $b^{-1}$ is $\frac{4\delta}{L}$-close to \\ $(\frac{\sqrt{5}+1}{4},\frac{\sqrt{5}-1}{4}, \frac{1}{2})$.
\item The parallel transportation of $(\frac{\sqrt{5}+1}{4},\frac{\sqrt{5}-1}{4}, \frac{1}{2})$ to $p$ along $c^{-1}$ is $\frac{4\delta}{L}$-close to \\ $(\frac{\sqrt{5}+1}{4},\frac{\sqrt{5}-1}{4}, -\frac{1}{2})$.
\end{itemize}

Now for each geodesic arc of the piecewise geodesic path $acb^{-1}c^{-1}$, it is equipped with new almost normal vectors at the initial and terminal points, such that adjacent oriented arcs share the same almost normal vector at their intersection point. So we can apply Lemma 4.8 of \cite{LM} and Remark \ref{log2} again to get the desired estimation.
\end{proof}

\begin{rem}
For any two adjacent oriented geodesic arcs in $acb^{-1}c^{-1}$, the tangent vector of the second geodesic arc at its initial point is $\frac{2\delta}{L}$-close with the $\pi-2\theta_0$ clockwise rotation (about the common almost normal vector in Lemma \ref{Repsilon}) of the tangent vector of the first geodesic arc at its terminal point. This property guarantees that the corresponding four-cornered annulus in $X'$ can be immersed into $M$ with an almost totally geodesic subsurface as its image, such that one boundary component of the annulus is mapped to the piecewise geodesic  closed path $acb^{-1}c^{-1}$, and the other boundary component is mapped to the corresponding closed geodesic in $M$.

The readers can imagine the case that the picture appears exactly on a hyperbolic surface. It those four "turns" are not all left turn (or all right turn), the four-cornered annulus can not be immersed into the hyperbolic surface, since the piecewise geodesic closed path and the closed geodesic may intersect with each other.
\end{rem}

Now let $\delta=\frac{\epsilon}{400}$ and $L=\cosh^{-1}{(\frac{\cosh{R}+\cos^2{\theta_0}}{\sin^2{\theta_0}})}=R+2\log{(\csc{\theta_0})}+O(e^{-R})$ as our input data. Here $\epsilon>0$ is so small and $R>0$ is so large, such that Corollary \ref{bound}, Theorem \ref{quasi1} and Theorem \ref{technical} hold for $(R,\epsilon)$. Moreover, we also require that Lemma \ref{nullhomologous} and Remark \ref{log2} hold for $(L,\delta)$. Then Lemma \ref{Repsilon} implies that all those twelve closed geodesics in $M$ corresponding with the twelve relators in the group presentation \eqref{1} lie in ${\bold \Gamma}_{4R,\frac{\epsilon}{4R}}$.

Now we are ready to construct an immersed $\pi_1$-injective $2$-complex $j:Z\looparrowright M$, under the hint of the handle structure of $N$. Here we divide the construction into a few steps.

{\bf Step I.} Recall that, by the end of Section \ref{handle}, we have constructed a $2$-complex $X'$ in $M_0$, which consists of one $0$-cell, six $1$-cells, twelve circles, six four-cornered annuli and six one-cornered annuli. $X'$ lifts to a $2$-complex $Y'$ in $N$, here $Y'$ a $d$-sheet cover of $X'$ and is a deformation retract of $N^{(1)}$. Moreover, $Y'\cap \partial N^{(1)}$ is exactly the union of $k$ disjoint circles on $\partial N^{(1)}$ where the $2$-handles of $N$ are attached.

We can construct an immersion $j':X'\looparrowright M$ for any closed hyperbolic $3$-manifold $M$ by the following way.

Let the $0$-cell of $X'$ be mapped to the point $p \in M$ which we have already chosen. Let the six oriented $1$-cells of $X'$ (correspond with the six generators $a,b,c,d,e,f$ in the group presentation \eqref{1}) be mapped to the six corresponding oriented geodesic arcs in $M$ based at $p$, by applying Lemma \ref{nullhomologous} to the data in Table 1. Let the twelve circles in $X'$ be mapped to twelve closed geodesics in $M$, which are homotopic to the corresponding concatenation of geodesic arcs. Six of them correspond with the first six relators in the group presentation \eqref{1}, and the remaining six of them correspond with a quarter of the remaining six relators. For example, one of them is mapped to the closed geodesic in $M$ which is homotopic to the closed geodesic arc $e$, instead of $e^4$. Lemma \ref{Repsilon} implies that the first six closed geodesics lie in ${\bold \Gamma}_{4R,\frac{\epsilon}{4R}}$, and the remaining six lie in ${\bold \Gamma}_{R,\frac{\epsilon}{R}}$.

For the four-cornered annuli (one-cornered annuli) in $X'$, we may subdivide them in the following way. First add an arc from each corner point to its opposite circle, to divide the annuli into four (one) $4$-gons. Then add a diagonal to each $4$-gon, which divide the $4$-gon to two triangles.

Then the arcs from the corner points to the opposite circle are mapped to the geodesic arcs in the right homotopic class with initial point $p$ and perpendicular with the corresponding closed geodesic in $M$. For the remaining part of the cornered annuli, $j'$ maps arcs to geodesic arcs, and map triangles to totally geodesic triangles.

{\bf Step II.} $j':X'\looparrowright M$ induces an immersion $j'': Y'\looparrowright M$, by take the composition of $j'$ and the finite sheet cover from $Y'$ to $X'$. $Y'$ consists of $d$ $0$-cells, $6d$ $1$-cells, $k$ circles and $k$ cornered annuli. Moreover, Theorem \ref{HLMW} and Lemma \ref{Repsilon} imply that all the circles in $Y'$ are mapped to closed geodesics in ${\bold \Gamma}_{4R,\frac{\epsilon}{4R}}$, ${\bold \Gamma}_{2R,\frac{\epsilon}{2R}}$ or ${\bold \Gamma}_{R,\frac{\epsilon}{R}}$. We denote these circles in $Y'$ by $C_1,\cdots,C_k$ and their images under $j''$ by $\gamma_1,\cdots,\gamma_{k}$. For each $C_i$, we give it an arbitrary orientation, and it induces an orientation on $\gamma_i$. Lemma \ref{Repsilon} also implies that $\gamma_i$ is null-homologous in $M$ for $i=1,\cdots,k$.

{\bf Step III.} For each circle $C_i\subset Y'$ which lies in a cornered annulus, in the subdivision of $Y'$ induced by the subdivision of $X'$, there are a few (one, two or four) arcs from the corner points to $C_i$. By the construction of $j':X'\looparrowright M$, $j''$ maps these arcs to geodesic arcs perpendicular with $\gamma_i$, and we choose one such geodesic arc for each $\gamma_i$, and denote it by $\alpha_i$. We denote the intersection between $j''(\alpha_i)$ and $\gamma_i$ by $p_i$, and denote the unit tangent vector of $j''(\alpha_i)$ at $p_i$ by $\vec{u}_i$ (pointing to $p_i$). Let $\vec{v}_i$ be the unit normal vector of $\gamma_i$ which is the $1+\pi i$ shearing of $\vec{u}_i$ along $\gamma_i$.

Now we apply Corollary \ref{bound} to $L_i=\gamma_i \sqcup \gamma_i$, the union of two copies of $\gamma_i$, then $L_i\in \mathbb{Z}{\bold \Gamma}_{r_iR,\frac{\epsilon}{r_iR}}$ for $r_i\in\{1,2,4\}$. Here $C_i\subset Y'$ is one boundary component of an $r_i$-cornered annulus. Since $\sigma(\gamma_i)\in \mathbb{Z}_2$, we have $\sigma(L_i)=\bar{0}\in \mathbb{Z}_2$. So Corollary \ref{bound} implies that $L_i$ bounds an immersed oriented $(r_i R,\frac{\epsilon}{r_i R})$-almost totally geodesic subsurface $f_i: S_i\looparrowright M$, such that the two pair of pants containing the two boundary components of $S_i$ both have a foot  $\frac{\epsilon}{(r_i R)^2}$-close to $\vec{v}_i$. Since we can suppose $S_i$ does not have closed surfaces as it connected components, there are two possibilities for $S_i$. $S_i$ is either a connected surface with two boundary components, or the union of two connected surfaces, and each of them has one boundary component. Actually, the second possibility does not always happen. For example, for the closed geodesic $\gamma_j$ corresponding with $e^4$ (which exists by Theorem 1.1 of \cite{HLMW}), its canonical lifting goes around some closed curve in $SO(M)$ for four times, then do a $2\pi$-counterclockwise rotation about some vector. So $\sigma(\gamma_j)=\bar{1}$, and Corollary \ref{bound} implies that $\gamma_j$ itself does not bound an immersed $(4R,\frac{\epsilon}{4R})$-almost totally geodesic subsurface in $M$. So $S_j$ is a connected surface in this case.

{\bf Step IV.} By applying the surgery argument as in Section 2.1 of \cite{Su}, we can suppose each $S_i$ satisfies the following property. Endow $S_i$ with the hyperbolic metric such that each curve in the pants decomposition of $S_i$ has length exactly $r_iR$, and the shearing parameter is exactly $1$, then any essential arc $(I,\partial I)\rightarrow (S_i,\partial S_i)$ has length greater than $\frac{R}{2}$.

Now we can define our desired $2$-complex $Z$, which is the quotient space of the union of two copies of $Y'$ (denoted by $Y'_1$ and $Y'_2$) and $\{S_i\}_{i=1}^{k}$. For each $S_i$, the two boundary components of $S_i$ are pasted to the two copies of $C_i$ in $Y'_1$ and $Y'_2$ respectively, by orientation preserving homeomorphism. Then $j'':Y'\looparrowright M$ and $f_i: S_i\looparrowright M$ give the desired immersion $j:Z\looparrowright M$, with induced map $j_*:\pi_1(Z)\rightarrow \pi_1(M)$. In the construction of the desired degree-$2$ map, the union of $Y_1'$ and $Y_2'$ will correspond with $N^{(1)}$, and the $S_i$'s will correspond with the $2$-handles of $N$.

Let $\rho_0:\pi_1(Z)\rightarrow PSL_2(\mathbb{C})$ be the representation such that all the $\frac{\epsilon}{R}$- and $\frac{\epsilon}{R^2}$-closeness statements appeared in $j_*:\pi_1(Z)\rightarrow \pi_1(M)\subset PSL_2(\mathbb{C})$ are replaced by exact ($0$-closeness) statements. As an algebraic object, $\rho_0: \pi_1(Z)\rightarrow PSL_2(\mathbb{C})$ is accompanied with a map $i_0:\widetilde{Z}\rightarrow \mathbb{H}^3$ from the universal cover $p:\widetilde{Z}\rightarrow Z$ of $Z$ to $\mathbb{H}^3$. The map $i_0:\widetilde{Z}\rightarrow \mathbb{H}^3$ is a geometric object which carries the algebraic information of $\rho_0: \pi_1(Z)\rightarrow PSL_2(\mathbb{C})$. The image $i_0(\widetilde{Z})$ is a union of totally geodesic subsurfaces in $\mathbb{H}^3$, pasted along a union of geodesic arcs. More precisely, $i_0:\widetilde{Z}\rightarrow \mathbb{H}^3$ is defined by the following conditions, which also give the precise definition of $\rho_0:\pi_1(Z)\rightarrow PSL_2(\mathbb{C})$.

\begin{construction}\label{standard}
\begin{itemize}
\item For each $0$-cell $x$ in $\widetilde{Z}$, it is associated with an orthonormal frame $\{\vec{e}_1,\vec{e}_2,\vec{e}_3\}$ at $i_0(x)$ with respect to the orientation of $\mathbb{H}^3$.
\item For each $1$-cell $t$ in $\widetilde{Z}$, it is endowed with an orientation such that the oriented $1$-cell $p(t)$ in $Z$ corresponds with one of the oriented $1$-cell $a,b,c,d,e,f$ in $X'$. Suppose $t$ travels from one $0$-cell $x$ to another $0$-cell $y$ in $\widetilde{Z}$, let $\{\vec{e}_1,\vec{e}_2,\vec{e}_3\}$ and $\{\vec{e}_1',\vec{e}_2',\vec{e}_3'\}$ be the two orthonormal frames at $i_0(x)$ and $i_0(y)$ respectively, then the following conditions hold.
\begin{itemize}
\item $i_0(t)$ is a geodesic arc from $i_0(x)$ to $i_0(y)$ with length equal to $L$.
\item The tangent vectors of $i_0(t)$ at $i_0(x)$ and $i_0(y)$ are exactly the corresponding vectors $\vec{v}_1$ and $\vec{v}_2$ in Table 1 respectively (under the coordinate given by orthonormal frames $\{\vec{e}_1,\vec{e}_2,\vec{e}_3\}$ and $\{\vec{e}_1',\vec{e}_2',\vec{e}_3'\}$).
\item The parallel transportation of $\{\vec{e}_1,\vec{e}_2,\vec{e}_3\}$ to $i_0(y)$ along $i_0(t)$ is equal to the counterclockwise $\pi-2\theta_0$ rotation of $\{\vec{e}_1',\vec{e}_2',\vec{e}_3'\}$ about a vector $\vec{n} \in T_{i_0(q)}^1(M)$. Here
\begin{equation}\label{2}
\vec{n}=\left\{ \begin{array}{ll}
        \vec{e}_2' & \text{if}\  p(t)\ \text{corresponds with}\ a\ \text{in}\ X', \\
        -\vec{e}_2' & \text{if}\  p(t)\ \text{corresponds with}\ b\ \text{in}\ X', \\
        \vec{e}_1' & \text{if}\  p(t)\ \text{corresponds with}\ c\ \text{in}\ X', \\
        -\vec{e}_1' & \text{if}\  p(t)\ \text{corresponds with}\ d\ \text{in}\ X', \\
        \vec{e}_3' & \text{if}\  p(t)\ \text{corresponds with}\ e\ \text{in}\ X', \\
        -\vec{e}_3' & \text{if}\  p(t)\ \text{corresponds with}\ f\ \text{in}\ X'. \\
        \end{array} \right.
\end{equation}
\end{itemize}
\item For any component of the preimage of a circle in the pants decomposition of some $S_i$, it is mapped to a bi-infinite geodesic in $\mathbb{H}^3$.
\item For any component of the preimage of a four-cornered annulus in $Z$, it is mapped to a totally geodesic subsurface in $\mathbb{H}^3$ with two boundary components. One of its boundary component is a concatenation of geodesic arcs of length $L$, and the other boundary component is a bi-infinite geodesic. Moreover, these two boundary components share the same limit points on $\partial \mathbb{H}^3=S^2_{\infty}$.
\item For any arc in $\widetilde{Z}$ which is a component of the preimage of either a seam in a pair of pants, or an arc in a cornered annulus from a corner point to the opposite circle, its image under $i_0$ is a geodesic arc perpendicular with the corresponding bi-infinite geodesics.
\item For any circle $C$ in the oriented surface $S_i$ which is the boundary of a pair of pants $\Pi$, we have ${\bf hl}_{\Pi}(C)=\frac{r_iR}{2}$. Here ${\bf hl}_{\Pi}(C)=\frac{r_iR}{2}$ means the following statement holds. Take any bi-infinite line $\beta\subset \widetilde{Z}$ which is a component of $p^{-1}(C)$, and any two arcs $\alpha$ and $\alpha'$ in $\widetilde{Z}$ that intersect with $\beta$ and project to two seams of $\Pi\subset S_i$, such that there is no other such arc between $\alpha$ and $\alpha'$ along $\beta$. Then the tangent vector of $i_0(\alpha)$ is the $\frac{r_iR}{2}$ translation of the tangent vector of $i_0(\alpha')$ along $i_0(\beta)$ (under proper orientation).
\item For any oriented circle $C$ shared by two pants $\Pi$ and $\Pi'$ in $S_i$, such that $\Pi$ lies to the left of $C$, we have $s(C)=1$. Here $s(C)=1$ means the following statement holds. Take any bi-infinite line $\beta\subset \widetilde{Z}$ which is a component of $p^{-1}(C)$, and any arc $\alpha$ in $\widetilde{Z}$ that intersects with $\beta$ and projects to a seam of $\Pi$. Let $\alpha'$ be the arc that intersects with $\beta$ and projects to a seam of $\Pi'$, such that $i_0(\alpha')$ is the nearest such arc from $i_0(\alpha)$. Then the tangent vector of $i_0(\alpha')$ is the $1+\pi i$ translation of the tangent vector of $i_0(\alpha)$ along $i_0(\beta)$ .
\item For any oriented boundary component $C_i$ of $S_i$ which is the cuff of a pair of pants $\Pi\subset S_i$, in step III of the construction of $j:Z\rightarrow M$, we have chosen an arc $\alpha_i$ which travels from some $0$-cell of $Z$ to $C_i$. Take any bi-infinite line $\beta\subset \widetilde{Z}$ which is a component of $p^{-1}(C_i)$, and any arc $\alpha$ in $\widetilde{Z}$ that intersects with $\beta$ and projects to $\alpha_i$. Let $\alpha'$ be the arc that intersects with $\beta$ and projects to a seam of $\Pi$, such that $i_0(\alpha')$ is the nearest such arc from $i_0(\alpha)$. Then the tangent vector of $i_0(\alpha')$ is the $1+\pi i$ translation of the tangent vector of $i_0(\alpha)$ along $i_0(\beta)$.
\item For any component of the preimage of a pair of pants in $S_i$, it is mapped to a totally geodesic subsurface in $\mathbb{H}^3$.
\end{itemize}
\end{construction}

Then $\rho_0:\pi_1(Z)\rightarrow PSL_2(\mathbb{C})$ can be defined by: for any $z\in \widetilde{Z}$, and any $g\in \pi_1(Z)$, $\rho_0(g)(i_0(z))=i_0(g(z))$ holds. In this case, the frames at the images of $0$-cells are also $\pi_1(Z)$-equivariant.

Although this construction is very long, it is simply describing a $2$-complex $\widetilde{Z}$ in $\mathbb{H}^3$ which is the union of a few totally geodesic subsurfaces, along an $1$-dimensional complex consisting of long geodesic arcs. The map $i_0:\widetilde{Z}\rightarrow \mathbb{H}^3$ will serve as our standard model of its algebraic counterpart $\rho_0:\pi_1(Z)\rightarrow PSL_2(\mathbb{C})$. We will study geometric properties of the deformations of $i_0$, which gives us algebraic properties of small deformations of $\rho_0$, in particular $j_*:\pi_1(Z)\rightarrow \pi_1(M)\subset PSL_2(\mathbb{C})$.

The map $i_0:\widetilde{Z}\rightarrow \mathbb{H}^3$ induces a path metric on $\widetilde{Z}$, which is denoted by $d_0$. Then an argument in hyperbolic geometry gives the following result, which will be shown in Section \ref{injectivity}.

\begin{prop}\label{quasi}
When $R$ is large enough, $i_0:(\widetilde{Z},d_0) \rightarrow (\mathbb{H}^3, d_{\mathbb{H}^3})$ is a embedding and also a quasi-isometric embedding. In particular, $\rho_0:\pi_1(Z)\rightarrow PSL_2(\mathbb{C})$ is an injective map.
\end{prop}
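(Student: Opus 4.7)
The plan is to prove that $i_0$ is a local quasi-isometric embedding with uniform constants, then invoke the standard local-to-global principle in hyperbolic geometry (any broken geodesic whose segments are sufficiently long and whose exterior angles are bounded away from $\pi$ is a global quasi-geodesic in $\mathbb{H}^3$). The combination of large $L\asymp R$ and bounded-away-from-$\pi$ turn angles at each junction type, made possible by the dodecahedral vertex geometry and the exact shearing conditions in Construction \ref{standard}, is what will make this work for $R$ large.

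First, I would decompose $\widetilde{Z}$ into its totally geodesic ``chambers'': lifts of $0$-cells, lifts of $1$-cells (geodesic arcs of length $L$), lifts of (four- or one-) cornered annuli (each a totally geodesic strip with one piecewise-geodesic bi-infinite boundary and one bi-infinite geodesic boundary), and lifts of the pants of the $S_i$ (each a totally geodesic half-plane), glued along $0$-cells, $1$-cells, and bi-infinite geodesics. Next, given any rectifiable path in $(\widetilde{Z},d_0)$, I would replace it, without increasing its length, by a \emph{straight} path that within each chamber it visits is an $\mathbb{H}^3$-geodesic segment and only breaks at the gluing loci. Its image under $i_0$ is then a broken geodesic in $\mathbb{H}^3$.

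The heart of the argument is to bound the geometry of this broken geodesic uniformly. For segment lengths: each subsegment along a $1$-cell has length $L$; by the surgery step in the construction of $S_i$, every essential arc across a pant has length $>R/2$; the analogous bound for arcs across a cornered-annulus strip comes from the fact that the bi-infinite geodesic boundary (a lift of $\gamma_i$) is separated from the piecewise-geodesic boundary by a distance comparable to $\log(1/\sin\theta_0)$ plus a term growing with $R$. For turn angles, one checks case by case that each junction yields an exterior angle bounded away from $\pi$: at a $0$-cell, consecutive $1$-cells meet along the dodecahedral directions $\vec v_a,\ldots,\pm\vec v_f$, whose pairwise angles are bounded below by $2\theta_0>0$; where a cornered annulus attaches to a $1$-cell, the rotational data in \eqref{2} forces a dihedral angle pinned to the dodecahedral geometry; along the lift of a cuff $C_i$ of $S_i$, where a cornered annulus meets a pant, the annulus approaches $\tilde\gamma_i$ perpendicularly while the seam of the pant is also perpendicular to $\tilde\gamma_i$, and the $1+\pi i$-shearing relation between $\vec u_i$ and $\vec v_i$ forces the two perpendicular frames to differ by a uniform rotation; along an internal cuff shared by two pants of a single $S_i$, the exact $s(C)=1$ condition reproduces the Kahn--Markovic configuration treated in Lemma~3.8 of \cite{LM}.

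Once these uniform local bounds are in place, the local-to-global lemma upgrades the broken $\mathbb{H}^3$-geodesic $i_0(\sigma')$ to a $(\lambda,c)$-quasi-geodesic with constants independent of $\sigma$, giving the desired quasi-isometric embedding. Embeddedness follows because each chamber embeds totally geodesically and the quasi-isometric estimate prevents any two distinct chambers from collapsing onto a common image. Injectivity of $\rho_0$ is immediate: $\pi_1(Z)$ acts freely on the simply connected $\widetilde{Z}$ and $i_0$ is $\pi_1(Z)$-equivariant, so $\rho_0(g)=\mathrm{id}$ forces $g\cdot z=z$ for all $z\in\widetilde{Z}$, whence $g=e$. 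I expect the main obstacle to be the junction analysis along the lift of a cuff $C_i$, where four distinct geometric objects (two pants of $S_i$, a cornered annulus strip, and the perpendicular seam arc $\alpha_i$ to the $1$-skeleton) must simultaneously respect the dodecahedral framing at the far end of $\alpha_i$ and the shearing conditions inside $S_i$; turning the auxiliary vector $\vec v_i$ built from $\vec u_i$ into a sharp angle bound is the key computation that forces the specific choices of $L$, $\theta_0$, and $s(C)=1$ used in Construction \ref{standard}.
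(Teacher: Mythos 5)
Your overall framework — decompose $\widetilde{Z}$, straighten any path to a broken $\mathbb{H}^3$-geodesic, bound segment lengths and turn angles, then invoke a local-to-global quasi-geodesic lemma — is indeed the skeleton of the paper's proof. The paper also ends up with segments of length $\geq R/128$, turn angles $\geq \pi/18$, and then applies Lemma 4.8(1) of \cite{LM} (the concrete incarnation of your ``standard local-to-global principle''). Your injectivity argument for $\rho_0$, via equivariance and injectivity of $i_0$, matches the paper's as well.

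However, there is a genuine gap in the segment-length analysis, and it is precisely the step the paper spends the most effort on. You claim uniform lower bounds on all broken-geodesic segments: length $L$ along $1$-cells, length $>R/2$ across a pant (via the surgery in Step IV), and length ``comparable to $\log(1/\sin\theta_0)$ plus a growing term'' across a cornered-annulus strip. The first and third claims do not cover the dangerous case. The straightened shortest path need not cross a cornered annulus ``from the piecewise-geodesic side to the bi-infinite-geodesic side''; it can enter and exit through two adjacent $1$-cells near a corner, and a corner of a cornered annulus sits exactly at a $0$-cell of $\widetilde{Z}$. A shortest path passing at $d_0$-distance $\eta$ from a $0$-cell crosses up to three consecutive $1$-cells with gaps of order $\eta\tan(2\theta_0)$, so the broken geodesic acquires segments of length $O(\eta)$, arbitrarily short, with turn angles bounded away from both $0$ and $\pi$. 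A local-to-global lemma cannot absorb that: short segments with non-degenerate exterior angles can make a broken geodesic wander far from being a quasi-geodesic, so your claimed uniform constants do not exist for the raw straightened path. The paper resolves this by introducing the \emph{modified path}: whenever consecutive intersection points with $\widetilde{Z}^{(1)}$ are within $R/64$ of one another, it collapses the whole maximal cluster (of length at most $3$, since $3\cdot 2\theta_0>\pi$) to the single nearby $0$-cell, reroutes the path through that $0$-cell, and then verifies both that the resulting segments have length $\geq R/128$ and that the new turn angle at the $0$-cell is still $\geq 3\cdot 2\theta_0-\pi\approx 0.0572\pi$. Without some version of this merging step your proof does not close. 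By contrast, the junction-at-a-cuff issue you flag as the ``key computation'' is actually a non-issue in the standard model: the exact $s(C)=1+\pi i$ conditions in Construction \ref{standard} make each piece (an $S_i$ with its two cornered annuli) lie in a single totally geodesic plane, so there is no angle to control along a cuff at all; the only breaking loci for $i_0$ are the $1$-cells and $0$-cells, which is exactly why the $0$-cell short-segment problem is the one that needs a new idea.

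Two smaller inaccuracies worth noting: the Step IV surgery bound is an $>R/2$ lower bound on essential arcs $(I,\partial I)\to(S_i,\partial S_i)$, not on arcs across a single pant, and the paper uses it only for the portion of a segment that actually enters $\widetilde{S}_i$ from the annulus side; and your chamber decomposition is finer than the paper's ``pieces'' (closures of components of $\widetilde{Z}\setminus\widetilde{Z}^{(1)}$), but because of the coplanarity just mentioned the two decompositions yield the same broken geodesics, so this is not where the discrepancy lies.
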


The following result claims that a small deformation of $\rho_0$ is still $\pi_1$-injective. It is shown by geometric estimation on the associated map $i:\widetilde{Z}\rightarrow \mathbb{H}^3$ of $j_*:\pi_1(Z)\rightarrow PSL_2(\mathbb{C})$.

\begin{thm}\label{claim}
When $\epsilon>0$ is small enough and $R>0$ is large enough, the map $j:Z\looparrowright M$ induces an injective map $j_*:\pi_1(Z)\hookrightarrow \pi_1(M)$, and $j_*(\pi_1(Z))\subset \pi_1(M)$ is a geometric finite subgroup. Moreover, $\mathbb{H}^3/j_*(\pi_1(Z))$ is homeomorphic to $\mathbb{H}^3/\rho_0(\pi_1(Z))$ with respect to their orientations.
\end{thm}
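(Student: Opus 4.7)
The plan is to construct the holonomy-geometric analogue $i:\widetilde{Z}\to\mathbb{H}^3$ of $j_*$ parallel to Construction \ref{standard}, and then to deduce Theorem \ref{claim} from a perturbative version of Proposition \ref{quasi}. Concretely, lifting $j$ to the universal cover gives a $\pi_1(Z)$-equivariant piecewise-smooth map $i:\widetilde{Z}\to \mathbb{H}^3$ which differs from $i_0$ only by the $\delta/L$- and $\epsilon/R$-sized deviations introduced by Lemma \ref{nullhomologous} in the construction of the geodesic arcs $a,\dots,f$, and by the $(r_iR,\epsilon/(r_iR))$-almost totally geodesic nature of the pants subsurfaces $S_i$ furnished by Corollary \ref{bound}. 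Once $i$ is shown to be a quasi-isometric embedding, all three conclusions of the theorem will drop out from standard convex cocompact Kleinian group theory.

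The heart of the proof is therefore a quantitative stability statement: if Proposition \ref{quasi} holds for the exact model $i_0$ with quasi-isometry constants depending only on the dodecahedral combinatorics and on $R$, then any sufficiently small perturbation of $i_0$ on each combinatorial piece of $\widetilde{Z}$ (geodesic arc, lifted seam, lifted pair of pants, lifted cornered-annulus component) remains a quasi-isometric embedding with slightly worse constants. On the geodesic arcs the deviation is at most $\delta/L=\epsilon/(400L)$ in position and in the parallel-transported frame (Lemma \ref{connection}); on pants it is at most $\epsilon/(r_iR)^2$ in shearing and $\epsilon/(r_iR)$ in half-length; on seams and corner-arcs the perpendicularity deviates by the same order. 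Because each piece has diameter comparable to $R$, summing these errors along a piecewise-geodesic path in $\widetilde{Z}$ yields a cumulative deviation of slope $O(\epsilon/R^2)$ per unit hyperbolic length, which is negligible compared to the Morse-lemma tolerance in $\mathbb{H}^3$. The required local-to-global principle is exactly the one used in \cite[Lemma 3.8]{LM} to show $\pi_1$-injectivity of almost totally geodesic subsurfaces, extended across the bent $1$-complex $Y_1^{(1)}\cup Y_2^{(1)}\subset Z$ thanks to the fact that the bending angle $\pi-2\theta_0$ at every concatenation is uniformly bounded away from $\pi$.

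I expect the main technical obstacle to be the careful book-keeping of the angular defects at the junctions between the almost totally geodesic pants $S_i$ and the long geodesic arcs of $Y_j^{(1)}$, in particular verifying that the $1+\pi i$-shearing condition imposed in Step III correctly controls the twist at the circles $C_i$ so that the local pieces align into a genuine quasi-geodesic in the universal cover. Once that stability is in hand, injectivity of $j_*$ and the convex cocompactness (and hence geometric finiteness) of $j_*(\pi_1(Z))$ follow because $\pi_1(Z)$ acts cocompactly on $\widetilde{Z}$ and $i$ is a $\pi_1(Z)$-equivariant quasi-isometric embedding. For the final clause, both $\mathbb{H}^3/\rho_0(\pi_1(Z))$ and $\mathbb{H}^3/j_*(\pi_1(Z))$ are convex cocompact orientable hyperbolic $3$-manifolds whose convex cores are regular neighborhoods of embedded isomorphic copies of $Z$; the resulting compact Haken $3$-manifolds with boundary are homotopy equivalent via the identity on $Z$, and Waldhausen's theorem upgrades this homotopy equivalence to an orientation-preserving homeomorphism between the two quotients.
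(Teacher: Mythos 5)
Your broad strategy matches the paper's: perturb the standard model $i_0$ to get a $\pi_1(Z)$-equivariant map $i:\widetilde{Z}\to\mathbb{H}^3$ realizing $j_*$, and show $i$ remains a quasi-isometric embedding, from which injectivity and geometric finiteness of $j_*(\pi_1(Z))$ follow as you say. The paper carries this out by first restricting to a $1$-subcomplex $\widetilde{W}\subset\widetilde{Z}$ on which the combinatorics are cleaner (Construction \ref{defor}), then proving a piece-by-piece angle-and-length stability statement (Proposition \ref{estimation}) that feeds into a modified-path argument parallel to Proposition \ref{quasi} (Theorem \ref{technical}). Your summary of the error budget (errors of size $O(\epsilon/R)$ or $O(\epsilon/R^2)$ per junction, with bending bounded away from $\pi$) is the right heuristic, though the paper has to work to rule out compounding of angular errors at corner points, which is where the modified-path bookkeeping and the explicit angle lower bound $\pi/18$ come in.

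The last step is where there is a real gap. You assert that the two convex cores are homotopy equivalent compact Haken manifolds and that Waldhausen's theorem then gives a homeomorphism. But Waldhausen's theorem requires a homotopy equivalence \emph{of pairs} (respecting boundary components, ideally restricting to a homeomorphism of boundaries); a mere homotopy equivalence of compact Haken $3$-manifolds with nonempty incompressible boundary does not imply homeomorphism. This is exactly the phenomenon exploited by Anderson--Canary--McCullough to produce non-trivial components of $int(AH(G))$: there exist compact hyperbolizable $3$-manifolds with the same fundamental group (hence homotopy equivalent) carrying non-homeomorphic convex cocompact structures. Your attempt to close the gap --- that both convex cores are ``regular neighborhoods of embedded isomorphic copies of $Z$'' --- is precisely what needs proof; to know that the embedding $i(\widetilde{Z})/j_*(\pi_1(Z))\hookrightarrow\mathbb{H}^3/j_*(\pi_1(Z))$ has a regular neighborhood combinatorially isomorphic to the one for $\rho_0$ is essentially equivalent to the conclusion you are trying to deduce, and quasi-isometric embedding alone does not give it. The paper avoids this entirely: Theorem \ref{technical} is proved uniformly over the whole parameter family of Parameter \ref{parameter}, so that $\rho_0$ and $j_*$ are joined by a continuous path inside $int(AH(\pi_1(Z)))$, and the homeomorphism then follows from the fact that path components of $int(AH(G))$ have a single marked homeomorphism type (Marden's stability / parameterization of geometrically finite structures). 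If you want to keep the Waldhausen route, you would need to additionally control the peripheral structure --- e.g.\ show that the boundary components of the two convex cores correspond under the isomorphism of fundamental groups --- and that control does not come for free from the quasi-isometric embedding.
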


The intuitive reason of Theorem \ref{claim} is quite simple. Since $j_*$ is a small deformation of $\rho_0$, $j_*(\pi_1(Z))$ should share geometrical and topological properties with $\rho_0(\pi_1(Z))$ (as in \cite{KM1} and \cite{Su}). We will give more precise description of the deformations of $\rho_0:\pi_1(Z)\rightarrow PSL_2(\mathbb{C})$ in Section \ref{injectivity}, and the proof of Theorem \ref{claim} will also be delayed to Section \ref{injectivity}. The main reason is, although this result is very intuitive, the geometric estimation is technical. We do not want these technical stuff to distract the readers from the main idea of the proof of Theorem \ref{main}.

\subsection{Construction of Domination}\label{pinching}

In this section, we will prove Theorem \ref{main}, by assuming Proposition \ref{quasi} and Theorem \ref{claim}. Theorem \ref{main} claims that, for any closed oriented hyperbolic $3$-manifold $M$, and any closed oriented $3$-manifold $N$, $M$ admits a finite cover $M'$, such that there exists a degree-$2$ map $f:M'\rightarrow N$.

To prove Theorem \ref{main}, we first need to figure out the topology of $\mathbb{H}^3/\rho_0(\pi_1(Z))$.

Let $H$ be the oriented handlebody which is homeomorphic to $N^{(1)}$ (the union of $0$- and $1$-handles of $N$), and we fix such an orientation-preserving identification. There are $k$ disjoint annuli $A_1,\cdots,A_{k}$ in $\partial N^{(1)}$, where the $k$ $2$-handles of $N$ are attached. We abuse notation and still use $A_1,\cdots,A_{k}$ to denote the corresponding $k$ disjoint annuli on $\partial H$. Now we take two copies of $H$ and denote them by $H_1$ and $H_2$. We also denote the corresponding annuli of $A_i$ on $\partial H_1$ and $\partial H_2$ by $A_{i,1}$ and $A_{i,2}$ respectively. Here $H_j$ and $A_{i,j}$ are all endowed with the identical orientations as $H$ and $A_i$.

Now we take $k$ $I$-bundles over surfaces: $S_1\times I,\cdots,S_{k}\times I$, these $S_i$'s are the oriented surfaces constructed in step III of the construction of $Z$. Since the boundary of $S_i$ has two oriented components $C_{i,1}$ and $C_{i,2}$, the boundary of $S_i\times I$ contains two disjoint annuli $C_{i,1}\times I$ and $C_{i,2}\times I$, such that the product orientation on $C_{i,1}\times I$ and $C_{i,2}\times I$ coincide with their induced orientation from $S_i\times I$.

Let $j_i:A_{i,1}\rightarrow A_{i,2}$ be the orientation preserving homeomorphism of the two annuli give by the identification between $H_1$ and $H_2$. Let $k_i:C_{i,1}\times I \rightarrow C_{i,2}\times I$ be an orientation preserving homeomorphism which also preserves the orientation of the $I$-factor. For each $i\in \{1,\cdots,k\}$, we take an orientation reversing homeomorphism $\phi_i:C_{i,1}\times I \cup C_{i,2}\times I\rightarrow A_{i,1} \cup A_{i,2}$ which maps $C_{i,1}\times I$ to $A_{i,1}$ and maps $C_{i,2}\times I$ to $A_{i,2}$, such that $$j_i\circ\phi_i|_{C_{i,1}\times I}=\phi_i|_{C_{i,2}\times I}\circ k_i.$$

Now let $$K=(H_1\cup H_2)\bigcup_{\{\phi_i\}_{i=1}^{k}}(\bigcup_{i=1}^{k} S_i\times I).$$ Then by the construction of $i_0:\widetilde{Z}\rightarrow \mathbb{H}^3$ in Construction \ref{standard}, it is easy to see that $\mathbb{H}^3/\rho_0(\pi_1(Z))$ is homeomorphic with $int(L)$, with respect to their orientations. Actually, each boundary component of $K$ is incompressible.

Let $Z^{(1)}$ be the union of $0$-cells and $1$-cells in $Z$, $\widetilde{Z}^{(1)}$ be the preimage of $Z^{(1)}$ in $\widetilde{Z}$, and $N(\widetilde{Z}^{(1)})$ be a small neighborhood of $i_0(\widetilde{Z}^{(1)})$ in $\mathbb{H}^3$. Then an important point is, $N(\widetilde{Z}^{(1)})/\rho_0(\pi_1(Z))$ is homeomorphic to two copies of $N^{(1)}$, and $(\partial N(\widetilde{Z}^{(1)}))\cap (i_0(\widetilde{Z}))/\rho_0(\pi_1(Z))$ correspond with the disjoint union of circles on the two copies of $\partial N^{(1)}$, where the $2$-handles of $N$ are attached. This observation holds by the geometry of the regular icosahedron, and may not hold if we start with an arbitrary handle structure of $N$.

Here is a simple lemma on constructing non-zero degree maps.

\begin{lem}\label{observation}
If $M'$ is a closed oriented $3$-manifold which contains a codimension-$0$ submanifold $K$, such that there is a proper degree-$d$ map $h:(K,\partial K)\rightarrow (N^{(2)},\partial N^{(2)})$. Then there is a degree-$d$ map $f:M'\rightarrow N$, which is an extension of $h$.
\end{lem}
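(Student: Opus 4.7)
The approach is direct and topological: I will define $f$ on $M' \setminus \mathrm{int}(K)$ so that it agrees with $h$ on $\partial K$, sends the complement essentially into the $3$-handles of $N$, and misses a generic point of $\mathrm{int}(N^{(2)})$ so the degree comes entirely from $h$. Recall that $N = N^{(2)} \cup \bigsqcup_j D^3_j$, where each $3$-handle $D^3_j$ is a closed $3$-ball glued to $N^{(2)}$ along a $2$-sphere component $S^2_j$ of $\partial N^{(2)}$. Properness of $h$ gives $h^{-1}(\partial N^{(2)}) = \partial K$, so each connected component $\Sigma$ of $\partial K$ is mapped by $h$ into exactly one sphere $S^2_{j(\Sigma)}$.

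For each connected component $W$ of $\overline{M' \setminus \mathrm{int}(K)}$, with boundary components $\Sigma_1, \ldots, \Sigma_r \subset \partial K$, I would pinch $W$ to a wedge of cones. Pick an interior point $p_W \in \mathrm{int}(W)$ and pairwise disjoint (except at $p_W$) embedded arcs $\alpha_i$ from $p_W$ to $\Sigma_i$; a regular neighborhood of $\partial W \cup \bigcup_i \alpha_i$ has complement whose collapse to a point produces a quotient $\pi_W \colon W \to \bigvee_{i=1}^r C\Sigma_i$ in which all cone points are identified with the image of $p_W$. Fix an auxiliary basepoint $q_0 \in N$ and, for each $i$, a path $\eta_i$ in $N$ from $q_0$ to the center $c_{j(i)}$ of $D^3_{j(i)}$. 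Map each cone $C\Sigma_i$ into $N$ by following $\eta_i$ on a small neighborhood of the cone point (so the cone point goes to $q_0$) and by the cone of $h|_{\Sigma_i} \colon \Sigma_i \to S^2_{j(i)}$ on the rest, which lands in $CS^2_{j(i)} = D^3_{j(i)}$ and restricts to $h|_{\Sigma_i}$ on $\Sigma_i$. Composing with $\pi_W$ gives $f|_W$, continuous across $\partial K$ by construction.

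For the degree, pick a generic $q \in \mathrm{int}(N^{(2)})$. The image $f(\overline{M' \setminus \mathrm{int}(K)})$ lies in $\bigsqcup_j D^3_j \cup \bigcup_i \eta_i$, i.e.\ the $3$-handles together with the finite $1$-complex of chosen auxiliary paths in $N^{(2)}$; taking $q$ off this $1$-complex yields $f^{-1}(q) \subset \mathrm{int}(K)$ and $\deg f = \deg h = d$. The main technical hiccup is the wedge-point consistency: the cone points of distinct $C\Sigma_i$'s naturally target the centers of different $3$-handles, and the detour through $q_0$ via the $\eta_i$ is what makes the construction globally well-defined — while staying $1$-dimensional in $N^{(2)}$ so as not to perturb the degree count.
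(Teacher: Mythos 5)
Your proof is correct and takes essentially the same approach as the paper: pinch each component of $M'\setminus \mathrm{int}(K)$ onto a wedge of cones over its boundary spheres (the paper packages this as the mapping cone $E_q$ of $[q]\to\bigsqcup B_i^3$), map each cone into the corresponding $3$-handle with an auxiliary path to resolve the wedge-point, and compute the degree at a regular value of $h$ in $\mathrm{int}(N^{(2)})$ that misses the $1$-dimensional image of the pinched complement.
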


\begin{proof}
Since each component of $N\setminus N^{(2)}$ is a $3$-handle, it has the $2$-sphere as its boundary. So for each component $Q$ of $M'\setminus int(K)$, $h|_{\partial Q}$ maps $\partial Q$ to the disjoint union of a few $2$-spheres. Suppose $Q$ has $q$ boundary components. Let $[q]$ be the  topological space consists of $q$ points with discrete topology, and let $\cup_{i=1}^q B_i^3$ be the disjoint union of $q$ $3$-balls. Then we use $E_q$ to denote the mapping cone of $[q]\rightarrow \cup_{i=1}^q B_i^3$ which maps the $i$th point $i\in [q]$ to the center of $B_i^3$.

Now we can define a map $e: Q\rightarrow E_q$. On a collar neighborhood of $\partial Q$, $e|_{\partial Q\times I}$ is defined by $$\partial Q\times I\xrightarrow{h|_{\partial Q}\times I} \cup_{i=1}^q S^2\times I\rightarrow \cup_{i=1}^q B^3.$$ For the remaining part of $Q$, $e$ maps it to the graph in $E_q$, which is the cone over $q$ points. Then there is an obvious map from $E_q$ to $N$, which maps the $i$th $3$-ball in $E_q$ to the corresponding $3$-handle in $N$ (whose boundary $2$-sphere is the image of the $i$th boundary component of $Q$) by homeomorphism, and maps the $q+1$-vertex graph in $E_q$ to a graph in $N$ which is the union of $q$ paths.

The composition of these two maps gives $f|_Q:Q\rightarrow N$, and $f|_K$ is defined to be equal to $h$. So we get a map $f:(M',K)\rightarrow (N,N^{(2)})$ such that $f|_K: K\rightarrow N^{(2)}$ is a proper degree-$d$ map and $f(M'\setminus K)\cap N^{(2)}$ is a $1$-dimensional subcomplex in $N^{(2)}$. So for any generic point $m\in int(N^{(2)})$, $f^{-1}(m)=h^{-1}(m)$, and the sums of local degrees are both equal to $\deg(h)=d$. So $f:M'\rightarrow N$ is a degree-$d$ map.
\end{proof}

Now we are ready to prove Theorem \ref{main}:

\begin{proof}

For any closed hyperbolic $3$-manifold $M$, we have already constructed an immersed $\pi_1$-injective $2$-complex $j:Z\looparrowright M$, such that $\mathbb{H}^3/j_*(\pi_1(Z))$ is homeomorphic to $\mathbb{H}^3/\rho_0(\pi_1(Z))$, which is homeomorphic to $int(K)$, with respect to their orientations. Let $q:\bar{M}\rightarrow M$ be the infinite cover of $M$ such that $q_*(\pi_1(\bar{M}))=j_*(\pi_1(Z))$, then $\bar{M}$ is homeomorphic to $int(K)$ with respect to their orientations. By shrinking the boundary of $K$ into its interior a little bit, we get an embedding $\iota:K\hookrightarrow \bar{M}$, and $\iota(K)$ is a compact subset of $\bar{M}$.

Building on Wise's work (\cite{Wi}), Agol showed that the groups of hyperbolic $3$-manifolds are LERF (\cite{Ag}). By Scott's equivalent formulation of LERF (\cite{Sc}), there is an intermediate finite cover $M'\rightarrow M$ of $\bar{M}\rightarrow M$, such that $\iota(K)\subset \bar{M}$ projects into $M'$ by homeomorphism. We still use $K$ to denote the projection of $\iota(K)$ in $M'$, since it is homeomorphic to $K$.

Now we need only to construct a proper degree-$2$ map $h:(K,\partial K)\rightarrow (N^{(2)},\partial N^{(2)})$, then Lemma \ref{observation} implies $M'$ $2$-dominates $N$.

$h|_{H_1\cup H_2}$ maps $H_1$ and $H_2$ to $N^{(1)}$ by the identification between $H$ and $N^{(1)}$, which is an orientation preserving trivial $2$-sheet cover. Moreover, $h$ maps $A_{i,1}$ and $A_{i,2}$ to $A_i$ by orientation-preserving homeomorphism.

For $K\setminus int(H_1\cup H_2)=\cup (S_i\times I)$, each $S_i\times I$ is mapped to the $i$th $2$-handle of $N^{(2)}$. Recall that $S_i \times I$ is pasted to $H_1 \cup H_2$ by an orientation reversing homeomorphism $\phi_i:C_{i,1}\times I \cup C_{i,2}\times I\rightarrow A_{i,1} \cup A_{i,2}$, and the $i$th $2$-handle $D_i\times I$ of $N$ is pasted to $N^{(1)}$ by an orientation reversing homeomorphism $(\partial D_i)\times I\rightarrow A_i$. Here we can assume the orientation of the image of $\partial D_i$ and the orientation of the image of $C_{i,1}$ and $C_{i,2}$ on $\partial N^{(1)}$ coincide. Then there is a proper degree-$2$ map $p_i:S_i\rightarrow D_i$ such that $p_i|_{C_{i,1}}: C_{i,1}\rightarrow \partial D_i$ and $p_i|_{C_{i,2}}: C_{i,2}\rightarrow \partial D_i$ are orientation preserving homeomorphisms. Here we can first pinch $S_i$ to the wedge of two discs, then an obvious map to the disc can be defined. Now we define $h|_{S_i\times I}:S_i\times I \rightarrow D_i\times I$ by $p_i\times I$, which extends the definition of $h$ on $H_1\cup H_2$ to $K$. Then $h:(K,\partial K) \rightarrow (N^{(2)},\partial N^{(2)})$ is a proper degree-$2$ map, by considering any point in $int(H_1\cup H_2)$.
\end{proof}

\section{$\pi_1$-Injectivily of Immersed $2$-complex}\label{injectivity}

In this section, we will prove two technical results which are stated in Section \ref{construction2} without a proof: Proposition \ref{quasi} and Theorem \ref{claim}. These two results are both proved by estimation in hyperbolic geometry. To prove the first result, we need only to do estimation on the standard model (union of totally geodesic subsurfaces in $\mathbb{H}^3$). To prove the second result, we need to do estimation on small deformations of the standard model, which is more complicated. Luckily, we have established some estimations in \cite{Su}, which will be useful for our proof.

Actually, Proposition \ref{quasi} and Theorem \ref{claim} also hold for other nice-constructed immersed $2$-complexes in closed hyperbolic $3$-manifolds, by using Kahn-Markovic and Liu-Markovic's immersed almost totally geodesic subsurfaces. However, it is difficult to give a clean formulation for the general case, so we only deal with the special case which is necessary for this paper.

\subsection{Modified Path and Estimation for the Standard Model}

In this subsection, we will first introduce the concept of {\it modified path}, then prove Proposition \ref{quasi}.

Let $Z^{(0)}$ be the union of $0$-cells in $Z$, and let $Z^{(1)}$ be the union of $0$- and $1$-cells in $Z$. We use $\widetilde{Z}^{(0)}$ and $\widetilde{Z}^{(1)}$ to denote the preimage of $Z^{(0)}$ and $Z^{(1)}$ in $\widetilde{Z}$ respectively. The closure of a component of $\widetilde{Z}\setminus \widetilde{Z}^{(1)}$ will be called a {\it piece} of $\widetilde{Z}$.

For a path in $\widetilde{Z}$ which lies in a piece of $\widetilde{Z}$, if $i_0$ maps this path to a geodesic arc in $\mathbb{H}^3$, we simply call it a geodesic arc in $\widetilde{Z}$. For two geodesic arcs in $\widetilde{Z}$ sharing a common point, we measure the angle between these two geodesic arcs by measuring the angle between their images in $\mathbb{H}^3$.

For any two points $x,y \in \widetilde{Z}$, let $\gamma$ be the shortest path in $(\widetilde{Z},d_0)$ from $x$ to $y$. Then $\gamma$ is a concatenation of geodesic arcs, and each geodesic arc lies in a piece of $\widetilde{Z}$. In the concatenation, $\gamma$ may contain some relatively short geodesic arcs, which is not convenient for our geometric estimation. So we first introduce the concept of {\it modified path}, which eliminates such short geodesic arcs.

For the oriented shortest path $\gamma$ in $(\widetilde{Z},d_0)$ from $x$ to $y$, if $\gamma$ does not contain any geodesic arc in $\widetilde{Z}^{(1)}$, let $x_1,x_2,\cdots,x_n$ be the sequence of intersection points in $\widetilde{Z}^{(1)}\cap (\gamma\setminus \{x,y\})$, which follows the orientation of $\gamma$. If $\gamma$ contains some geodesic arc in $\widetilde{Z}^{(1)}$, for each geodesic arc in $\widetilde{Z}^{(1)} \cap \gamma$, we only record its initial and terminal points in $\widetilde{Z}^{(0)}$. In this case, let $x_1,x_2,\cdots,x_n$ be the sequence of transversal intersection points in $\widetilde{Z}^{(1)}\cap (\gamma\setminus \{x,y\})$ and the initial and terminal points of geodesic arcs in $\widetilde{Z}^{(1)} \cap \gamma$, such that this sequence is following the orientation of $\gamma$. The sequence $x_1,x_2,\cdots,x_n$ is called the {\it intersection sequence} of $\gamma$.

For the intersection sequence $x_1,x_2,\cdots,x_n$, we do the following process to define the {\it modified sequence}, by beginning with $x_1$. If $d_0(x_1,x_2)\geq \frac{R}{64}$, then we put $x_1$ at the first position of the modified sequence, and denote it by $y_1$. If $d_0(x_1,x_2)<\frac{R}{64}$, let $x_1,x_2,\cdots,x_j$ be the maximal consecutive subsequence of $x_1,x_2,\cdots,x_n$, such that $d_0(x_i,x_{i+1})<\frac{R}{64}$ holds for $i=1,2,\cdots,j-1$. Then the $1$-cells containing $x_1,x_2,\cdots,x_j$ share a common $0$-cell, and denote it by $y_1$. We put $y_1$ at the first position of the modified sequence, and call it a {\it modified point} in this case. Then we turn to consider $x_2$ in the first case ($d_0(x_1,x_2)\geq \frac{R}{64}$), and consider $x_{j+1}$ in the second case ($d_0(x_1,x_2)<\frac{R}{64}$). By applying the same process inductively, we get the modified sequence $y_1,y_2,\cdots,y_m$.

Note that the maximal consecutive subsequence $x_i,\cdots,x_{i+j}$ satisfying $d_0(x_{i+k},x_{i+k+1})$ for $k=0,\cdots,j-1$ has length at most $3$, since the angle between two $1$-cells sharing a $0$-cell in $\widetilde{Z}$ is $2\theta_0\approx 0.3524\pi$, and $3\cdot0.3524\pi>\pi$.

By the definition of modified sequence, two adjacent points $y_i$ and $y_{i+1}$ in the modified sequence lie in the same piece of $\widetilde{Z}$. Then the {\it modified path} $\gamma'$ of $\gamma$ is defined to be the concatenation of geodesic arcs connecting $x$ to $y_1$, $y_1$ to $y_2$, $\cdots$, $y_m$ to $y$, and each such geodesic arc lies in a piece of $\widetilde{Z}$.

Now we are ready to prove Proposition \ref{quasi}, and we restate the result here. In the following part of this section, we will use $d$ to denote the metric $d_{\mathbb{H}^3}$ on $\mathbb{H}^3$, when it does not cause any confusion.

\begin{prop}\label{quasi1}
When $R$ is large enough, $i_0:(\widetilde{Z},d_0) \rightarrow (\mathbb{H}^3, d_{\mathbb{H}^3})$ is an embedding, and also a quasi-isometric embedding. In particular, $\rho_0:\pi_1(Z)\rightarrow PSL_2(\mathbb{C})$ is an injective map.
\end{prop}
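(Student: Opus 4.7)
The plan is to show that shortest paths in $(\widetilde{Z},d_0)$ are sent by $i_0$ to quasi-geodesics in $\mathbb{H}^3$, via the modified path construction already set up in the subsection. This will simultaneously give injectivity and the quasi-isometric embedding statement, since a quasi-geodesic in $\mathbb{H}^3$ between distinct endpoints has endpoints separated by positive distance.

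Fix two points $x,y \in \widetilde{Z}$ and let $\gamma$ be a shortest path from $x$ to $y$ in $(\widetilde{Z},d_0)$, with modified path $\gamma'$ as defined above. I will establish two comparisons:
\begin{itemize}
\item[(a)] $d_0(x,y) \leq \text{length}(\gamma') \leq A\, d_0(x,y) + B$ for absolute constants $A,B$;
\item[(b)] $i_0(\gamma')$ is an $(A',B')$-quasi-geodesic in $\mathbb{H}^3$, for absolute constants $A',B'$, once $R$ is large.
\end{itemize}
For (a), the modification only replaces each maximal run $x_i,\ldots,x_{i+j}$ of intersection points with pairwise $d_0$-distance $<R/64$ by a single vertex $y_l$ at the shared $0$-cell whose incident $1$-cells contain all the $x_{i+k}$. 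Because any such run has length at most $3$ (the angle $2\theta_0 \approx 0.35\pi$ between $1$-cells at a $0$-cell forces a geometric obstruction after more than three consecutive close intersections), each modification changes length by $O(R)$, so the cumulative distortion is linearly controlled.

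For (b), I invoke the standard hyperbolic quasi-geodesic criterion: a concatenation of geodesic segments in $\mathbb{H}^3$ each of length $\geq \ell_0$ and meeting consecutively at exterior angles $\geq \theta_{\min}$ is an $(A',B')$-quasi-geodesic for constants depending only on $\ell_0,\theta_{\min}$, once $\ell_0$ is large relative to $\theta_{\min}^{-1}$. By the modification rule, every segment of $\gamma'$ (except possibly the first and last, which are absorbed into the additive constant) has $d_{\mathbb{H}^3}$-length at least $R/64 - O(1)$, which is arbitrarily large once $R$ is. The exterior angle lower bound must be checked at each junction type in $\gamma'$, namely: at modified points $y_l$ sitting at a $0$-cell, where consecutive $1$-cells meet at exactly $2\theta_0$, giving exterior angle $\pi - 2\theta_0 > \pi/3$; at non-modified transversal crossings through a $1$-cell $t$, where on one side the arc lies in a totally geodesic piece meeting $t$ either along a seam (perpendicular to $t$) or at an angle prescribed by Construction \ref{standard}, and on the other side it lies in a different totally geodesic piece or in a $1$-cell of $\widetilde{Z}^{(1)}$; and at initial/terminal points of sub-arcs contained in $\widetilde{Z}^{(1)}$, where the exit angle is again prescribed by Construction \ref{standard} (the $\pi - 2\theta_0$ rotation condition, together with the $s(C)=1$ and $\mathbf{hl}_\Pi(C)=r_iR/2$ conditions determining how seams hit cuffs).

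The main obstacle will be the case analysis for step (b), in particular the uniform lower bound on the exterior angle at junctions where a $1$-cell of $\widetilde{Z}^{(1)}$ meets a totally geodesic subsurface while $\gamma'$ transitions between them. One cannot argue by a single angle computation: the relevant angles depend on whether the incoming piece is a $1$-cell, a cornered annulus, or a pair of pants, and on whether $\gamma'$ crosses $t$ transversally or runs a positive distance along $t$ before exiting. However, the geometric data frozen in Construction \ref{standard} (exact arc length $L$, exact half-length $r_iR/2$, exact shearing $1$, rotation of the normal frames by $\pi - 2\theta_0$, the fixed set of tangent directions $\vec v_a,\ldots,\vec v_f$) reduces each case to an explicit computation in $\mathbb{H}^3$ whose answer is an absolute positive angle, and a routine check shows that every such angle is bounded below by some $\theta_{\min}>0$ independent of $R$. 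With $\ell_0 = R/64$ and this $\theta_{\min}$, the quasi-geodesic criterion applies for all sufficiently large $R$, completing the proof.
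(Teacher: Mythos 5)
Your overall strategy matches the paper's: introduce the modified path, bound below the lengths of its constituent segments, bound below the angles at its break points, and then invoke a concatenation lemma to show $i_0$ is a quasi-isometric embedding. However, the angle analysis — which you yourself identify as "the main obstacle" — contains a genuine error, and the remaining cases are declared "routine" without being carried out, which is exactly where the content of the proof lies.

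The specific error is at the modified points. You claim that at a modified point $y_l$ the two consecutive $1$-cells "meet at exactly $2\theta_0$, giving exterior angle $\pi - 2\theta_0 > \pi/3$." But the two segments of $\gamma'$ incident to $y_l$ are \emph{not} $1$-cells: they are geodesic arcs $\overline{y_{l-1}y_l}$ and $\overline{y_ly_{l+1}}$ lying inside pieces of $\widetilde{Z}$ that happen to have $y_l$ as a corner, and there is no reason for the angle between them to equal $2\theta_0$ (nor $\pi - 2\theta_0$). In fact, since $y_{l\pm 1}$ can land anywhere inside wedges around the $0$-cell, the angle $\angle y_{l-1} y_l y_{l+1}$ can be much smaller than $2\theta_0$. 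What actually holds — and requires an argument — is the lower bound $\angle y_{l-1} y_l y_{l+1} \geq 3\cdot 2\theta_0 - \pi \approx 0.0572\pi$, which the paper obtains by flattening adjacent pieces into a plane (bounding the flattened angle above by $\pi$) and using that a path beginning in one piece must traverse at least two other pieces before returning. That gives a positive bound, but only $> \pi/18$, far below your claimed $> \pi/3$, and the argument is essential rather than "routine". Your claimed bound, besides being unjustified, would not survive a careful inspection of the geometry around a $0$-cell.

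The non-modified case also needs a derivation you omit. There the relevant fact is that the dihedral angle in $\mathbb{H}^3$ between two adjacent pieces of $\widetilde{Z}$ is $\geq 2\pi/5$, together with the fact that $\gamma$ is a \emph{shortest} path (so the angle $\angle x_{j-1}x_jx_{j+1}$ it makes when crossing a $1$-cell is at least that dihedral angle); one must then estimate that replacing $x_{j\pm 1}$ by the modified points $y_{i\pm 1}$ changes this angle only by a quantity of size $O(e^{-R/64})$. Your description of junction "types" (seams perpendicular to $t$, angles prescribed by Construction \ref{standard}, etc.) is about how pieces abut $1$-cells, not about what the shortest path does, so it misses the two ideas that actually produce the bound. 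Finally, your statement of the quasi-geodesic criterion in terms of a \emph{lower} bound on the exterior angle is backwards: a straight geodesic has exterior angle zero at every point, so the criterion must bound the interior angle $\angle y_{i-1}y_iy_{i+1}$ away from zero (so the path does not double back), which is also what the paper's cited Lemma 4.8(1) of \cite{LM} requires. So the framework is right, but the part you defer is where a correct argument would have to live, and the one quantitative claim you do make there is wrong.
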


\begin{proof}
For any $x,y \in \widetilde{Z}$, with $d_0(x,y)\geq\frac{R}{4}$, let $\gamma$ be the shortest path in $(\widetilde{Z},d_0)$ from $x$ to $y$, with intersection sequence $x_1,x_2,\cdots,x_n$. Let $\gamma'$ be the modified path of $\gamma$, and let $y_1,y_2,\cdots,y_m$ be the modified sequence.

For any two adjacent points $y_i$ and $y_{i+1}$ in the modified sequence, if the geodesic arc in $\widetilde{Z}$ from $y_i$ to $y_{i+1}$ intersects with the preimage of some $S_i$, then $d_0(y_i,y_{i+1})\geq \frac{R}{2}$, by step IV in the construction of $Z$ (Section \ref{construction2}).

Now we suppose the geodesic arc from $y_i$ to $y_{i+1}$ does not intersect with the preimage of any $S_i$. If both $y_i$ and $y_{i+1}$ are modified points, then they are two distinct $0$-cells in $\widetilde{Z}$, so $d_0(y_i,y_{i+1})\geq R$. If both of them are not modified points, we have $d_0(y_i,y_{i+1})\geq \frac{R}{64}$. If one of them is a modified point, and the other one is not, then $d_0(y_i,y_{i+1})\geq \frac{R}{128}$ holds, by hyperbolic geometry on $\mathbb{H}^2$ and $2\theta_0\approx 0.3524\pi$. So $d_0(y_i,y_{i+1})\geq \frac{R}{128}$ always holds.

Now we claim that $\angle{y_{i-1}y_iy_{i+1}}$ in $\mathbb{H}^3$ has some positive lower bound, say $\angle{y_{i-1}y_iy_{i+1}}>\frac{\pi}{18}$ in $\mathbb{H}^3$. Here "$\angle{y_{i-1}y_iy_{i+1}}$ in $\mathbb{H}^3$" actually means $\angle{i_0(y_{i-1})i_0(y_i)i_0(y_{i+1})}$, but we use this notation to simplify the expression.

Proof of the claim:\\
{\bf Case I.} $y_i$ is not a modified point, so $y_i=x_j$. Since $\angle{x_{j-1}x_jx_{j+1}}\geq \frac{2\pi}{5}$ in $\mathbb{H}^3$, we need only to show that $\angle{x_{j+1}y_iy_{i+1}}$ and $\angle{x_{j-1}y_iy_{i-1}}$ are both very small. Here we only show that $\angle{x_{j+1}y_iy_{i+1}}$ is very small, and the proof for $\angle{x_{j-1}y_iy_{i-1}}$ is exactly the same.

If $y_{i+1}$ is not a modified point, then $x_{j+1}=y_{i+1}$, so $\angle{x_{j+1}y_iy_{i+1}}=0$.

So we suppose $y_{i+1}$ is a modified point, and $y_{i+1}$ corresponds with $x_{j+1}$. If the shortest path from $y_i$ to $y_{i+1}$ intersect with the preimage of some $S_i$, then $d_0(y_i,y_{i+1})\geq\frac{R}{2}$. By hyperbolic geometry, we know that $d_0(x_{j+1},y_{i+1})\leq \frac{R}{64}+1$. So $\angle{x_{j+1}y_iy_{i+1}}\leq e^{-\frac{R}{4}}\leq 8e^{-\frac{R}{64}}$ in $\mathbb{H}^3$.

If the shortest path from $y_i$ to $y_{i+1}$ does not intersect with the preimage of any $S_i$, then the flattened picture (put two adjacent pieces of $\widetilde{Z}$ in the same hyperbolic plane) is shown in Figure 6 a). We only draw an Euclidean picture to show the position of these points and geodesics, although the real picture lies in $\mathbb{H}^2$. The following computation in the proof of Case I will be in $\mathbb{H}^2$.

By Construction \ref{standard}, we have $\angle{y_iy_{i+1}x_{j+1}}\leq\angle{x_{j+1}y_{i+1}x_{j+2}}=2\theta_0$. Since $y_i$ is not a modified point and $y_{i+1}$ is, $d(y_i,x_{j+1})\geq\frac{R}{64}$ and $d(x_{j+1},x_{j+2})\leq\frac{R}{64}$. Then by hyperbolic geometry in $\mathbb{H}^2$, we have
\begin{equation}\label{3}
\frac{\sinh{d(x_{j+1},x_{j+2})}}{\sin{2\theta_0}}\geq \sinh{d(y_{i+1},x_{j+2})}
\end{equation}
and
\begin{equation}\label{4}
\frac{\sinh{d(y_i,x_{j+2})}}{\sin{\angle{y_iy_{i+1}x_{j+2}}}}=\frac{\sinh{d(y_{i+1},x_{j+2})}}{\sin{\angle{x_{j+1}y_iy_{i+1}}}}.
\end{equation}
Here $2\theta_0\leq \angle{y_iy_{i+1}x_{j+2}} \leq 4\theta_0$.

So
\begin{equation}\label{5}
\begin{split}
& \sin{\angle{x_{j+1}y_iy_{i+1}}}=\frac{\sinh{d(y_{i+1},x_{j+2})}\cdot\sin{\angle{y_iy_{i+1}x_{j+2}}}}{\sinh{d(y_i,x_{j+2})}} \leq\frac{\sinh{d(y_{i+1},x_{j+2})}}{2\sinh{d(y_i,x_{j+1})}\cdot \sinh{d(x_{j+1},x_{j+2})}}\\
& \leq \frac{\sinh{d(x_{j+1},x_{j+2})}}{2\sinh{d(y_i,x_{j+1})}\cdot \sinh{d(x_{j+1},x_{j+2})}\cdot\sin{2\theta_0}}=\frac{1}{\sinh{d(y_i,x_{j+1})}\cdot 2\sin{2\theta_0}}\leq 4e^{-\frac{R}{64}}.
\end{split}
\end{equation}
In this case $\angle{x_{j+1}y_iy_{i+1}}\leq 8e^{-\frac{R}{64}}$ in $\mathbb{H}^3$ also holds, and the same estimation holds for $\angle{x_{j-1}y_iy_{i-1}}$.

Since the angle between two adjacent pieces of $\widetilde{Z}$ in $\mathbb{H}^3$ is at least $\frac{2\pi}{5}$, and $\gamma$ is the shortest path in $(\widetilde{Z},d_0)$, $\angle{x_{j-1}x_jx_{j+1}}\geq \frac{2\pi}{5}$ holds in $\mathbb{H}^3$. So by the above estimation, when $y_i$ is not a modified point, $\angle{y_{i-1}y_iy_{i+1}}\geq \frac{\pi}{5}$ in $\mathbb{H}^3$.

{\bf Case II.} $y_i$ is a modified point, then $y_i$ corresponds with a consecutive subsequence $x_j,x_{j+1}$ or $x_j,x_{j+1},x_{j+2}$ of $x_1,x_2,\cdots,x_n$.

We first suppose that both $y_{i-1}$ and $y_{i+1}$ are not modified points, then the flattened picture for the first subcase ($y_i$ corresponds with $x_j,x_{j+1}$) is shown in Figure 6 b). So in this flattened picture, $\angle{x_{j-1}y_ix_{j+2}}\leq \pi$. Since a path starting at some point in $\widetilde{Z}$ need to go through at least two other pieces of $\widetilde{Z}$ to return to the original piece and $2\theta_0\approx 0.3524\pi$, the geometry gives us $\angle{y_{i-1}y_iy_{i+1}}=\angle{x_{j-1}y_ix_{j+2}}\geq 3\times 0.3524\pi-\pi= 0.0572\pi>\frac{\pi}{18}$ in $\mathbb{H}^3$. The proof for the second subcase ($y_i$ corresponds with $x_j,x_{j+1},x_{j+2}$) is exactly the same.

\begin{center}
\psfrag{a}[]{$y_i=x_j$} \psfrag{b}[]{$x_{j+1}$} \psfrag{c}[]{$x_{j+2}$}
\psfrag{d}[]{$y_{i+1}$} \psfrag{e}[]{a)} \psfrag{f}[]{b)} \psfrag{g}[]{$x_{j-1}=y_{i-1}$}
\psfrag{h}[]{$y_i$} \psfrag{i}[]{$x_{j+2}=y_{i+1}$} \psfrag{j}[]{$x_{j}$} \psfrag{k}[]{$x_{j+1}$}
\includegraphics[width=5.5in]{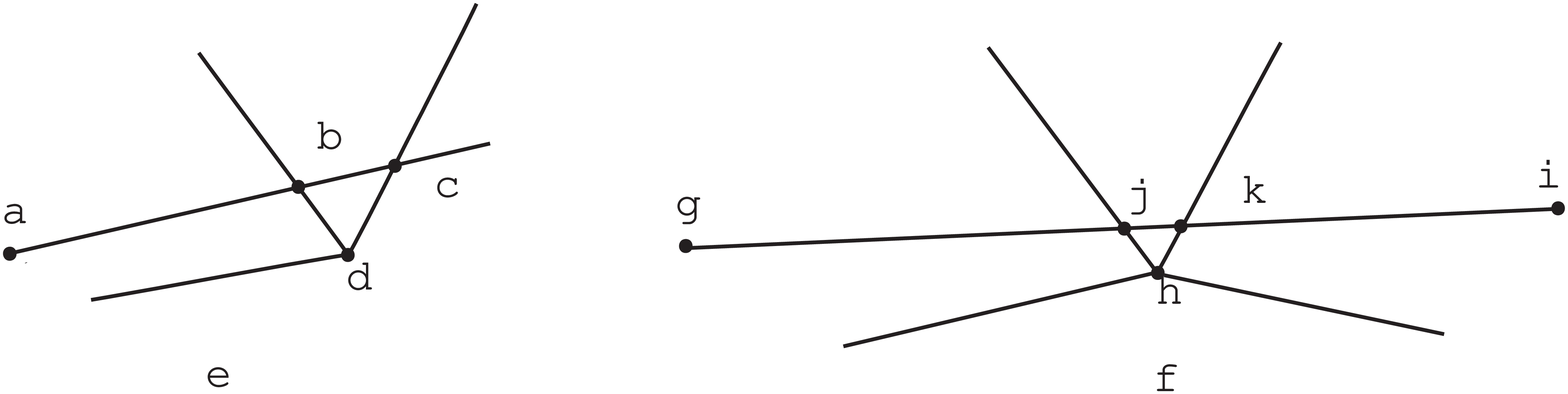}
 \centerline{Figure 6}
\end{center}

If $y_{i-1}$ is a modified point, then $d_0(y_{i-1},x_{j-1})\leq \frac{R}{64}+1$. Moreover, since $y_{i-1}$ and $y_i$ are two different points in $\tilde{Z}^{(0)}$, $d_0(y_{i-1},y_i)\geq R$ holds, so $\angle{x_{j-1}y_iy_{i-1}}\leq e^{-\frac{R}{2}}$. The same argument shows that $\angle{x_{j+2}y_iy_{i+1}}\leq e^{-\frac{R}{2}}$ holds when $y_{i+1}$ is a modified point. So in this case $\angle{y_{i-1}y_iy_{i+1}}\geq 0.0572\pi -2e^{-\frac{R}{2}}> \frac{\pi}{18}$ in $\mathbb{H}^3$, and the proof of the claim is done.

So we know that $d_0(y_i,y_{i+1})\geq \frac{R}{128}$ and $\angle{y_{i-1}y_iy_{i+1}}\geq \frac{\pi}{18}$ in $\mathbb{H}^3$. Actually, the same argument implies $\angle{xy_1y_2}\geq \frac{\pi}{18}$ in $\mathbb{H}^3$ when $d_0(x,x_1)\geq \frac{R}{32}$, and so does $\angle{y_{m-1}y_my}$.

Since $d_0$ is the path metric induced by the hyperbolic metric $d_{\mathbb{H}^3}$,
\begin{equation}\label{6}
d_{\mathbb{H}^3}(i_0(x),i_0(y))\leq d_0(x,y).
\end{equation}
On the other hand, since $\gamma'$ is a path from $x$ to $y$,
\begin{equation}\label{7}
d_0(x,y)\leq d_0(x,y_1)+d_0(y_m,y)+\sum_{j=1}^{m-1} d_0(y_j,y_{j+1}).
\end{equation}

Lemma 4.8 (1) of \cite{LM} implies that, when $d_0(x,x_1), d_0(y,x_n) \geq \frac{R}{32}$,
\begin{equation}\label{8}
\begin{split}
& d_{\mathbb{H}^3}(i_0(x),i_0(y)) \geq d_0(x,y_1)+d_0(y_m,y)+\sum_{j=1}^{m-1} d_0(y_j,y_{j+1})-m(2\log{(\sec{\frac{17\pi}{36}})}+1)\\
& \geq \frac{9}{10}(d_0(x,y_1)+d_0(y_m,y)+\sum_{j=1}^{m-1} d_0(y_j,y_{j+1}))\geq \frac{9}{10}d_0(x,y).
\end{split}
\end{equation}

If $d_0(x,x_1)$ or $d_0(y,x_n)$ is less than $\frac{R}{32}$, the same argument gives
\begin{equation}\label{9}
d_{\mathbb{H}^3}(i_0(x),i_0(y))\geq \frac{9}{10}d_0(x,y)-\frac{R}{8}.
\end{equation}

Now we have that $i_0:(\widetilde{Z},d_0) \rightarrow (\mathbb{H}^3, d_{\mathbb{H}^3})$ is a quasi-isometric embedding, so $\rho_0:\pi_1(Z)\rightarrow PSL_2(\mathbb{C})$ is an injective map. Actually, the above argument shows that $d(i_0(x),i_0(y))\geq\frac{R}{10}$ if $d_0(x,y)\geq \frac{R}{4}$. Moreover, $i_0(x)\ne i_0(y)$ holds if $0<d_0(x,y)<\frac{R}{4}$, by the local geometry of $i_0(\widetilde{Z})$. So we have that $i_0:\widetilde{Z} \rightarrow \mathbb{H}^3$ is an embedding.
\end{proof}

\subsection{Estimation of the Deformations of $i_0$}

Now we turn to prove Theorem \ref{claim}. Actually, $j_*:\pi_1(Z)\rightarrow \pi_1(M)\subset PSL_2(\mathbb{C})$ lies in a continuous family of small deformations of $\rho_0:\pi_1(Z)\rightarrow PSL_2(\mathbb{C})$, and we will show that all the representations in this family satisfy Theorem \ref{claim}. Each such representation $\rho:\pi_1(Z)\rightarrow PSL_2(\mathbb{C})$ is accompanied with a $\pi_1(Z)$-equivariant partially defined map $i:\widetilde{Z}\rightarrow \mathbb{H}^3$, which serves as the geometric realization of $\rho$. We will do geometric estimation for $i:\widetilde{Z}\rightarrow \mathbb{H}^3$ to deduce desired properties of $\rho$.

Now we define an $1$-dimensional subcomplex $W\subset Z$. The $0$-cells, $1$-cells of $Z$, and all the circles in the pants decomposition of $S_i$ ($i=1,2,\cdots,k$) are contained in $W$. Besides these parts, for each pair of pants in $S_i$, three seams of the pants are contained in $W$; for each cornered annulus in $Z$, a few (one, two or four, depends on the number of corners of this cornered annulus) disjoint arcs from the corners to the circle boundary component are also contained in $W$. Moreover, the position of the end points  of these arcs on the circles are given by the following assignment. When considering $i_0:\widetilde{Z}\rightarrow \mathbb{H}^3$, each of the preimage of these arcs is mapped to a geodesic arc which is perpendicular with the corresponding bi-infinite geodesics it intersects with.

By the definition of $W$, each component of $Z\setminus W$ is a topological disc. Let $\widetilde{W}$ be the preimage of $W$ in $\widetilde{Z}$. Then for the embedding $i_0|_{\widetilde{W}}:\widetilde{W}\rightarrow \mathbb{H}^3$, all pair of geodesic (arcs) in $i_0(\widetilde{W})$ that intersect with each other are perpendicular with each other, except the case that the intersection point lies in $i_0(\widetilde{Z}^{(0)})$.

Now let us define a map $i:\widetilde{W}\rightarrow \mathbb{H}^3$, which realizes some $\rho:\pi_1(Z)\rightarrow PSL_2(\mathbb{C})$. Before the construction of $i$, we first need to choose some parameters. Recall that, for each $S_i$, $j(\partial S_i)$ is a multicurve in $\mathbb{Z}{\bold \Gamma}_{r_i R,\frac{\epsilon}{r_i R}}$ for $r_i \in \{1,2,4\}$.

\begin{parameter}\label{parameter}
\begin{itemize}
\item For each $1$-cell $t$ in $Z$, it is associate with the following parameters.
\begin{itemize}
\item A real number $\delta_t$, such that $|\delta_t|< \frac{\epsilon}{R}$.
\item Two vectors $\vec{u}_{1,t},\vec{u}_{2,t}\in S^2$, such that the angles between $\vec{u}_{1,t},\vec{u}_{2,t}$ and the corresponding vectors $\vec{v}_1, \vec{v}_2$ in Table 1 are less than $\frac{\epsilon}{R}$ respectively.
\item An element $S_t\in SO(3)$, such that $\Theta(\vec{v},S_t\vec{v})<\frac{\epsilon}{R}$ for any $\vec{v}\in S^2$.
\end{itemize}
\item  For each circle $C$ in the pants decomposition of $S_i$ which does not lie on the boundary of $S_i$, it is associated with two complex numbers $\xi_C$ and $\eta_C$, such that $|\xi_C|< \frac{\epsilon}{r_i R}$ and $|\eta_C|< \frac{\epsilon}{(r_i R)^2}$.
\item For each boundary component $C$ of $S_i$, it is associated with a complex number $\eta_C$ such that $|\eta_C|< \frac{\epsilon}{(r_i R)^2}$.
\end{itemize}
\end{parameter}

Then the map $i:\widetilde{W}\rightarrow \mathbb{H}^3$ is defined by the following conditions. Note that the parameters are given for $1$-cells and circles in $Z$, but not in $\widetilde{Z}$. So $i:\widetilde{W}\rightarrow \mathbb{H}^3$ is $\pi_1(Z)$-equivariant for some $\rho: \pi_1(Z)\rightarrow PSL_2(\mathbb{C})$, and the following conditions also define this $\rho:\pi_1(Z)\rightarrow PSL_2(\mathbb{C})$.

\begin{construction}\label{defor}
\begin{itemize}
\item Each bi-infinite line in $\widetilde{W}$ is subdivided to concatenation of compact arcs by its intersection with other arcs in $\widetilde{W}$. Then all the arcs in $\widetilde{W}$ are mapped to geodesic arcs in $\mathbb{H}^3$ (under the arc length parametrization induced by $(\widetilde{Z},d_0)$).
\item For each $0$-cell $x \in \widetilde{W}$ which is also a $0$-cell in $\widetilde{Z}$, it is associated with an orthonormal frame $\{\vec{e}_1,\vec{e}_2,\vec{e}_3\}$ at $i(x)$ with respect to the orientation of $\mathbb{H}^3$.
\item For each $1$-cell $t' \in \widetilde{W}$ which is also a $1$-cell in $\widetilde{Z}$, it is endowed with an orientation such that the oriented $1$-cell $t=p(t')$ in $Z$ corresponds with one of the oriented $1$-cell $a,b,c,d,e,f$ in $X'$. Suppose $t'$ travels from one $0$-cell $x$ to another $0$-cell $y$ in $\widetilde{Z}$, let $\{\vec{e}_1,\vec{e}_2,\vec{e}_3\}$ and $\{\vec{e}_1',\vec{e}_2',\vec{e}_3'\}$ be the two orthonormal frames at $i(x)$ and $i(y)$ respectively, then the following conditions hold.
\begin{itemize}
\item $i(t')$ is a geodesic arc from $i(x)$ to $i(y)$ with length equals $L+\delta_t$.
\item The tangent vectors of $i(t')$ at $i(x)$ and $i(y)$ are equal to $\vec{u}_{1,t}$ and $\vec{u}_{2,t}$ respectively (under the coordinate given by frames $\{\vec{e}_1,\vec{e}_2,\vec{e}_3\}$ and $\{\vec{e}_1',\vec{e}_2',\vec{e}_3'\}$).
\item The parallel transportation of $\{\vec{e}_1,\vec{e}_2,\vec{e}_3\}$ to $i(y)$ along $i(t')$ is equal to the composition of $S_t$ and the counterclockwise $\pi-2\theta_0$ rotation of $\{\vec{e}_1',\vec{e}_2',\vec{e}_3'\}$ about the vector $\vec{n} \in T_{i(q)}^1(M)$ in \eqref{2}.
\end{itemize}
\item For any circle $C$ in the pants decomposition of some $S_i$, the image of each component of $p^{-1}(C)$ under $i$ is a bi-infinite geodesic in $\mathbb{H}^3$. For any circle $C$ which lies in $\partial S_i$, the image of each component of $p^{-1}(C)$ under $i$ share the same limit points with the corresponding concatenation of geodesic arcs, on $\partial \mathbb{H}^3=S^2_{\infty}$.
\item For any arc in $\widetilde{W}$ which corresponds with a seam in a pair of pants, or goes from a corner point to the opposite circle in a cornered annulus of $Z$, it is mapped to a geodesic arc which is perpendicular with the corresponding bi-infinite geodesic.
\item For any circle $C$ in $S_i$ shared by two pair of pants $\Pi_1$ and $\Pi_2$ in the oriented surface $S_i$,
$$
\left\{ \begin{array}{l}
        \bold{hl}_{\Pi_1}(C)=\bold{hl}_{\Pi_2}(C)=\frac{r_i R}{2}+\xi_C, \\
        s(C)=1+\eta_C,
\end{array} \right.
$$
holds. For the definition of $\bold{hl}_{\Pi}(C)$ and $s(C)$ in this context, see Construction \ref{standard}.
\item For any oriented boundary component $C_i$ of $S_i$ which is the cuff of a pair of pants $\Pi\subset S_i$, in step III of the construction of $j:Z\rightarrow M$, we have chosen an arc $\alpha_i\subset W$ which goes from some $0$-cell of $Z$ to $C_i$. Take an arbitrary bi-infinite line $\beta\subset \widetilde{W}$ which is a component of $p^{-1}(C_i)$, and any arc $\alpha$ in $\widetilde{W}$ that intersects with $\beta$ and projects to $\alpha_i$. Let $\alpha'$ be the arc that intersects with $\beta$ and projects to a seam of $\Pi$, such that $i_0(\alpha')$ is the nearest such arc from $i_0(\alpha)$. Then the tangent vector of $i(\alpha')$ is the $1+\pi i+\eta_{C_i}$ translation of the tangent vector of $i(\alpha)$ along $i(\beta)$. Moreover, $|\bold{hl}_{\Pi}(C_i)-\frac{r_iR}{2}|<\frac{\epsilon}{R}|$ holds. Here the value of $\bold{hl}_{\Pi}(C_i)$ is determined by the geometry of $i|_{\widetilde{Z}^{(1)}}$.
\end{itemize}
\end{construction}

Note that when $\delta_t=0$, $\vec{u}_{1,t}$, $\vec{u}_{2,t}$ are equal to the corresponding vectors $\vec{v}_1$, $\vec{v}_2$ respectively, and $S_t=id_{SO(3)}$ for any $1$-cell $t$; while $\xi_C=0$ and $\eta_C=0$ for any circle $C$, the map $i:\widetilde{W}\rightarrow \mathbb{H}^3$ is exactly the restriction of our standard model $i_0:\widetilde{Z}\rightarrow \mathbb{H}^3$ on $\widetilde{W}$ (Construction \ref{standard}). By the construction of $j:Z\rightarrow M$ in Section \ref{construction2}, we can choose proper parameters in Parameter \ref{parameter} such that $i:\widetilde{W}\rightarrow \mathbb{H}^3$ equals $\widetilde{j}|_{\widetilde{W}}:\widetilde{W}\rightarrow \widetilde{M}=\mathbb{H}^3$.

Under small parameters in Parameter \ref{parameter}, $i:(\widetilde{W},d_0)\rightarrow (\mathbb{H}^3,d_{\mathbb{H}^3})$ has the following nice property.
\begin{thm}\label{technical}
There exists constants $\hat{\epsilon}>0$ and $\hat{R}>0$, such that for any positive numbers $\epsilon<\hat{\epsilon}$ and $R>\hat{R}$, the following statement holds. For any parameters in Parameter \ref{parameter} with $\epsilon$ and $R$ as above, the map $i:(\widetilde{W},d_0|_{\widetilde{W}})\rightarrow (\mathbb{H}^3,d_{\mathbb{H}^3})$ given by Construction \ref{defor} is a quasi-isometric embedding.
\end{thm}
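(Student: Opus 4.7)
My plan is to carry over the proof of Proposition \ref{quasi1} essentially verbatim, replacing the standard model $i_0$ by its small deformation $i$ and absorbing the perturbation from Parameter \ref{parameter} into the error terms. The underlying geometric skeleton (pieces of $\widetilde{Z}$ glued along $1$-cells, with surface pieces carrying a pants decomposition) is the same; only the lengths, angles, half-lengths and shearings have been wiggled by quantities of size $O(\epsilon/R)$ or $O(\epsilon/R^2)$. Since Proposition \ref{quasi1} worked with explicit, strict inequalities (length $\geq R/128$, angle $\geq \pi/18$), these inequalities survive the perturbation once $\epsilon$ is small and $R$ is large.

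First, given $x,y\in\widetilde{W}$ with $d_0(x,y)\geq R/4$, I would let $\gamma$ be the $d_0$-shortest path in $\widetilde{W}$ from $x$ to $y$ and construct the intersection sequence and modified sequence $y_1,\ldots,y_m$ exactly as in Section~\ref{injectivity}, using the same threshold $R/64$. Because $W$ contains all $0$-cells, $1$-cells and seams, and because $d_0$ is the standard-model path metric on $\widetilde{Z}$, the combinatorics of the modification (in particular the observation that at most three consecutive $x_i$'s can collapse to a single modified point, and the dichotomy between ``inside a pants piece'' and ``across a $1$-cell neighborhood'') is unchanged. Thus I still get $d_0(y_j,y_{j+1})\geq R/128$ between consecutive modified points, with the $R/2$ lower bound whenever the segment crosses a surface piece (by Step IV of the construction of $Z$).

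Second, I would redo the angle estimate at each joint $y_j$. The two inputs used in Proposition~\ref{quasi1} were: (a) the hyperbolic-plane formulas \eqref{3}--\eqref{5}, which are stable under $O(\epsilon/R)$ perturbation of the lengths (governed by $\delta_t,\xi_C,\eta_C$); and (b) the fact that in the standard model adjacent $1$-cells meet with a $(\pi-2\theta_0)$-rotation and pants-seams meet their boundary circles perpendicularly. In Construction~\ref{defor}, (b) is perturbed only by $S_t\in SO(3)$ with $\Theta(\vec v,S_t\vec v)<\epsilon/R$ and by the rotation of the pants-seam feet by $1+\pi i+\eta_C$ instead of $1+\pi i$. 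Thus each angle estimate in the standard-model proof picks up an additive error of order $\epsilon/R$, and the bound $\angle y_{j-1}y_j y_{j+1}\geq \pi/18$ in $\mathbb{H}^3$ persists with, say, $\pi/18-O(\epsilon/R)\geq \pi/19$ for small $\epsilon/R$. The estimates for surface pieces (deformed $\bold{hl}$ and $s$) are exactly the kind of half-length/shear estimates already developed in \cite{KM1} and \cite{Su}, so I would cite them here rather than redo the trigonometry.

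Finally, I would conclude with the broken-geodesic inefficiency lemma (Lemma 4.8 of \cite{LM}, the same one used in the proof of Proposition~\ref{quasi1}): a broken geodesic in $\mathbb{H}^3$ with segment lengths $\geq R/128$ and exterior angles $\leq \pi-\pi/19$ at each joint has total $\mathbb{H}^3$-length at least $(1-c(R))$ times its piecewise length, minus a bounded additive error per joint, where $c(R)\to 0$. Summing over the $m$ joints and comparing with $d_0(x,y)=d_0(x,y_1)+\sum d_0(y_j,y_{j+1})+d_0(y_m,y)$ gives
\[
d_{\mathbb{H}^3}(i(x),i(y))\;\geq\;\tfrac{9}{10}\,d_0(x,y)-\tfrac{R}{8},
\]
with the same treatment of the boundary cases $d_0(x,x_1)<R/32$ as before. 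The upper bound $d_{\mathbb{H}^3}(i(x),i(y))\leq(1+O(\epsilon/R))\,d_0(x,y)$ is immediate, since $i$ sends each arc of $\widetilde{W}$ to a geodesic arc in $\mathbb{H}^3$ whose length differs from the $i_0$-length by the parameter perturbation. Together these give the quasi-isometric embedding.

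The main obstacle is keeping track of how the perturbations of the pants parameters $\xi_C,\eta_C$ propagate into angle estimates along arcs that cross several circles of the pants decomposition of a single $S_i$; a clean way to handle this is to reuse, as black boxes, the deformation estimates of \cite{Su}, so that I only need to verify the local angle/length perturbation at each crossing rather than redo the composite trigonometry from scratch. Once those are in hand, the modified-path argument of Proposition~\ref{quasi1} goes through with only cosmetic changes to the numerical constants.
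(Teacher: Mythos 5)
Your high-level plan (modified path, piece-by-piece length and angle control, then the inefficiency lemma of~\cite{LM}) is the same as the paper's, but there is a real gap in how you handle the angle estimate at the modified points, and this gap is exactly where the bulk of the paper's Section~\ref{injectivity} lives.

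The quantity you need to control at each joint $y_j$ is $\angle i(y_{j-1})\,i(y_j)\,i(y_{j+1})$ in $\mathbb{H}^3$, and this is \emph{not} a local perturbation of the standard-model angle: the points $y_{j\pm1}$ are at distance $\gtrsim R/128$ from $y_j$ inside a piece of $\widetilde{Z}$, and the perturbation of the direction of the chord $\overline{i(y_j)i(y_{j\pm1})}$ accumulates all the way along that piece. Your claim that ``each angle estimate ... picks up an additive error of order $\epsilon/R$'' silently asserts that this accumulated deviation is still $O(\epsilon/R)$, which is precisely what one must prove, and what the paper proves as Proposition~\ref{estimation}; in the actual estimates there one only gets errors like $(\epsilon/R)^{1/4}$ and $(\delta/300)^2$, not $O(\epsilon/R)$, and the argument requires a case split on whether the chord crosses the surface piece $\widetilde S$ or only the cornered annuli. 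Relatedly, you cannot simply outsource this to~\cite{Su}: Theorem~\ref{piece}, the deformation result inherited from~\cite{Su}, controls only the surface components $\widetilde{W}\cap p^{-1}(S_i)$, whereas the pieces of $\widetilde{Z}$ along which the modified path travels also include the cornered annuli glued to the $1$-cells, whose deformation behavior requires the new Lemmas~\ref{base1} and~\ref{preestimation}. So what you describe as a black-box citation is actually a new chain of trigonometric estimates (Lemmas~\ref{base1}, \ref{preestimation}, and Proposition~\ref{estimation}) that must be supplied before the modified-path argument of Proposition~\ref{quasi1} can be re-run for $i$. Once Proposition~\ref{estimation} is in hand with $\delta=(\pi/360)^2$, your concluding paragraph does match the paper's short derivation of Theorem~\ref{technical}: the angle at each joint of the $i$-image of the modified path is at least $\pi/18-2\delta>\pi/36$, the segment lengths are at least $R/256$, and Lemma~4.8 of~\cite{LM} gives the lower quasi-isometry bound, with the upper bound immediate from Proposition~\ref{estimation}(1) and equation~\eqref{8}.
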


Theorem \ref{technical} implies Theorem \ref{claim} by the following argument.

Proof of Theorem \ref{claim}: $i:(\widetilde{W},d_0|_{\widetilde{W}})\rightarrow (\mathbb{H}^3,d_{\mathbb{H}^3})$ is a quasi-isometric embedding clearly implies $j_*:\pi_1(Z)\rightarrow \pi_1(M)\subset PSL_2(\mathbb{C})$ is an injective map, when the parameters are properly chosen. We can choose a continuous family of parameters satisfying conditions in Parameter \ref{parameter}, such that the associated family of maps $i_s:\widetilde{W}\rightarrow \mathbb{H}^3$ ($s\in [0,1]$) connects $i_0|_{\widetilde{W}}:\widetilde{W}\rightarrow \mathbb{H}^3$ to $\widetilde{j}|_{\widetilde{W}}:\widetilde{W}\rightarrow \mathbb{H}^3$. So there exists a continuous family of representations $\rho_s:\pi_1(Z)\rightarrow PSL_2(\mathbb{C})$ such that $\rho_s$ lies in $int(AH(\pi_1(Z)))$ for any $s\in[0,1]$, with $\rho_0$ given by the standard model $i_0:\widetilde{Z}\rightarrow \mathbb{H}^3$ and $\rho_1=j_*$. Here $$AH(\pi_1(Z))=\{\rho:\pi_1(Z)\rightarrow PSL_2(\mathbb{C})|\ \rho {\text \ is\ a\ discrete,\ faithful \ representation} \}/\sim.$$
So $\mathbb{H}^3/j_*(\pi_1(Z))$ is homeomorphic with $\mathbb{H}^3/\rho_0(\pi_1(Z))$ with respect to the induced orientations from $\mathbb{H}^3$, which completes the proof of Theorem \ref{claim}.

So it remains to prove Theorem \ref{technical}.

In Theorem 3.8 and Theorem 3.10 of \cite{Su}, we have given estimations for the quasi-isometric constant and angle change on each component of $\widetilde{W}\cap p^{-1}(S_i)$. To state these results, we need the following setting of coordinates.

For any two points $x,y \in \mathbb{H}^3$, we will use $\overline{xy}$ to denote the oriented geodesic arc in $\mathbb{H}^3$ from $x$ to $y$.

Let $V$ be a component of $\widetilde{W}\cap p^{-1}(S_i)$. Take two points $x,y \in V$, such that $d_0(x,y)\geq \frac{R}{4}$, and $x$ lies on some component $p^{-1}(\partial S_i)\cap V$, which is denoted by $\beta$. We will give coordinates for the tangent vectors of $\overline{i_0(x)i_0(y)}$ and $\overline{i(x)i(y)}$ at $i_0(x)$ and $i(x)$ respectively, under some properly chosen frames. More precisely, endow $\beta$ with an arbitrary orientation, let $\alpha\subset V$ be an arc that intersects with $\beta$ and projects to a seam in $S_i$, such that $\alpha$ is the closest such arc from $x$. Let $\vec{e}_1\in T_{i_0(x)}^1(\mathbb{H}^3)$ be the tangent vector of $i_0(\beta)$ at $i_0(x)$, $\vec{e}_2\in T_{i_0(x)}^1(\mathbb{H}^3)$ be the parallel transportation of the tangent vector of $i_0(\alpha)$ to $i_0(x)$ along $i_0(\beta)$, and $\vec{e}_3\in T_{i_0(x)}^1(\mathbb{H}^3)$ such that the orthonormal frame $(\vec{e}_1,\vec{e}_2,\vec{e}_3)$ gives the orientation of $\mathbb{H}^3$. Let $\theta$ be the angle between $\vec{e}_1$ and the tangent vector of $\overline{i_0(x)i_0(y)}$ at $i_0(x)$, and let $\phi$ be the angle between $\vec{e}_3$ and the tangent vector of $\overline{i_0(x)i_0(y)}$ at $i_0(x)$. Then we define $\Theta(i_0(\beta),i_0(\alpha),\overline{i_0(x)i_0(y)})=(\theta,\phi)$. We also define $\theta'$, $\phi'$ and $\Theta(i(\beta),i(\alpha),\overline{i(x)i(y)})=(\theta',\phi')$ by the same way, with $i_0$ replaced by $i$. Note that $\phi=\frac{\pi}{2}$ since $i_0(V)$ lies in a totally geodesic plane in $\mathbb{H}^3$.

Theorem 3.8 and Theorem 3.10 of \cite{Su} give the following statement.

\begin{thm}\label{piece}
For any $0<\delta<1$, there exists constants $\hat{\epsilon}>0$ and $\hat{R}>0$, such that for any positive numbers $\epsilon<\hat{\epsilon}$ and $R>\hat{R}$, the following estimations hold.
\begin{enumerate}
\item $i|_V:(V,d_0|_V) \rightarrow (\mathbb{H}^3,d_{\mathbb{H}^3})$ is an $(1+k\frac{\epsilon}{R}, k(\epsilon+\frac{1}{R})^{\frac{1}{5}})$-quasi-isometric embedding for a universal constant $k$.
\item  $|\Theta(i_0(\beta),i_0(\alpha),\overline{i_0(x)i_0(y)})-\Theta(i(\beta),i(\alpha),\overline{i(x)i(y)})|=|(\theta-\theta',\phi-\phi')|<(\frac{\delta}{300})^2$.
\end{enumerate}
\end{thm}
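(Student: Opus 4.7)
My plan is to reduce the statement piece-by-piece to the two estimates that the author has already proved in \cite{Su}, after verifying that the setup on a component $V$ matches theirs. First I would unpack the geometric setup: a component $V$ of $\widetilde{W}\cap p^{-1}(S_i)$ is the portion of the universal cover of $S_i$ consisting of cuffs and seams of the pants decomposition of $S_i$. Under $i_0$ the component $V$ lands in a totally geodesic plane in $\mathbb{H}^3$, with cuffs of length exactly $r_iR$ and shearing parameter exactly $1$. Under $i$, the pants are pasted with perturbed half-lengths $\frac{r_iR}{2}+\xi_C$ and perturbed shearings $1+\eta_C$ with $|\xi_C|<\epsilon/(r_iR)$ and $|\eta_C|<\epsilon/(r_iR)^2$. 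This is precisely the $(R,\epsilon)$-almost totally geodesic setup of \cite{KM1} and \cite{LM}, and is the same setup for which Theorems 3.8 and 3.10 of \cite{Su} are formulated.

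For part (1), I would run the following argument along a staircase path in $V$ from $x$ to $y$ of the Kahn--Markovic type: it alternately travels along pants cuffs and seams, crossing $O(d_0(x,y)/R)$ pants. In each pants, the perturbation of $\xi_C$ contributes a length error of size $O(\epsilon/R)$ and the perturbation of $\eta_C$ contributes a twist of size $O(\epsilon/R^2)+O(e^{-r_iR/4})$. Summing multiplicatively over the pants produces a $(1+k\epsilon/R)$ factor, while boundary effects near $x,y$ contribute an additive error whose optimal scaling is $O((\epsilon+1/R)^{1/5})$. This is the content of Theorem 3.8 of \cite{Su}, which I would quote directly after checking that the present normalization of $\xi_C,\eta_C$ matches theirs (modulo the substitution $R\mapsto r_iR$, which enters only through the constant $k$).

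For part (2), I would track the tangent vector of $\overline{i(x)i(y)}$ at $i(x)$ relative to the orthonormal frame $(\vec{e}_1,\vec{e}_2,\vec{e}_3)$. Using the hyperbolic plane flattening of adjacent pants, the direction of the staircase path from $i_0(x)$ differs from the direction of the corresponding staircase from $i(x)$ by an amount at most a constant times $\epsilon/R^2$ per pants traversed, times an exponentially decaying hyperbolic damping factor $e^{-r_iR/4}$ from the length spent in each pants before the next perturbation. For $d_0(x,y)\geq R/4$, the leading contribution from the far endpoint $y$ is exponentially suppressed, and only the first few pants near $x$ contribute essentially, giving the bound $(\delta/300)^2$ once $\hat{\epsilon}$ is chosen small and $\hat{R}$ large enough in terms of $\delta$. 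This is exactly Theorem 3.10 of \cite{Su}, which I would again quote.

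\textbf{Main obstacle.} The hard work is already done in \cite{Su}; the task here is bookkeeping. The subtle point is the exponent $\tfrac{1}{5}$ in part (1), which is not canonical but arises from balancing the linearly accumulated additive error $O(\epsilon/R)$ per pants against the hyperbolic contraction rate $e^{-r_iR/4}$, optimized over the depth to which the staircase path dips into $V$ before returning near its endpoints. I expect the only genuinely new verification to be that the two ``boundary'' terms near $x$ and near $y$, where here $V$ is attached to the $1$-skeleton $\widetilde{Z}^{(1)}$ rather than to other pants, admit the same endpoint estimate as in \cite{Su}; this follows because Construction \ref{defor} enforces perpendicularity of seams to the corresponding cuffs and the same $(1+\pi i+\eta_{C_i})$ shearing relation at the boundary that was used internally, so no new geometric input is required.
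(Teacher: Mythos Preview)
Your proposal is correct and matches the paper's approach: the paper does not give an independent proof of this theorem at all, but simply records it as a restatement of Theorems~3.8 and~3.10 of \cite{Su}, after setting up the coordinate notation $\Theta(\cdot,\cdot,\cdot)$. Your additional bookkeeping (checking that the perturbation parameters $\xi_C,\eta_C$ on $V$ agree with those in \cite{Su} up to the substitution $R\mapsto r_iR$, and that the boundary shearing at $\partial S_i$ has the same form) is more than the paper explicitly does, but it is exactly the verification implicit in the paper's citation.
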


The estimation we need should give quasi-isometric constant and angle change on pieces of $\widetilde{Z}$. Each piece of $\widetilde{Z}$ is the union of some component of $p^{-1}(S_i)$ and components of the preimage of cornered annuli. To extend the estimation in Theorem \ref{piece} to pieces of $\widetilde{Z}$, we first need to give some estimation on the preimage of cornered annuli.

The following estimation is intuitive and elementary, so we leave it as an exercise for the readers.

\begin{lem}\label{base1}
Let $x_1,x_2,y_1,y_2$ be four points in $\mathbb{H}^2$, and let $Q$ be the union of the four geodesic arcs $\overline{x_1x_2},\overline{y_1y_2},\overline{x_1y_1},\overline{x_2y_2}$ as in Figure 7 a). The geometry of these four points satisfies: $\overline{y_1y_2}$ is perpendicular with both $\overline{x_1y_1}$ and $\overline{x_2y_2}$, $\angle{y_1x_1x_2}=\angle{y_2x_2x_1}=\theta_0$ and $d(x_1,x_2)=L$.

Let $x_1',x_2',y_1',y_2'$ be another four points in $\mathbb{H}^3$, and let $Q'$ be the union of the four geodesic arcs $\overline{x_1'x_2'},\overline{y_1'y_2'},\overline{x_1'y_1'},\overline{x_2'y_2'}$ as in Figure 7 b). The geometry of these four points satisfies: $\overline{y_1'y_2'}$ is perpendicular with both $\overline{x_1'y_1'}$ and $\overline{x_2'y_2'}$, $|d(x_1,y_1)-d(x_1',y_1')|, |d(x_2,y_2)-d(x_2',y_2')|< \frac{10\epsilon}{R}$ and $|d(x_1',x_2')-L|<\frac{10\epsilon}{R}$. Moreover, the angle between the tangent vector of $\overline{y_1'x_1'}$ (at $y_1'$) and the parallel transportation of the tangent vector of $\overline{y_2'x_2'}$ (at $y_2'$) to $y_1'$ along $\overline{y_2'y_1'}$ is less than $\frac{10\epsilon}{R}$.

\begin{center}
\psfrag{a}[]{$x_1$} \psfrag{b}[]{$x_2$} \psfrag{c}[]{$y_1$}
\psfrag{d}[]{$y_2$} \psfrag{e}[]{$x$} \psfrag{f}[]{$p(x)$} \psfrag{g}[]{a)}
\psfrag{h}[]{$x_1'$} \psfrag{i}[]{$x_2'$} \psfrag{j}[]{$y_1'$} \psfrag{k}[]{$y_2'$} \psfrag{l}[]{b)}
\includegraphics[width=5in]{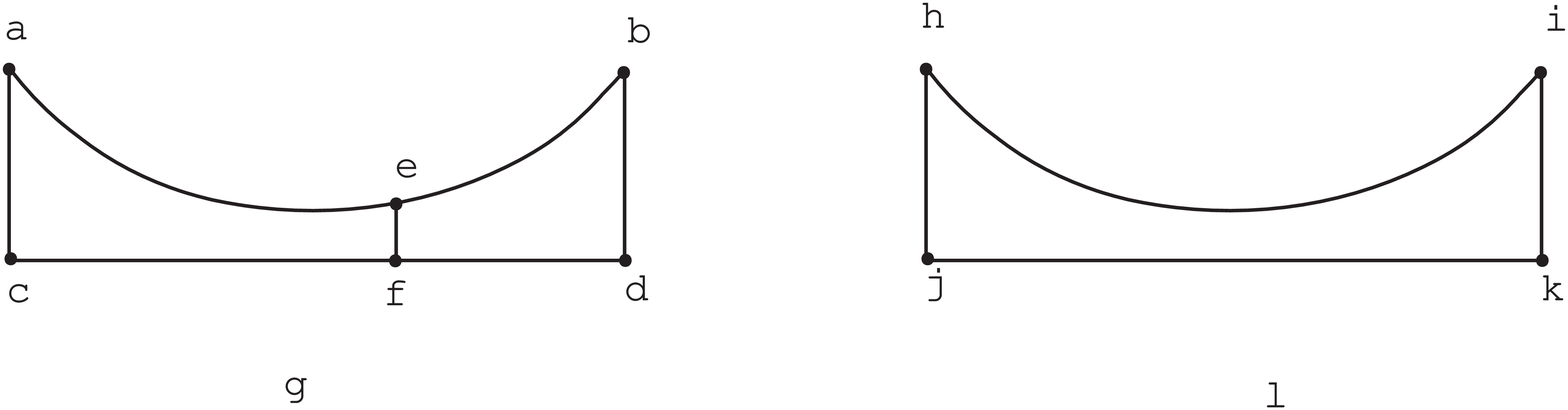}
 \centerline{Figure 7}
\end{center}

Let $p: \overline{x_1x_2}\rightarrow \overline{y_1y_2}$ and $p': \overline{x_1'x_2'}\rightarrow \overline{y_1'y_2'}$ be the nearest point projection, and $r:Q\rightarrow Q'$ be the piecewise linear homeomorphism sending $x_1,x_2,y_1,y_2$ to $x_1',x_2',y_1',y_2'$ respectively.

Then for any $x\in \overline{x_1x_2}$, the following estimations hold.
\begin{itemize}
\item $|d(p(x),x)-d(p'(r(x)),r(x))|<80\sqrt{\frac{\epsilon}{R}}$.
\item $d(r(p(x)),p'(r(x)))<80\sqrt{\frac{\epsilon}{R}}$.
\item $|\angle{x_1xp(x)}-\angle{x_1'r(x)p'(r(x))}|<80\sqrt{\frac{\epsilon}{R}}$.
\end{itemize}
\end{lem}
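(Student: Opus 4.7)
The strategy is to first replace $Q'$ by a nearby coplanar quadrilateral, using the near-parallel hypothesis on the tangent vectors at $y_1',y_2'$, and then carry out the comparison entirely in $\mathbb{H}^2$ by hyperbolic trigonometry. Throughout, note that $d(x_i,y_i)$ is bounded by a universal constant depending only on $\theta_0$ (by dropping a perpendicular from $x_i$ to $\overline{y_1y_2}$, each side $\overline{x_iy_i}$ is the hypotenuse of a Lambert quadrilateral with one angle $\theta_0$, whose non-$\theta_0$ sides remain bounded as $L\to\infty$); hence $\sinh d(x_i,y_i)$ and $\cosh d(x_i,y_i)$ and their perturbed analogues in $Q'$ are all absolute constants.

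\emph{Step 1 (planarization).} Let $\Pi\subset\mathbb{H}^3$ be the totally geodesic plane containing $\overline{y_1'y_2'}$ and $\overline{y_1'x_1'}$. Parallel transport of the tangent vector of $\overline{y_1'x_1'}$ at $y_1'$ along $\overline{y_1'y_2'}$ to $y_2'$ lies in $\Pi$ and is perpendicular to $\overline{y_1'y_2'}$; by hypothesis it differs from the tangent vector of $\overline{y_2'x_2'}$ at $y_2'$ by an angle at most $\tfrac{10\epsilon}{R}$. A rotation about the axis $\overline{y_1'y_2'}$ by this small angle carries $x_2'$ to a point $x_2''\in\Pi$ with
\begin{equation*}
d(x_2',x_2'')\leq \sinh(d(y_2',x_2'))\cdot\tfrac{10\epsilon}{R}=O(\epsilon/R).
\end{equation*}
Thus the planar quadrilateral $Q'':=\overline{x_1'x_2''}\cup\overline{x_2''y_2'}\cup\overline{y_2'y_1'}\cup\overline{y_1'x_1'}\subset\Pi$ agrees with $Q'$ pointwise on corresponding edges to within $O(\epsilon/R)$, and the top-side length $d(x_1',x_2'')$ still differs from $L$ by $O(\epsilon/R)$.

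\emph{Step 2 (planar comparison).} In $\mathbb{H}^2$, a quadrilateral with right angles at two adjacent vertices is determined up to congruence by the three side lengths $(d(x_1,y_1),\,d(x_2,y_2),\,L)$; the fourth side and every angle are smooth functions of these three parameters via the standard Lambert/Saccheri formulas (cut along the perpendicular from each $x_i$ to the line $\overline{y_1y_2}$). The three parameters for $Q''$ differ from those for $Q$ by $O(\epsilon/R)$, and the partial derivatives of the derived quantities -- $d(y_1,y_2)$, the foot of perpendicular $p(x)$ as a function of the position of $x\in\overline{x_1x_2}$, the distance $d(x,p(x))$, and the angle $\angle x_1xp(x)$ -- are bounded by a universal constant because all $\sinh,\cosh$ factors appearing at side lengths $d(x_i,y_i)$ are bounded and the $L$-dependence enters only through the edge along which we parametrize. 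Therefore every derived quantity of $Q''$ differs from its analogue in $Q$ by $O(\epsilon/R)$.

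\emph{Step 3 (conclusion).} The map $r$ takes the point at normalized arc-length parameter $t\in[0,1]$ on an edge of $Q$ to the point at parameter $t$ on the corresponding edge of $Q'$; composed with the planarization of Step 1 it differs from the analogous map $Q\to Q''$ by $O(\epsilon/R)$ pointwise. Combining this with Step 2, all three target quantities $d(p(x),x)$ vs.\ $d(p'(r(x)),r(x))$, $d(r(p(x)),p'(r(x)))$, and $\angle x_1xp(x)$ vs.\ $\angle x_1'r(x)p'(r(x))$ differ by $O(\epsilon/R)$, easily absorbed by the generous bound $80\sqrt{\epsilon/R}$ stated. The only real work is bookkeeping of hyperbolic-trig identities and verifying that their Jacobians are uniformly bounded in the parameter range of interest, which is why the author leaves the verification as an exercise; I do not expect any conceptual obstacle.
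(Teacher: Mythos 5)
The paper explicitly declines to prove this lemma (``intuitive and elementary, so we leave it as an exercise for the readers''), so there is no proof in the text to compare against. Your strategy---planarize $Q'$ using the near-parallel-transport hypothesis and then compare two coplanar quadrilaterals by hyperbolic trigonometry---is the natural one and is very likely what the author had in mind. Step~1 is correct as written: rotating $\overline{x_2'y_2'}$ about the axis $\overline{y_1'y_2'}$ into $\Pi$ moves $x_2'$ by at most $\frac{10\epsilon}{R}\sinh d(x_2',y_2')=O(\epsilon/R)$ (with $\sinh d(x_2',y_2')$ bounded), and since the rotation fixes $\overline{y_1'y_2'}$ pointwise the nearest-point projection to that geodesic is unaffected, so the problem does reduce to $\Pi\cong\mathbb{H}^2$.

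Step~2 is where the content sits, and as you acknowledge it is a heuristic (``verify the Jacobians are uniformly bounded''). That assertion is true, but a word of caution is warranted: $R$ is large, so some of the compared quantities---in particular $d(p(x),x)$, which is exponentially small near the waist of $Q$---live in a regime where a careless comparison (e.g.\ comparing $\cosh$ of two distances and inverting) would lose a square root, which is plausibly why the author hedges with $80\sqrt{\epsilon/R}$ rather than $O(\epsilon/R)$. The clean way to avoid the loss is to normalize so the geodesic lines through $\overline{y_1y_2}$ and $\overline{y_1'y_2'}$ coincide with $y_1=y_1'$ and matching orientation; from $\cosh L=\cosh a_1\cosh a_2\cosh b-\sinh a_1\sinh a_2$ and its primed analogue one gets $|b-b'|=O(\epsilon/R)$, hence $d(x_i,x_i')=O(\epsilon/R)$; convexity of the distance function along the two constant-speed geodesics $t\mapsto g(tL)$, $t\mapsto g'(tL')$ then gives $d(g(tL),g'(tL'))=O(\epsilon/R)$ uniformly in $t$, and the three estimates follow from the $1$-Lipschitz property of $q\mapsto d(q,\ell)$ and of nearest-point projection, together with the bound $|\dot\psi|\le|\tanh v|\le 1$ on the rate of change of the angle to the perpendicular along a geodesic. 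This makes your $O(\epsilon/R)$ claim rigorous, and it comfortably absorbs into $80\sqrt{\epsilon/R}$. One small wording issue in your preliminary remark: $\overline{x_iy_i}$ \emph{is} the perpendicular from $x_i$ to $\overline{y_1y_2}$, not the hypotenuse of some Lambert quadrilateral; the relevant fact is simply that $d(x_i,y_i)$ is determined by $\theta_0$, $L$ and the right angles and tends to $\cosh^{-1}(\csc\theta_0)$ as $L\to\infty$, hence is bounded.
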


For three points $p,q,r\in\mathbb{H}^3$ not lying on a geodesic, we use $P_{pqr}$ to denote the hyperbolic plane containing $p,q$ and $r$. For another such hyperbolic plane $P_{p'q'r'}$ intersecting with $P_{pqr}$, we use $\Theta(P_{pqr},P_{p'q'r'})\in[0,\frac{\pi}{2}]$ to denote the angle between these two planes.

In $\widetilde{Z}$, each component of the preimage of a cornered annulus is subdivided into union of $4$-gons by $\widetilde{W}$. Let $R$ be the intersection of such a $4$-gon with $\widetilde{W}$, then $i_0(R)\subset \mathbb{H}^2\subset \mathbb{H}^3$ has exactly the same geometry as $Q$ in Lemma \ref{base1}.

To apply Lemma \ref{base1} to compare the geometry between $i_0(R)$ and $i(R)$, we need the following lemma, which shows that $i(R)\subset \mathbb{H}^3$ satisfies the conditions of $Q'$ in Lemma \ref{base1}, and also gives some further estimations. The proof follows from Construction \ref{defor} and elementary hyperbolic geometry.

\begin{lem}\label{preestimation}
Let $x_1,x_2,y_1,y_2$ be the four vertices of $R\subset \widetilde{W}$, such that the position of these four point are as shown in Figure 7 a), then the following estimation holds.
\begin{itemize}
\item $|d(i(x_1),i(x_2))-L|<\frac{\epsilon}{R}$.
\item $|d(i(x_1),i(y_1))-\cosh^{-1}{(\csc{\theta_0})}|, |d(i(x_2),i(y_2))-\cosh^{-1}{(\csc{\theta_0})}|<\frac{9\epsilon}{R}$.
\item The angle between the tangent vector of $\overline{i(y_1)i(x_1)}$ (at $i(y_1)$) and the parallel transportation of the tangent vector of $\overline{i(y_2)i(x_2)}$ (at $i(y_2)$) to $i(y_1)$ along $\overline{i(y_2)i(y_1)}$ is smaller than $\frac{10\epsilon}{R}$.
\item $|d(i(y_1),i(y_2))-R|<\frac{30\epsilon}{R}$.
\item For any $x\in \overline{i(x_1)i(x_2)}$, let $x'$ be the nearest point projection of $i(x)$ on $\overline{i(y_1)i(y_2)}$, then $\Theta(P_{i(x_1)i(x_2)x'},P_{i(y_1)i(y_2)i(x)}),\Theta(P_{i(x_1)i(y_1)i(y_2)},P_{i(x)i(y_1)i(y_2)})<\frac{10\epsilon}{R}$ holds.
\end{itemize}
\end{lem}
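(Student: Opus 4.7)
The plan is to verify each of the five bullets in turn by tracking how the small perturbations dictated by Parameter~\ref{parameter} propagate through the rigid geometry of the standard model $i_0$. The first bullet is immediate from Construction~\ref{defor}: the $1$-cell $t$ corresponding to $\overline{x_1x_2}$ is mapped to a geodesic arc of length $L+\delta_t$, and $|\delta_t|<\epsilon/R$, so $|d(i(x_1),i(x_2))-L|<\epsilon/R$.

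For bullets 2 and 4, the key observation is that in the standard model $i_0$, the $4$-gon $R$ is mapped to a planar Saccheri-type quadrilateral in $\mathbb{H}^2\subset\mathbb{H}^3$, with summit $\overline{i_0(x_1)i_0(x_2)}$ of length $L$, right angles at $y_1$ and $y_2$, summit angles $\theta_0$ at $x_1$ and $x_2$, legs of length exactly $h_0:=\cosh^{-1}(\csc\theta_0)$, and base length exactly $R$; the formula $L=\cosh^{-1}(\tfrac{\cosh R+\cos^2\theta_0}{\sin^2\theta_0})$ is chosen precisely so that these constraints are mutually consistent (equivalently, $h_0$ is the angle-of-parallelism distance for $\theta_0$). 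For the deformed map $i$, Construction~\ref{defor} still forces each leg $i(\overline{x_jy_j})$ to meet the bi-infinite geodesic $i(\beta)$ perpendicularly; the summit length is perturbed by at most $\epsilon/R$ via $\delta_t$; and the summit angles are perturbed by at most a bounded multiple of $\epsilon/R$ because $\vec{u}_{j,t}$ differs from its rigid value by $<\epsilon/R$ and the frame rotation $S_t$ is $\epsilon/R$-close to the identity. The Saccheri--Lambert identities express the leg and base lengths as smooth functions of these inputs with derivatives uniformly bounded near the standard configuration (once $R$ is large), so a perturbation of size $\epsilon/R$ in the inputs produces a perturbation of the same order in the outputs; absorbing the multiplicative constants yields the bounds $9\epsilon/R$ and $30\epsilon/R$.

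Bullets 3 and 5 follow from the same principle applied to directions and planes rather than lengths. In the standard model both legs lie in the common hyperbolic plane of the $4$-gon, so the tangent vector at $y_1$ of $\overline{y_1x_1}$ equals the parallel transport along $\overline{y_2y_1}$ of the tangent at $y_2$ of $\overline{y_2x_2}$, and the two planes $P_{i_0(x_1)i_0(x_2)x'}$ and $P_{i_0(y_1)i_0(y_2)i_0(x)}$ coincide with this plane of the $4$-gon. Under deformation, the upper half of the $4$-gon tilts out of this plane by an angle of order $\epsilon/R$ induced by the perturbations in $\vec{u}_{1,t},\vec{u}_{2,t},S_t$, while the boundary-circle shearing $\eta_{C_i}$ of size $<\epsilon/R^2$ produces a strictly smaller rotation about $i(\beta)$; summing these contributions gives the bound $10\epsilon/R$ in both bullets.

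The main technical point to verify, rather than any conceptual obstacle, is that the first-order perturbations of the Saccheri--Lambert identities, and the angle changes produced by perturbing the perpendicularity and parallel-transport data, are uniformly controlled in $R$ and do not amplify the input errors beyond the stated constants. This is routine because all relevant derivatives are bounded away from degenerate configurations once $R$ is large and $\epsilon$ is small, and analogous (and more delicate) estimates have already been carried out in the author's earlier work \cite{Su} which underlies Theorem~\ref{piece}.
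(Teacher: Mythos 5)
The paper deliberately gives no proof of this lemma, stating only that it ``follows from Construction \ref{defor} and elementary hyperbolic geometry,'' so your task was to reconstruct the implicit argument, and your reconstruction is essentially the intended one: read off the first bullet directly from the $L+\delta_t$ edge length in Construction \ref{defor}, then treat the $4$-gon as a (nearly) Saccheri quadrilateral and propagate the $O(\epsilon/R)$ input perturbations (in $\delta_t$, $\vec u_{1,t}$, $\vec u_{2,t}$, $S_t$, $\eta_{C_i}$) through the Lambert--Saccheri relations and the angle-of-parallelism picture. One small correction: in the standard model $i_0$ the legs do \emph{not} have length exactly $h_0=\cosh^{-1}(\csc\theta_0)$. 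The exact constraints (summit of length $L$ with $\cosh L=(\cosh R+\cos^2\theta_0)/\sin^2\theta_0$, summit angles $\theta_0$, right angles at the base) force the base to have length exactly $R$, but the Lambert relations $\cosh h\,\sin\theta_0=\cosh t$ and $\cos\theta_0=\sinh t\,\sinh(R/2)$ give $h=h_0+O(e^{-R})$, so $h_0$ is only the $R\to\infty$ limit; likewise your parenthetical ``(equivalently, $h_0$ is the angle-of-parallelism distance for $\theta_0$)'' conflates the limiting leg length with the defining relation for $L$. This does not damage the proof, since Remarks \ref{log1}--\ref{log2} guarantee $e^{-R}\ll\epsilon/R$ and the exponential correction is absorbed into the stated $9\epsilon/R$ budget, but it is worth stating precisely. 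I would also flag that ``derivatives are bounded away from degenerate configurations'' should be phrased more carefully: the Lambert quadrilateral itself degenerates as $R\to\infty$ (both $L/2$ and $R/2$ blow up), and what is actually uniformly bounded are the derivatives of the quantities of interest (leg length, base length minus $R$, and the twist angles) with respect to the perturbed inputs; spelling this out is what makes the constants $9$, $10$, $30$ credible. With those clarifications your argument matches the paper's intent.
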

\hfill $\qed$

\begin{rem}\label{base2}
Note that since $\widetilde{W}$ may contains other $1$-cells that intersect with $R$, the definition of $i|_R:R\rightarrow i(R)$ does not exactly coincide with $r:Q\rightarrow Q'$. However, since there are at most two such $1$-cells intersecting with $R$, and they only intersect with $\overline{y_1y_2}$, these two maps are only different on $\overline{y_1y_2}$ by an error of at most $100\frac{\epsilon}{R}$. So the estimations in Lemma \ref{base1} still hold in this case, with $80\sqrt{\frac{\epsilon}{R}}$ replaced by $100\sqrt{\frac{\epsilon}{R}}$.
\end{rem}

Take two points $x,y\in \widetilde{Z}^{(1)}$ lying in the same piece of $\widetilde{Z}$, such that $d_0(x,y)\geq \frac{R}{128}$. Let $U$ be the piece of $\widetilde{Z}$ that contains $x$ and $y$, and let $\gamma$ be the shortest path from $x$ to $y$ in $(\widetilde{Z},d_0)$, then $\gamma$ lies in $U$.

Now we need to give estimations for the length and angle change for $\overline{i(x)i(y)}$ under certain coordinate. To formulate the estimation, we need to give the following setting of coordinate of angles, which is similar with the formulation of Theorem \ref{piece}.

Let $t$ be the $1$-cell in $\widetilde{Z}$ which contains $x$. If $x$ lies in $\widetilde{Z}^{(0)}$, choose any such $t$ which lies in $U$, and there are two choices. We give an orientation on $t$ such that $x$ is closer with the initial point than the terminal point of $t$. Let the other oriented $1$-cell which lies in $U$ and share the initial point with $t$ be $t'$, and denote the intersection point of $t$ and $t'$ by $z$.

At the point $i_0(x)\in \mathbb{H}^3$, let $\vec{e}_1\in T_{i_0(x)}^1(\mathbb{H}^3)$ be the tangent vector of $i_0(t)$ at $i_0(x)$. We use $P_{i_0(t)i_0(t')}$ to denote the hyperbolic plane containing $i_0(t)$ and $i_0(t')$, and let $\vec{e}_3$ be the unit normal vector of $P_{i_0(t)i_0(t')}$ at $i_0(x)$. Then we have an orthonormal frame $(\vec{e}_1,\vec{e}_2,\vec{e}_3)$ at $i_0(x)$, with $\vec{e}_2=\vec{e}_3\times \vec{e}_1$.

Let $\vec{v}\in T_{i_0(x)}^1(\mathbb{H}^3)$ be the tangent vector of $\overline{i_0(x)i_0(y)}$ at $i_0(x)$, $\theta$ be the angle between $\vec{v}$ and $\vec{e}_1$, and $\phi$ be the angle between $\vec{v}$ and $\vec{e}_3$. Then we define $\Theta(i_0(t),i_0(t'),\overline{i_0(x)i_0(y)})=(\theta,\phi)$, and note that $\phi=\frac{\pi}{2}$ here.

$\Theta(i(t),i(t'),\overline{i(x)i(y)})=(\theta',\phi')$ is defined by the same way, with $i_0$ replaced by $i$ in the definition. We also have an orthonormal frame $(\vec{e}_1',\vec{e}_2',\vec{e}_3')$ at $i(x)$ (as frame $(\vec{e}_1,\vec{e}_2,\vec{e}_3)$ at $i_0(x)$ ). Then we have the following estimation.

\begin{prop}\label{estimation}
For any $0<\delta<1$, there exists constants $\hat{\epsilon}>0$ and $\hat{R}>0$, such that for any positive numbers $\epsilon<\hat{\epsilon}$ and $R>\hat{R}$, the following statement holds. For two points $x,y \in \widetilde{Z}^{(1)}$ as above, with corresponding $1$-cells $t$ and $t'$, the following estimations hold.
\begin{itemize}
\item $\frac{1}{2}d(i_0(x),i_0(y))<d(i(x),i(y))<2d(i_0(x),i_0(y))$.
\item $|\Theta(i_0(t),i_0(t'),\overline{i_0(x)i_0(y)})-\Theta(i(t),i(t'),\overline{i(x)i(y)})|=|(\theta-\theta',\frac{\pi}{2}-\phi')|<\delta$.
\end{itemize}
\end{prop}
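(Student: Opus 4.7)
The plan is to compare $\overline{i(x)i(y)}$ with $\overline{i_0(x)i_0(y)}$ via a common piecewise-geodesic skeleton coming from a shortest path $\gamma$ in $(\widetilde{W}, d_0|_{\widetilde{W}})$. First I decompose $\gamma$ into a concatenation $\gamma_1 \cdots \gamma_N$ with break points $z_0=x, z_1, \ldots, z_N=y$, where each $\gamma_j$ lies entirely either (a) inside a single $4$-gon of a cornered-annulus component of $\widetilde{W}\cap U$, or (b) inside a single connected component of $\widetilde{W}\cap p^{-1}(S_{i_j})$. Transition points $z_j$ occur on pants-decomposition circles (including boundary circles of some $S_i$), on internal subdividing arcs of cornered annuli, or where $\gamma$ meets $\widetilde{Z}^{(1)}$. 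Since $\gamma$ is intrinsically shortest and each sub-arc has length bounded below by the local injectivity radius of $\widetilde{W}$, I expect $N\lesssim 1 + d_0(x,y)/R$.

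Next, on each sub-arc I apply Theorem \ref{piece} (for type (b)), respectively Lemmas \ref{base1}, \ref{preestimation} together with Remark \ref{base2} (for type (a)), to obtain
\[
\bigl|d_{\mathbb{H}^3}(i(z_{j-1}), i(z_j)) - d_{\mathbb{H}^3}(i_0(z_{j-1}), i_0(z_j))\bigr| < k\bigl(\epsilon + \tfrac{1}{R}\bigr)^{1/5},
\]
together with an agreement up to $(\delta/300)^2$ between the endpoint tangent vectors of $\overline{i(z_{j-1})i(z_j)}$ and $\overline{i_0(z_{j-1})i_0(z_j)}$ in the adapted frames used in Theorem \ref{piece}(2). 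Chaining these across all junctions, the broken geodesic $i(\gamma)\subset \mathbb{H}^3$ has the same combinatorial structure as $i_0(\gamma)$, with per-junction turning angles perturbed by $O(\sqrt{\epsilon/R})$ and total length within a $(1 + O((\epsilon+1/R)^{1/5}))$-factor of the length of $i_0(\gamma)$.

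With this setup in place, I invoke the hyperbolic inefficiency bound $I(\theta) = 2\log\sec(\theta/2)$ (cf. Section 4.1 of \cite{KM2} and Lemma 4.8 of \cite{LM}) at each junction. Arguing as in Proposition \ref{quasi1}, the standard-model junction angles of $i_0(\gamma)$ are bounded away from $0$ by a universal constant, and this bound survives the $O(\sqrt{\epsilon/R})$ perturbation. Summing per-junction inefficiencies gives $d_{\mathbb{H}^3}(i_0(x), i_0(y)) \geq \tfrac{9}{10} d_0(x,y) - R/8$, as in Proposition \ref{quasi1}, and the perturbation estimates propagate to $d_{\mathbb{H}^3}(i(x), i(y))$, placing it comfortably within the allowed factor of $2$ of $d_{\mathbb{H}^3}(i_0(x), i_0(y))$. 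For the angular bound, because $d_{\mathbb{H}^3}(i(x), i(y)) \geq R/200$ is large, exponential convergence of hyperbolic geodesics ensures that the initial tangent of $\overline{i(x)i(y)}$ at $i(x)$ is within $e^{-R/200}$ of the initial tangent of $\overline{i(x)i(z_1)}$, and likewise for $i_0$; combined with the per-sub-arc frame-adapted angle agreement from the previous paragraph, this yields the claimed $\Theta$-closeness below $\delta$.

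The main obstacle is to verify that the standard-model per-junction angles in $i_0(\gamma)$ really are uniformly bounded away from $0$ and $\pi$, for every type of junction that can arise. Most are already handled in the proof of Proposition \ref{quasi1}; the residual case --- a crossing of a pants curve or a component of $\partial S_i$ between two adjacent totally geodesic pieces meeting at a specific dihedral angle along a common bi-infinite geodesic --- reduces to a spherical-trigonometry computation at the crossing point in $\mathbb{H}^3$, using the perpendicularity of seams and corner-arcs to circles recorded in Construction \ref{standard}. This calculation is technical but introduces no genuinely new ideas beyond those already developed in the proof of Proposition \ref{quasi1} and in \cite{Su}.
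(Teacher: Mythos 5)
Your proposal attacks the wrong target: it is essentially a proof sketch for Theorem~\ref{technical} (the global quasi-isometry statement), not for Proposition~\ref{estimation}. Proposition~\ref{estimation} is a \emph{single-piece} estimate: $x,y\in\widetilde{Z}^{(1)}$ are assumed to lie in the same piece $U$ of $\widetilde{Z}$, the shortest path $\gamma$ (taken in $(\widetilde{Z},d_0)$, not in the $1$-complex $\widetilde{W}$) already lies in $U$, and there is exactly one surface component $\widetilde S$ in play, not a family $S_{i_j}$. Your decomposition into $N\lesssim 1+d_0(x,y)/R$ sub-arcs with per-junction inefficiency bounds, and the appeal to the $d_{\mathbb{H}^3}(i_0(x),i_0(y))\geq\tfrac{9}{10}d_0(x,y)-R/8$ inequality from Proposition~\ref{quasi1}, belong to the chaining argument that builds Theorem~\ref{technical} \emph{out of} Proposition~\ref{estimation}. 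Using it to prove the proposition itself is circular in spirit.

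The hard content of Proposition~\ref{estimation} is precisely what your last paragraph defers. The angle $\Theta(i_0(t),i_0(t'),\overline{i_0(x)i_0(y)})$ is measured against a frame built from the two incident $1$-cells $t,t'$ at $x$, whereas Theorem~\ref{piece} controls angles measured against a frame built from a pants-decomposition circle $\beta$ and a seam $\alpha$ deep inside $\widetilde S$. Transferring from one frame to the other across the cornered annulus is the whole game, and the paper does this by passing through the projected points $\bar{x},\bar{y}$ where $\gamma$ first and last meets $\partial\widetilde S$, taking the midpoint $m$ of $\overline{i(\bar{x})i(\bar{y})}$, and proving the three-part claim that $d(m,i(x))$ is comparable to $d_0(x,\bar{x})+\tfrac12 d_0(\bar{x},\bar{y})$, that $\angle i(x)mi(\bar{x})<\delta/10$, and that the $(\theta,\phi)$-data changes by at most $\delta/5$ between the $t,t'$-frame and the $\beta,\alpha$-frame. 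This requires a split into the subcases $d_0(x,\bar{x})<\delta/1000$ and $d_0(x,\bar{x})\geq\delta/1000$, because when $x$ is very close to $\bar{x}$ the angle $\angle i(w)i(x)i(\bar{x})$ can be badly perturbed even though the final estimate still holds, and when $x$ is far from $\bar{x}$ one must instead use the nearest-point-projection bounds from Lemmas~\ref{base1}, \ref{preestimation} and a spherical-law-of-cosines computation (equations~\eqref{26}--\eqref{33}). Also, Case~I (the path never enters $\widetilde S$) needs a separate argument via the auxiliary piecewise-geodesic $\omega\subset\widetilde W$, the projections $x',y',x'',y''$ onto $i_0(\beta),i(\beta)$, and the dihedral-plane estimates \eqref{12}--\eqref{14}. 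None of this is recoverable from "chain the per-junction estimates and invoke inefficiency." So there is a genuine gap: you have identified the obstacle but not resolved it, and the mechanism you propose operates at the wrong scale to resolve it.

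One smaller issue worth flagging: you take $\gamma$ to be shortest in $(\widetilde{W},d_0|_{\widetilde{W}})$ and assert $N\lesssim 1+d_0(x,y)/R$ using "the local injectivity radius of $\widetilde{W}$." But $\widetilde W$ contains arcs that cross a given pants circle at points arbitrarily close to each other (adjacent seams from opposite sides, or a seam and a boundary-incidence arc), so sub-arcs can be arbitrarily short. Controlling the count requires something like the modified-sequence device of Proposition~\ref{quasi1}; the raw bound you state is false without it.
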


\begin{proof}
Let $\gamma$ be the shortest path in $(\widetilde{Z},d_0)$ from $x$ to $y$, then $\gamma$ lies in a piece of $\widetilde{Z}$ by assumption, which is denoted by $U$. Let $\widetilde{S}$ be the component of the preimage of $S_i$ which is contained in $U$, and let $\beta$ be the component of $\partial \widetilde{S}$ that is the closest one to $x$.

The estimations clearly holds if $\gamma\subset \widetilde{Z}^{(1)}$. So there are two cases to consider, either $\gamma$ does not intersect with $\widetilde{S}$ (but does not lie in $\tilde{Z}^{(1)}$), or $\gamma$ intersects with $\widetilde{S}$.

In the following, we suppose that $\epsilon>0$ is so small and $R>0$ is so large such that $\frac{\hat{\epsilon}}{\hat{R}}<\frac{\delta^{10}}{10^{40}}$ holds.

{\bf Case I.} $\gamma$ does not intersect with $\widetilde{S}$, then the picture near $\gamma$ looks like Figure 8, with $d_0(x,y)>\frac{R}{128}$.

Let $\omega$ be the concatenation of geodesic arcs in $U$ from $x$ to $y$ as in Figure 8 (part of $\omega$ is drawn by dashed lines). For each geodesic arc in $\omega$, the length of its image under $i_0$ and under $i$ differ by at most $\frac{\epsilon}{R}$, the angle between the image of two adjacent arcs under $i_0$ (which is $2\theta_0$) and under $i$ differ by at most $\frac{2\epsilon}{R}$. Since $d_0(x,y)>\frac{R}{128}$, an exercise in hyperbolic geometry gives the first estimation:
\begin{equation}\label{10}
\frac{1}{2}d(i_0(x),i_0(y))<d(i(x),i(y))<2d(i_0(x),i_0(y)).
\end{equation}
Actually, the constant $2$ can be replaced by $1+\epsilon'$ for some small positive constant $\epsilon'$.

\begin{center}
\psfrag{a}[]{$x$} \psfrag{b}[]{$y$} \psfrag{c}[]{$\gamma$}
\psfrag{d}[]{$\beta$} \psfrag{e}[]{$t$} \psfrag{f}[]{$t'$}
\psfrag{g}[]{$z$} \psfrag{h}[]{$w$}
\includegraphics[width=4.5in]{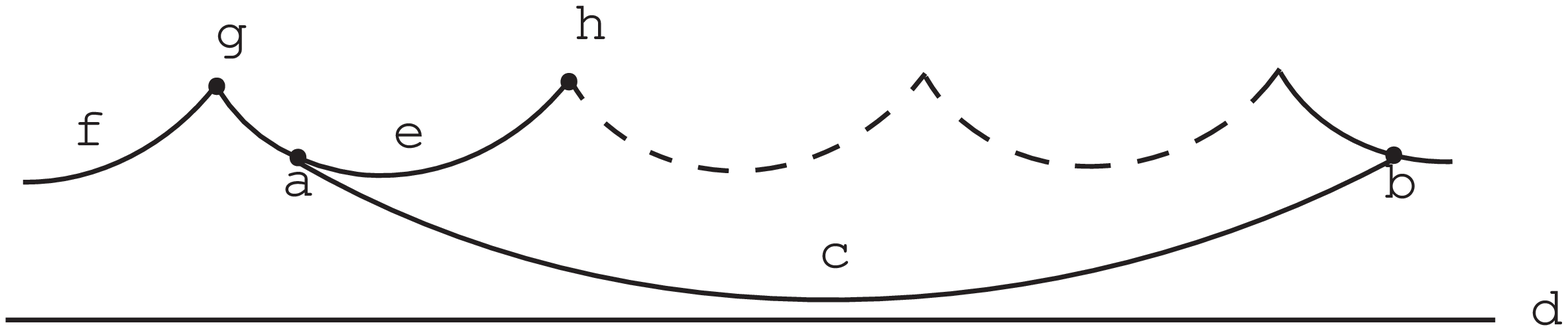}
 \centerline{Figure 8}
\end{center}

Let $x'$ and $y'$ be the nearest point projection of $i_0(x)$ and $i_0(y)$ on $i_0(\beta)$ respectively, and let $x''$ and $y''$ be the nearest point projection of $i(x)$ and $i(y)$ on $i(\beta)$ respectively. Since $\sin{\theta_0}=\sqrt{\frac{5-\sqrt{5}}{10}}$, $d(x',i_0(x)),d(y',i_0(y)),d(x'',i(x)),d(y'',i(y))$ are all less than $\cosh^{-1}{(\csc{\theta_0})}+\frac{10\epsilon}{R}<2$.

By the definition of $\theta$ and $\theta'$, $\theta=\angle{i_0(w)i_0(x)i_0(y)}$ and $\theta'=\angle{i(w)i(x)i(y)}$. Since $d(i_0(x),i_0(y)), d(i(x),i(y))>\frac{R}{256}$ and $d(i_0(y),y')$, $d(i(y),y'')<2$, we have $\angle{i_0(y)i_0(x)y'}$, $\angle{i(y)i(x)y''}<10e^{-\frac{R}{256}}$. Moreover, since $d(i_0(x),y')$, $d(i(x),y'')>\frac{R}{256}-2$, and $|d(i_0(x),x')-d(i(x),x'')|<100\sqrt{\frac{\epsilon}{R}}$ (Lemma \ref{base1}), $|\angle{x'i_0(x)y'}-\angle{x''i(x)y''}|<50(\frac{\epsilon}{R})^{\frac{1}{4}}$ holds.

Note that the fifth estimation in Lemma \ref{preestimation} implies $\Theta(P_{i(z)i(w)x''},P_{i(x)x''y''})<\frac{10\epsilon}{R}$, and the third estimation in Lemma \ref{base1} gives $|\angle{i_0(z)i_0(x)x'}-\angle{i(z)i(x)x''}|<100\sqrt{\frac{\epsilon}{R}}$. So
\begin{equation}\label{11}
\begin{split}
& |\theta-\theta'| =|\angle{i_0(w)i_0(x)i_0(y)}-\angle{i(w)i(x)i(y)}|\\
& \leq |\angle{i_0(w)i_0(x)y'}-\angle{i(w)i(x)y''}|+\angle{i_0(y)i_0(x)y'}+\angle{i(y)i(x)y''}\\
& \leq |\angle{i_0(z)i_0(x)y'}-\angle{i(z)i(x)y''}|+20e^{-\frac{R}{256}}\\
& \leq |\angle{i_0(z)i_0(x)x'}-\angle{i(z)i(x)x''}|+|\angle{x'i_0(x)y'}-\angle{x''i(x)y''}|+\Theta(P_{i(z)i(w)x''},P_{i(x)x''y''})+20e^{-\frac{R}{256}}\\
& \leq 100\sqrt{\frac{\epsilon}{R}}+50(\frac{\epsilon}{R})^{\frac{1}{4}}+\frac{10\epsilon}{R}+20e^{-\frac{R}{256}} < \frac{\delta}{2}.
\end{split}
\end{equation}

We will use $P_{i(t)i(t')}$ to denote the hyperbolic plane containing $i(t)$ and $i(t')$. Let $\vec{v}$ be the tangent vector of $\overline{i(x)i(y)}$ at $i(x)$, and $\vec{n}$ be the normal vector of $P_{i(t)i(t')}$ at $i(x)$, then $\phi'=\Theta(\vec{v},\vec{n})$, so we need to estimate $|\Theta(\vec{v},\vec{n})-\frac{\pi}{2}|$.

Let $\vec{v}'$ be the tangent vector of $\overline{i(x)y''}$ at $i(x)$, we have known that $\Theta(\vec{v}, \vec{v}')=\angle{i(y)i(x)y''}<10e^{-\frac{R}{256}}$. Let $z''$ be the nearest point projection of $i(z)$ on $i(\beta)$, then the fifth estimation of Lemma \ref{preestimation} implies
\begin{equation}\label{12}
\Theta(P_{i(z)z''i(w)},P_{i(z)z''y''}), \Theta(P_{i(z)z''y''},P_{i(x)z''y''})<10\frac{\epsilon}{R}.
\end{equation}
A similar estimation gives
\begin{equation}\label{13}
\Theta{(P_{i(t)i(t')},P_{i(z)z''y''})}<20\frac{\epsilon}{R}.
\end{equation}

So we have
\begin{equation}\label{14}
\begin{split}
& |\Theta(\vec{v},\vec{n})-\frac{\pi}{2}|\leq \Theta(\vec{v}, \vec{v}')+|\Theta(\vec{v}',\vec{n})-\frac{\pi}{2}| \leq 10e^{-\frac{R}{256}}+\Theta(P_{i(t)i(t')},P_{i(x)z''y''})\\
& \leq 10e^{-\frac{R}{256}}+\Theta(P_{i(t)i(t')},P_{i(z)z''i(w)})+\Theta(P_{i(z)z''i(w)},P_{i(x)z''y''})\\
& \leq 10e^{-\frac{R}{256}}+\Theta{(P_{i(t)i(t')},P_{i(z)z''y''})}+2\Theta(P_{i(z)z''i(w)},P_{i(z)z''y''})+\Theta(P_{i(z)z''y''},P_{i(x)z''y''})\\
& \leq 10e^{-\frac{R}{256}}+50\frac{\epsilon}{R}<\frac{\delta}{2}.
\end{split}
\end{equation}

So $|(\theta-\theta',\frac{\pi}{2}-\phi')|<\delta$.

{\bf Case II.} $\gamma$ does intersect with $\widetilde{S}$, then $d_0(x,y)>\frac{R}{2}$ by step III of the construction in Section \ref{construction2}. Let $\bar{x}$ be the intersection point of $\beta$ and $\gamma$, and let $\bar{y}$ be the other intersection point in $\partial \tilde{S} \cap \gamma$ which is close with $y$. Let $\alpha$ be the component of the preimage of a seam in $U$ that intersects with $\beta$, and is the closest such arc from $\bar{x}$. Then the picture near $x$ is as shown in Figure 9. Give $\beta$ an orientation which points to the left in Figure 9, and note that the orientation of $t$ also points to the left.

Let $m$ be the middle point of $\overline{i(\bar{x})i(\bar{y})}$ in $\mathbb{H}^3$. Since $d_0(\bar{x},\bar{y})\geq\frac{R}{2}$ and $i|_{\widetilde{S}\cap \widetilde{W}}:\widetilde{S}\cap \widetilde{W} \rightarrow \mathbb{H}^3$ is an $(1+K\frac{\epsilon}{R},1)$-quasi-isometric embedding, $d(i(\bar{x}),i(\bar{y}))>\frac{R}{3}$ holds. So $d(i(\bar{x}),m),d(i(\bar{y}),m)>\frac{R}{6}$.

\begin{center}
\psfrag{a}[]{$x$} \psfrag{b}[]{$\bar{x}$} \psfrag{c}[]{$\gamma$}
\psfrag{d}[]{$\beta$} \psfrag{e}[]{$\alpha$} \psfrag{f}[]{$z$}
\psfrag{g}[]{$w$} \psfrag{h}[]{$t'$} \psfrag{i}[]{$t$} \psfrag{j}[]{$u$}
\includegraphics[width=3.5in]{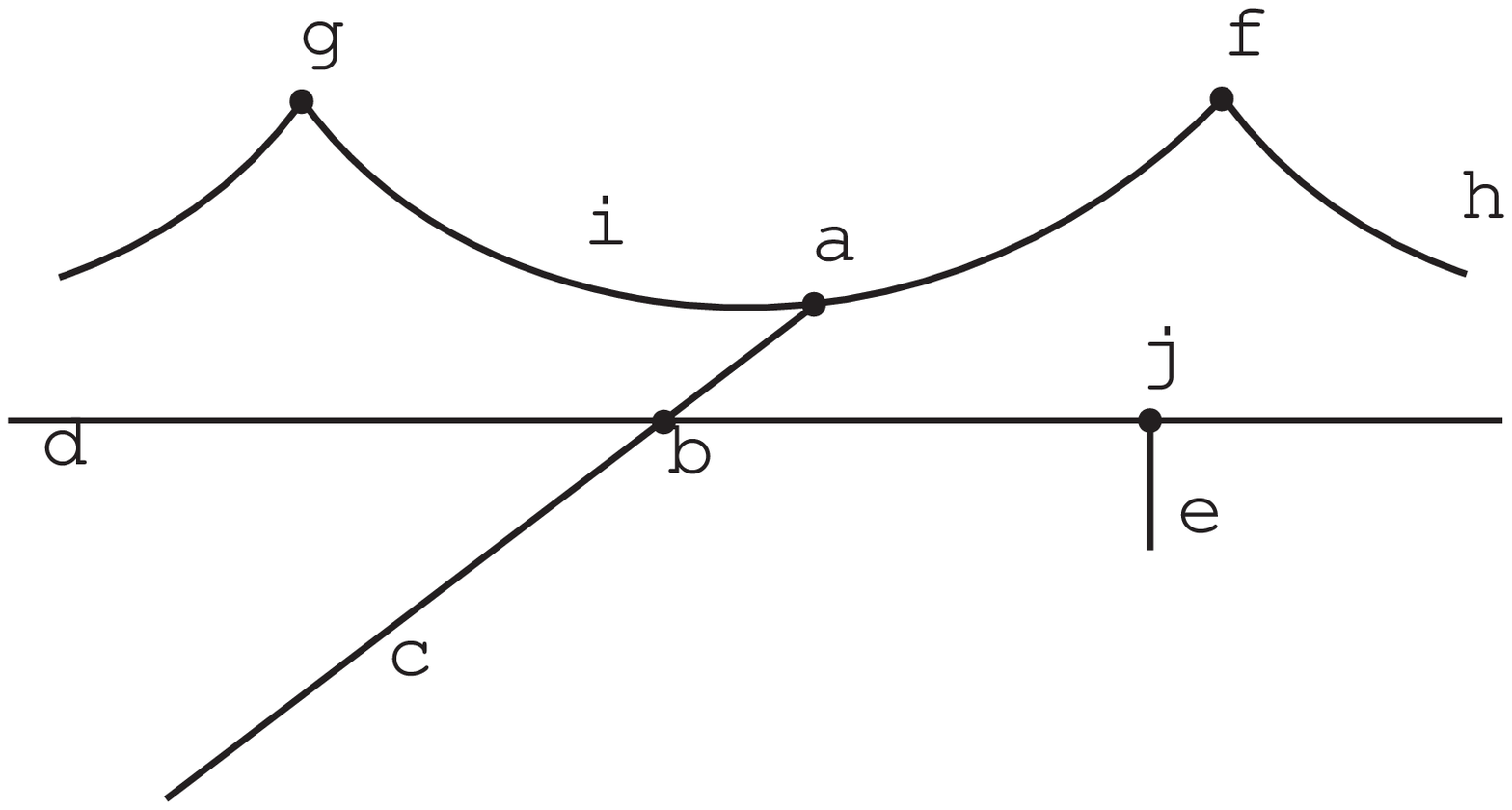}
 \centerline{Figure 9}
\end{center}

Now we make the following claim for $x$ and $\bar{x}$ , and the same estimations for $y$ and $\bar{y}$ also hold.
\begin{itemize}
\item $\frac{2}{3}d(m,i(x))<d_0(x,\bar{x})+\frac{1}{2}d_0(\bar{x},\bar{y})<\frac{3}{2}d(m,i(x))$.
\item $\angle{i(x)mi(\bar{x})}<\frac{\delta}{10}$.
\item $|\Theta(i_0(t),i_0(t'),\overline{i_0(x)i_0(y)})-\Theta(i(t),i(t'),\overline{i(x)m})|<\frac{\delta}{5}$.
\end{itemize}

This claim implies the statement of Proposition \ref{estimation} by the following argument. Since $\angle{i(x)mi(\bar{x})}<\frac{\delta}{10}$ and $\angle{i(y)mi(\bar{y})}<\frac{\delta}{10}$, $\angle{i(x)mi(y)}>\pi-\frac{\delta}{5}$ holds. By the first estimation in the claim,
\begin{equation}\label{15}
d(i(x),i(y))\leq d(i(x),m)+d(m,i(y))<\frac{3}{2}(d_0(x,\bar{x})+d_0(\bar{x},\bar{y})+d_0(y,\bar{y}))<2d_0(x,y),
\end{equation}
and
\begin{equation}\label{16}
d(i(x),i(y))\geq d(m,i(x))+d(m,i(y))-1 \geq \frac{2}{3}d_0(x,y)-1\geq \frac{1}{2}d_0(x,y).
\end{equation}

Moreover $\angle{i(x)mi(y)}>\pi-\frac{\delta}{5}$ implies $\angle{mi(x)i(y)}<\frac{\delta}{5}$. Then the third estimation in the claim implies $|\Theta(i_0(t),i_0(t'),\overline{i_0(x)i_0(y)})-\Theta(i(t),i(t'),\overline{i(x)i(y)})|<\delta$.

Now, we need only to prove the claim, and there are two subcases to consider: $d_0(x,\bar{x})<\frac{\delta}{1000}$ and $d_0(x,\bar{x})\geq\frac{\delta}{1000}$.

{\bf Subcase I.} $d_0(x,\bar{x})<\frac{\delta}{1000}$.

In this subcase, there might be a big difference between $\angle{i_0(w)i_0(x)i_0(\bar{x})}$ and $\angle{i(w)i(x)i(\bar{x})}$. However, since $d_0(x,\bar{x})<\frac{\delta}{1000}$ is very small, it will not affect the estimation very much.

By the first estimation of Proposition \ref{piece}, $d(m,i(\bar{x}))>\frac{R}{6}$ and $\frac{3}{4}d(m,i(\bar{x}))<\frac{1}{2}d_0(\bar{x},\bar{y})<\frac{4}{3}d(m,i(\bar{x}))$ hold. By the first and the second estimation of Proposition \ref{base1}, we also have $d(i(x),i(\bar{x}))<\frac{\delta}{250}$.

So
\begin{equation}\label{17}
\frac{2}{3}d(m,i(x))< \frac{2}{3}(d(m,i(\bar{x}))+d(i(\bar{x}),i(x)))< \frac{3}{4}d(m,i(\bar{x}))
 < \frac{1}{2}d_0(\bar{x},\bar{y})< d_0(x,\bar{x})+\frac{1}{2}d_0(\bar{x},\bar{y}),
\end{equation}
and
\begin{equation}\label{18}
d_0(x,\bar{x})+\frac{1}{2}d_0(\bar{x},\bar{y})<\frac{\delta}{1000}+\frac{4}{3}d(m,i(\bar{x}))
\leq \frac{\delta}{1000}+\frac{4}{3}(d(m,i(x))+d(i(x),i(\bar{x})))<\frac{3}{2}d(m,i(x))
\end{equation}
hold, thus the first estimation in the claim is true.

Moreover, since $d(i(x),i(\bar{x}))<\frac{\delta}{250}$ and $d(m,i(\bar{x}))>\frac{R}{6}$, $\angle{i(x)mi(\bar{x})}<\frac{\delta}{10}$ clearly holds.

Now it remains to show the third estimation in the claim.

Let $w''$ be the nearest point projection of $i(w)$ on $i(\beta)$. By the choice of the orientation of $t$, $d_0(w,x)\geq d_0(z,x)$ holds, so we have $d(i(w),i(x))\geq \frac{R}{4}$. Since $d(i(w),w'')\leq 2$, we have $\angle{i(w)i(x)w''}<10e^{-\frac{R}{4}}$. Moreover, since $d(w'',i(\bar{x}))\geq \frac{R}{4}-3$, $d(m,i(\bar{x}))\geq \frac{R}{6}$, and $d(i(x),i(\bar{x}))<\frac{\delta}{250}$, $\angle{i(x)w''i(\bar{x})},\angle{i(x)mi(\bar{x})}<\delta\cdot e^{-\frac{R}{6}}$ holds.

Let $\vec{v}_1$ be the tangent vector of $i(t)$ at $i(x)$, $\vec{v}_2$ be the tangent vector of $\overline{i(x)m}$ at $i(x)$, and $\vec{v}_3$ be the tangent vector of $\overline{i(x)w''}$ at $i(x)$. Let $\vec{u}_1$ be the tangent vector of $i(\beta)$ at $i(\bar{x})$ and $\vec{u}_2$ be the tangent vector of $\overline{i(\bar{x})m}$ at $i(\bar{x})$. For two points $p,q \in \mathbb{H}^3$ and $\vec{v}\in T_p\mathbb{H}^3$, we will use $\vec{v}@ q$ to denote the parallel transportation of $\vec{v}$ to $q$ along $\overline{pq}$.

Then
\begin{equation}\label{19}
\begin{split}
& |\angle{i(w)i(x)m}-\angle{w''i(\bar{x})m}|=|\Theta(\vec{v}_1,\vec{v}_2)-\Theta(\vec{u}_1,\vec{u}_2)|\\
 \leq &\Theta(\vec{v}_1,\vec{v}_3)+\Theta(\vec{v}_3,\vec{u}_1@ i(x))+\Theta(\vec{v}_2,\vec{u}_2@ i(x))\\
 \leq &\angle{i(w)i(x)w''}+\Theta(\vec{v}_3@ w'',\vec{u}_1@w'')+\Theta(\vec{u}_1@w'',\vec{u}_1@ i(x)@w'')\\
 & +\Theta(\vec{v}_2@m,\vec{u}_2@m)+\Theta(\vec{u}_2@m,\vec{u}_2@i(x)@m)\\
 \leq & \angle{i(w)i(x)w''}+\angle{i(x)w''i(\bar{x})}+d(i(x),i(\bar{x}))+\angle{i(x)mi(\bar{x})}+d(i(x),i(\bar{x}))\\
 \leq & 10e^{-\frac{R}{4}}+\delta\cdot e^{-\frac{R}{6}}+\frac{\delta}{250}+\delta\cdot e^{-\frac{R}{6}}+\frac{\delta}{250}< \frac{\delta}{100}.
\end{split}
\end{equation}
Here $\Theta(\vec{u}_1@w'',\vec{u}_1@i(x)@w'')<d(i(x),i(\bar{x}))$ holds by Proposition 4.1 of \cite{KM1}.

Let $w'$ be the nearest point projection of $i_0(w)$ on $i_0(\beta)$, then a similar (actually easier) argument implies
\begin{equation}\label{20}
|\angle{i_0(w)i_0(x)i_0(y)}-\angle{w'i_0(\bar{x})i_0(y)}|<\frac{\delta}{100}.
\end{equation}

Note that the first coordinate of $|\Theta(i_0(t),i_0(t'),\overline{i_0(x)i_0(y)})-\Theta(i(t),i(t'),\overline{i(x)m})|$ equals $|\angle{i_0(w)i_0(x)i_0(y)}-\angle{i(w)i(x)m}|$, while the second estimation of Proposition \ref{piece} implies $|\angle{w'i_0(\bar{x})i_0(\bar{y})}-\angle{w''i(\bar{x})i(\bar{y})}|<\frac{\delta}{100}$. So
\begin{equation}\label{21}
\begin{split}
& |\angle{i_0(w)i_0(x)i_0(y)}-\angle{i(w)i(x)m}|\\
\leq & |\angle{i_0(w)i_0(x)i_0(y)}-\angle{w'i_0(\bar{x})i_0(\bar{y})}|+|\angle{i(w)i(x)m}-\angle{w''i(\bar{x})m}|+|\angle{w'i_0(\bar{x})i_0(\bar{y})}-\angle{w''i(\bar{x})m}|\\
\leq &\frac{\delta}{100}+\frac{\delta}{100}+\frac{\delta}{100}<\frac{\delta}{10}.
\end{split}
\end{equation}

Let $m'$ be the nearest point projection of $m$ on $P_{i(t)i(t')}$, and $\bar{x}'$ be the nearest point projection of $i(\bar{x})$ on $P_{i(t)i(t')}$, then $d(\bar{x}',i(\bar{x}))\leq \frac{\delta}{250}$. Note that the second coordinate of $|\Theta(i_0(t),i_0(t'),\overline{i_0(x)i_0(y)})-\Theta(i(t),i(t'),\overline{i(x)m})|$ equals $\angle{mi(x)m'}$.

Let $\vec{n}$ be the normal vector of $P_{i(t)i(t')}$ at $i(x)$ and $\vec{n}'$ be the normal vector of $P_{i(x)i(\beta)}$ (hyperbolic plane containing $i(x)$ and $i(\beta)$) at $i(\bar{x})$ (with almost coincide orientation with $\vec{n}$). Then by the estimation of $\Theta(P_{i(t)i(t')},P_{i(x)i(\beta)})$ in equation \eqref{14}, we have
\begin{equation}\label{22}
\Theta(\vec{n}'@i(x),\vec{n})=\Theta(P_{i(t)i(t')},P_{i(x)i(\beta)})\leq 50\frac{\epsilon}{R}.
\end{equation}
So
\begin{equation}\label{23}
\angle{(\vec{n}'@\bar{x}',\vec{n}@\bar{x}')}\leq \angle{(\vec{n}'@\bar{x}',\vec{n}'@i(x)@\bar{x}')}+\angle{(\vec{n}'@i(x),\vec{n})}\leq \frac{\delta}{250}+50\frac{\epsilon}{R}\leq \frac{\delta}{200}.
\end{equation}

Then we have
\begin{equation}\label{24}
\begin{split}
\sinh{d(m,m')}& \leq \sinh{d(i(\bar{x}),\bar{x}')}\cosh{d(i(\bar{x}),m)}+\cosh{d(i(\bar{x}),\bar{x}')}\sinh{d(i(\bar{x}),m)}\sin{\angle{(\vec{n}'@\bar{x}',\vec{n}@\bar{x}')}}\\
& \leq \frac{\delta}{100}\cosh{d(i(\bar{x}),m)}+\frac{\delta}{100}\sinh{d(i(\bar{x}),m)}.
\end{split}
\end{equation}

This inequality implies
\begin{equation}\label{25}
\sin{\angle{mi(x)m'}} =\frac{\sinh{d(m,m')}}{\sinh{d(i(x),m)}} \leq \frac{\frac{\delta}{100}\cosh{d(i(\bar{x}),m)}+\frac{\delta}{100}\sinh{d(i(\bar{x}),m)}}{\sinh{(d(i(\bar{x}),m)-\frac{\delta}{250})}} \leq \frac{\delta}{20}.
\end{equation}
So $\angle{mi(x)m'}\leq \frac{\delta}{10}$, and we finish the proof in the first subcase.

{\bf Subcase II.} $d_0(x,\bar{x})\geq\frac{\delta}{1000}$.

By the first two estimations of Lemma \ref{base1} and $d_0(x,\bar{x})\geq\frac{\delta}{1000}$, we have that
\begin{equation}\label{26}
\frac{3}{4}d(i(x),i(\bar{x}))<d_0(x,\bar{x})<\frac{4}{3}d(i(x),i(\bar{x}))
\end{equation}
holds, while the first estimation of Theorem \ref{piece} implies
\begin{equation}\label{27}
\frac{3}{4}d(m,i(\bar{x}))<\frac{1}{2}d_0(x,y)<\frac{4}{3}d(m,i(\bar{x})).
\end{equation}

Now let us estimate $\angle{mi(\bar{x})i(x)}$. Let $x'$ be the nearest point projection of $i_0(x)$ on $i_0(\beta)$ and $x''$ be the nearest point projection of $i(x)$ on $i(\beta)$.

If $d(x',i_0(\bar{x}))\geq R$, then $d(i_0(x),i_0(\bar{x}))\geq R$, and $d(i(x),i(\bar{x}))\geq \frac{R}{2}$ hold. Since $d(i_0(x),x')$, $d_0(i(x),x'')<2$, we have $\angle{i_0(x)i_0(\bar{x})x'}, \angle{i(x)i(\bar{x})x''}\leq 10e^{-\frac{R}{2}}$. By the second estimation of Proposition \ref{piece}, $\angle{i_0(x)i_0(\bar{x})x'}\leq 10e^{-\frac{R}{2}}$ implies $\angle{mi(\bar{x})x''}\geq \pi-10e^{-\frac{R}{2}}-(\frac{\delta}{300})^2$. So $\angle{mi(\bar{x})i(x)}\geq \pi-20e^{-\frac{R}{2}}-(\frac{\delta}{300})^2 \geq \pi-\frac{\delta}{25}$.

If $d(x',i_0(\bar{x}))<R$, then $x'$ and $i_0(\bar{x})$ lie in the image of at most two adjacent $1$-cells in $\widetilde{Z}$, under the nearest point projection to $i_0(\beta)$. Then
the first two estimations in Lemma \ref{base1} implies
\begin{equation}\label{28}
|\Theta(i_0(\beta),i_0(\alpha),\overline{i_0(\bar{x})i_0(x)})-\Theta(i(\beta),i(\alpha),\overline{i(\bar{x})i(x)})|<10^4(\frac{\epsilon}{R})^{\frac{1}{4}}\delta^{-\frac{1}{2}}.
\end{equation}
The second estimation of Theorem \ref{piece} gives
\begin{equation}\label{29}
|\Theta(i_0(\beta),i_0(\alpha),\overline{i_0(\bar{x})i_0(\bar{y})})-\Theta(i(\beta),i(\alpha),\overline{i(\bar{x})m})|<(\frac{\delta}{300})^2.
\end{equation}
Then an elementary computation in spherical geometry gives
\begin{equation}\label{30}
\angle{mi(\bar{x})i(x)}\geq \pi-3\sqrt{(\frac{\delta}{300})^2}-3\sqrt{10^4(\frac{\epsilon}{R})^{\frac{1}{4}}\delta^{-\frac{1}{2}}}\geq \pi-\frac{\delta}{25}.
\end{equation}
So $\angle{mi(\bar{x})i(x)}\geq \pi-\frac{\delta}{25}$ always holds.

By \eqref{26}, \eqref{27} and \eqref{30}, we have:
\begin{equation}\label{31}
\begin{split}
& \frac{2}{3}d(m,i(x))<\frac{2}{3}(d(m,i(\bar{x})+d(i(\bar{x}),i(x)))<\frac{1}{2}d_0(x,y)+d_0(x,\bar{x})\\
& <\frac{4}{3}(d(m,i(\bar{x}))+d(i(x),i(\bar{x})))<\frac{4}{3}(d(m,i(x))+1)<\frac{3}{2}d(m,i(x)).
\end{split}
\end{equation}

Moreover, $\angle{mi(\bar{x})i(x)}\geq \pi-\frac{\delta}{25}$ implies $\angle{i(x)mi(\bar{x})}<\frac{\delta}{25}<\frac{\delta}{10}$ and $\angle{mi(x)i(\bar{x})}<\frac{\delta}{25}$.

The estimations in Lemma \ref{base1} also imply the following estimation (by considering the cases that $d_0(x',i_0(\bar{x}))\geq R$ and $d_0(x',i_0(\bar{x}))< R$)
\begin{equation}\label{32}
|\Theta(i_0(t),i_0(t'),\overline{i_0(x)i_0(\bar{x})})-\Theta(i(t),i(t'),\overline{i(x)i(\bar{x})})|<10^4(\frac{\epsilon}{R})^{\frac{1}{4}}\delta^{-1}<\frac{\delta}{25}.
\end{equation}
Then equation \eqref{32} and $\angle{mi(x)i(\bar{x})}<\frac{\delta}{25}$ together give the desired estimation
\begin{equation}\label{33}
|\Theta(i_0(t),i_0(t'),\overline{i_0(x)i_0(y)})-\Theta(i(t),i(t'),\overline{i(x)m})|<\frac{\delta}{5}.
\end{equation}
\end{proof}

Now we are ready to prove Theorem \ref{technical}, which finishes the proof of our main theorem (Theorem \ref{main}).

\begin{proof}
Choose constants $\hat{\epsilon}>0$ and $\hat{R}>0$ such that Proposition \ref{estimation} holds for $\delta=(\frac{\pi}{360})^2$.

For any two points $x,y\in \widetilde{Z}^{(0)}$, let $\gamma$ be the shortest path in $(\widetilde{Z},d_0)$ from $x$ to $y$, $\gamma'$ be the modified path of $\gamma$, and $y_1,y_2,\cdots,y_m$ be the corresponding modified sequence.

Note that since $x,y\in \widetilde{Z}^{(0)}$, $d_0(x,y_1),d_0(y_m,y)\geq \frac{R}{2}$ holds. Then $\gamma'$ is a concatenation of geodesic arcs $\gamma_0,\gamma_1,\cdots,\gamma_m$ in $(\widetilde{Z},d_0)$, which connect the sequence of points $y_0=x,y_1,y_2,\cdots,y_m,y_{m+1}=y$. Here each $\gamma_i$ lies in a piece of $\widetilde{Z}$, with length greater than $\frac{R}{128}$, for $i=0,1,\cdots,m$. Moreover, in the proof of Proposition \ref{quasi1}, we have shown that, for adjacent geodesic arcs $i_0(\gamma_i)$ and $i_0(\gamma_{i+1})$, the angle between them is greater than $\frac{\pi}{18}$.

In the first estimation of Proposition \ref{estimation}, we showed that $d(i(y_i),i(y_{i+1}))<2d_0(y_i,y_{i+1})$. While equation \eqref{8} in the proof of Proposition \ref{quasi1} implies that $\sum_{i=0}^m d(i_0(y_i),i_0(y_{i+1}))\leq \frac{10}{9}d(i_0(x),i_0(y))\leq \frac{10}{9}d_0(x,y)$. So \begin{equation}\label{34}
d(i(x),i(y))\leq \sum_{i=0}^m d(i(y_i),i(y_{i+1}))\leq 2\sum_{i=0}^m d_0(y_i,y_{i+1})\leq 3d_0(x,y).
\end{equation}

On the other hand, the second estimation in Proposition \ref{estimation} implies the following statement. Let $\vec{u}_1\in S^2$ that corresponds with the tangent vector of $i_0(\gamma_i)$ at $i_0(y_{i+1})$ (under frame $(\vec{e}_1,\vec{e}_2,\vec{e}_3)$ at $i_0(y_{i+1})$), and $\vec{u}_2\in S^2$ that corresponds with the tangent vector of $\overline{i(y_{i+1})i(y_i)}$ at $i(y_{i+1})$ (under frame $(\vec{e}_1',\vec{e}_2',\vec{e}_3')$ at $i(y_{i+1})$), then $\Theta(\vec{u}_1,\vec{u}_2)\leq \frac{\pi}{90}$. Since we have shown that the angle between $i_0(\gamma_i)$ and $i_0(\gamma_{i+1})$ is greater than $\frac{\pi}{18}$ in the Proof of Proposition \ref{quasi1}, the angle between $\overline{i(y_{i+1})i(y_i)}$ and $\overline{i(y_{i+1})i(y_{i+2})}$ is greater than $\frac{\pi}{36}$.

Since the length of $\overline{i(y_{i+1})i(y_i)}$ equals $d(i(y_i),i(y_{i+1}))$, which is greater than $\frac{R}{256}$, then Lemma 4.8 of \cite{LM} implies
\begin{equation}\label{35}
\begin{split}
& d(i(x),i(y))\geq \sum_{i=0}^m d(i(y_i),i(y_{i+1}))-2m(\log{(\csc{\frac{\pi}{72}})}+1)\\
& \geq \frac{1}{2}\sum_{i=0}^m d(i(y_i),i(y_{i+1}))\geq \frac{1}{4} \sum_{i=0}^m d_0(y_i,y_{i+1})\geq \frac{1}{4}d_0(x,y).
\end{split}
\end{equation}

So $i:(\widetilde{W},d_0|_{\widetilde{W}})\rightarrow (\mathbb{H}^3, d_{\mathbb{H}^3})$ is a quasi-isometric embedding.
\end{proof}

\bibliographystyle{amsalpha}

\end{document}